\numberwithin{equation}{section}
\theoremstyle{plain}
\newtheorem{theorem}{Theorem}
\numberwithin{theorem}{section}
\newtheorem{lemma}[theorem]{Lemma}                              
\newtheorem{proposition}[theorem]{Proposition}
\theoremstyle{definition}
\newtheorem{definition}[theorem]{Definition}
\newtheorem{notation}[theorem]{Notation}
\newtheorem{remark}[theorem]{Remark}
\newtheorem{assumption}[theorem]{Assumption}
\DeclareMathOperator*{\esssup}{ess\,sup}
\def \a {{\alpha}}
\def \b {{\beta}}
\def \d {{\delta}}
\def \G {{\Gamma}}
\def \R {{\mathbb {R}}}
\def \N {{\mathbb {N}}}
\def \x {{\xi}}
\def \e {{\varepsilon}}
\def \eps {{\varepsilon}}
\def \t {{\tau}}
\def \t {{\tau}}
\def \n {{\nu}}
\def \m {{\mu}}
\def \y {{\eta}}
\def \tt {{\bar t}}
\def \phi {{\varphi}}
\def \tilde {\widetilde}
\def\p{\partial}
\def \a {{\alpha}}
\def \b {{\beta}}
\def \d {{\delta}}
\def \G {{\Gamma}}
\def \rn  {{\mathbb {R}}^{N}}
\def \R  {{\mathbb {R}}}
\def \x {{\xi}}
\def \e {{\varepsilon}}
\def \eps {{\varepsilon}}
\def \t {{\tau}}
\def \n {{\nu}}
\def \m {{\mu}}
\def \y {{\eta}}
\def \p {{\partial}}
\def \a {{\alpha}}
\def \d {{\delta}}
\def \a {{\alpha}}
\def \b {{\beta}}
\def \d {{\delta}}
\def \G {\Ga}
\def \Ga {{\Gamma}}
\def \ddd {{\bar{d}}}
\def \SS {\mathcal{S}}
\def \R {{\mathbb {R}}}
\def \N {{\mathbb {N}}}
\def \x {{\xi}}
\def \e {{\varepsilon}}
\def \eps {{\varepsilon}}
\def \t {{\tau}}
\def \t {{\tau}}
\def \n {{\nu}}
\def \m {{\mu}}
\def \y {{\eta}}
\def \tt {{\bar t}}
\def \phi {{\varphi}}
\def \tilde {\widetilde}
\def \A {\mathcal{A}}
\def \rn {{\mathbb {R}}^{N}}
\def \à {{\`a }}
\def \è {{\`e }}
\def \ò {{\`o }}
\def \ù {{\`u }}
\newcommand\Ac{\mathscr{A}}
\newcommand\Lc{\A+Y}
\newcommand{\<}{\langle}
\renewcommand{\>}{\rangle}
\renewcommand{\(}{\left(}
\renewcommand{\)}{\right)}
\def \sol {p} 
\def \gg {\mathbf{G}} 
\def \param {\mathbf{P}}
\def \C {\mathcal{C}}
\def \TT {T_0} 
\def \rr {q} 
\def \T {T}
\def \xx {{\x}}
\def\section{\@startsection {section}{1}{\z@}{3.25ex plus 1ex minus
 .2ex}{1.5ex plus .2ex}{\large\bf}}
\def\subsection{\@startsection{subsection}{2}{\z@}{3.25ex plus 1ex minus
 .2ex}{1.5ex plus .2ex}{\normalsize\bf}}
\begin{document}

\title{
Optimal regularity for degenerate Kolmogorov equations with rough 
coefficients
}

\author{
Giacomo Lucertini
\thanks{Dipartimento di Matematica, Universit\`a di Bologna, Bologna, Italy.
\textbf{e-mail}: giacomo.lucertini3@unibo.it.}
\and Stefano Pagliarani
\thanks{Dipartimento di Matematica, Universit\`a di Bologna, Bologna, Italy.
\textbf{e-mail}: stefano.pagliarani9@unibo.it} \and Andrea Pascucci
\thanks{Dipartimento di Matematica, Universit\`a di Bologna, Bologna, Italy.
\textbf{e-mail}: andrea.pascucci@unibo.it}
}

\date{This version: \today}

\maketitle

\begin{abstract}
{We consider a class of degenerate equations satisfying a parabolic H\"ormander condition, with
coefficients that are measurable in time and H\"older continuous in the space variables. By
utilizing a generalized notion of strong solution, we establish the existence of a fundamental
solution and its optimal H\"older regularity, as well as Gaussian estimates. These results are key
to study the backward Kolmogorov equations associated to a class of Langevin-type diffusions.
}
\end{abstract}

\noindent \textbf{Keywords}: Kolmogorov equations, fundamental solution, H\"ormander condition, H\"older estimates, measurable coefficients, 
parametrix technique, anisotropic diffusion.

\vspace{2pt}

\noindent \textbf{MSC}: 35D99, 35B65, 60J60, 35K65.

%
%
\section{Introduction}
We study existence and optimal regularity properties of the fundamental solution to a Kolmogorov
operator that satisfies a parabolic H\"ormander-type condition. The coefficients of the operator
are H\"older continuous in the space variables but only measurable in time.

{Precisely, for fixed $d\le N$ and $\TT>0$, we consider the second order operator 
in non-divergence form
 $\Ac + Y$
with
\begin{equation}
\begin{split}\label{Lter}
 \Ac & = \frac{1}{2}\sum_{i,j=1}^{d} a_{ij}(t,x)\p_{x_i x_j}+ \sum_{i=1}^{d} a_{i}(t,x)\p_{x_i}
 +a(t,x),\qquad (t,x)\in \mathcal{S}_{\TT}:=]0,\TT[\times \R^N, \\
  Y  &   = \p_t + \langle B x,\nabla\rangle = \p_{t}  +  \sum_{i,j=1}^{N}b_{ij}x_{j}\p_{x_{i}}  , \qquad x\in \R^N,
\end{split}
\end{equation}
where $B$ is a constant matrix of dimension $N\times N$. Here, $\Ac$ is an elliptic operator on
$\R^d$ and $Y$ is a first order differential operator on $\R\times \R^N$, also called {\it
transport} or {\it drift term}. The focus of this paper is mainly on the case $d<N$, that is when
$\Ac + Y$ is fully degenerate, namely no coercivity condition on $\R^N$ is satisfied.} 

Motivations for the study of 
$\Ac + Y$ come from physics and  finance. In its most basic form, with $N=2$  and $d=1$,
\begin{equation}\label{ae2}
  \frac{1}{2}\p_{x_{1}x_{1}}+x_{1}\p_{x_{2}}+\p_{t}
\end{equation}
is the backward Kolmogorov operator of the system of stochastic equations
\begin{equation}\label{ae3}
  \begin{cases}
    dV_{t}=dW_{t}&\\
    dX_{t}=V_{t}dt&
  \end{cases}
\end{equation}
where $W$ is a real Brownian motion. In the classical Langevin model, $(V,X)$ describes the
velocity and position of a particle in the phase space and is the prototype of more general
kinetic models (cf. \cite{MR1972000}, \cite{MR4275241}, \cite{MR4229202}). In mathematical
finance, $(V,X)$ represents the log-price and average processes used in modeling path-dependent
financial derivatives, such as Asian options (cf. \cite{MR1830951}, \cite{MR3660883}). The study
of the fundamental solution and its regularity properties is a crucial step in tackling the
martingale problem for the corresponding stochastic equations, particularly for well-posedness and
pathwise uniqueness problems. These issues will be addressed in a future work.

\subsection{Main assumptions}\label{sec:assumptions}
Throughout the paper, $\Ac + Y$ verifies the following two structural
\begin{assumption}[\bf Coercivity on $\R^{d}$]\label{coer}
For $1\le i,j\le d$, the coefficients $a_{ij},a_{i},a$ are in
$L^\infty([0,\TT];C_{b}(\R^N))$, where $C_{b}(\R^N)$ denotes the space of bounded continuous functions on $\rn$. 
The diffusion matrix $\(a_{ij}\)_{i,j=1,\dots,d}$ is symmetric and
there exists a positive constant $\m$ such that 
\begin{equation}
 {\m^{-1}}{|\x|^2} \leq \sum_{i,j=1}^{d}a_{ij}(t,x)\x_i \x_j \leq \m |\x|^2,\qquad x\in\R^N,\
 \x\in\R^{d},
\end{equation}
for almost every $t\in[0,\TT]$.
\end{assumption}
\begin{assumption}[\bf H\"ormander condition]\label{ass:hypo}
The vector fields $\partial_{x_1},\dots,\partial_{x_{d}}$ and $Y$ satisfy
\begin{equation}\label{horcon}
\text{rank } \text{Lie}(\partial_{x_1},\dots,\partial_{x_{d}},Y)  = N+1.
\end{equation}
\end{assumption}
We refer to \eqref{horcon} as a {\it parabolic} H\"ormander condition since the drift term $Y$
plays a key role in the generation of the Lie algebra. Under Assumption \ref{ass:hypo}, the
{prototype} 
Kolmogorov operator 
\begin{equation}\label{eq:kolm_const}
 \frac{\d}{2}\sum_{i=1}^{d}\p_{x_{i}x_{i}} + Y 
\end{equation}
is hypoelliptic for any $\d>0$. Kolmogorov \cite{MR1503147} (see also \cite{MR222474}) constructed
the explicit Gaussian fundamental solution for \eqref{eq:kolm_const}, which is the transition
density of a linear stochastic differential equation.

Condition \eqref{horcon} is equivalent to the well-known Kalman rank condition for controllability
in linear systems theory (cf., for instance, \cite{MR2791231}). Also, it was shown in
\cite{lanpol} that \eqref{horcon} is equivalent to $B$ having the block-form
\begin{equation}\label{B}
  B=\begin{pmatrix}
 \ast & \ast & \cdots & \ast & \ast \\
 B_1 & \ast &\cdots& \ast & \ast \\
 0 & B_2 &\cdots& \ast& \ast \\
 \vdots & \vdots &\ddots& \vdots&\vdots \\
 0 & 0 &\cdots& B_{\rr}& \ast
  \end{pmatrix}
\end{equation}
where the $*$-blocks are arbitrary and $B_j$ is a $(d_{j-1}\times d_j)$-matrix of rank $d_j$ with
\begin{equation}
d\equiv d_{0}\geq d_1\geq\dots\geq d_{\rr}\geq1,\qquad \sum_{i=0}^{\rr} d_i=N.
\end{equation}
This allows to introduce natural definitions of anisotropic norm and H\"older continuity  on
$\R^{N}$.
\begin{definition}[\bf Anisotropic norm and H\"older spaces]\label{def:holder_coeff}
For any $x\in\rn$ let
\begin{equation}\label{e7}
 |x|_B:=
 \sum_{j=0}^{\rr}\, \sum_{i=\bar{d}_{j-1}+1}^{\bar{d}_j} |x_{i}|^{\frac{1}{2j+1}}, \qquad 
{\bar{d}_j:= \sum_{k=0}^j d_k}.
\end{equation}
For $\alpha\in\,]0,1]$ we denote by $C^{\alpha}_{B}(\rn)$ the set of functions $g\in C_{b}(\rn)$ such that 
\begin{equation}
 \| g\|_{C^{\alpha}_{B}(\rn)} :=\sup_{x\in\rn} |g(x)|+
\sup_{x,y\in\rn\atop x\neq y} \frac{|g(x) - g(y)| }{ |x - y|^{\alpha}_B } < \infty,
\end{equation}
and by $L^{\infty}([0,\TT];C^{\alpha}_{B}(\rn))$ 
the set of 
measurable functions $f:[0,\TT]
\longrightarrow C^{\alpha}_{B}(\rn)$ 
such that
\begin{equation}
 \| f \|_{L^{\infty}( [0,\TT] ;C^{\alpha}_{B}(\rn))} := \esssup_{t\in[0,\TT]} \| f(t) \|_{C^{\alpha}_{B}(\rn)} < \infty.
\end{equation}
\end{definition}
The quasi-norm \eqref{e7}, hereafter referred to as to \emph{intrinsic norm} for $\A+Y$, 
can be directly related to the scaling properties of the underlying
diffusion process (cf. \cite{MR2659772}, \cite{lanconelli2020local}). For example, 
the intrinsic norm for the Langevin operator \eqref{ae2} reads as
$|(v,x)|_B = |v| + |x|^{\frac{1}{3}}$ for $(v,x)\in\R^{2}$
and reflects the time-scaling properties of the stochastic system \eqref{ae3}, i.e. $(\Delta V )^2
\approx \Delta t$ and $(\Delta X)^2 \approx (\Delta t)^3$. 
The regularity of the coefficients of $\A$ is stated in our last standing
\begin{assumption}\label{ass:regularity}
The coefficients $a_{ij},a_i,a$ of $\A$ belong to $L^{\infty}([0,\TT];C^{\alpha}_{B}(\rn))$ for
some $\alpha\in\, ]0,1]$.
\end{assumption}
Throughout paper we suppose that Assumptions \ref{coer}, \ref{ass:hypo} and \ref{ass:regularity}
are fulfilled.

\subsection{Main results}\label{sec:main_results}
According to Assumption \ref{ass:regularity}, the coefficients of $\A$ are intrinsically H\"older
continuous in the space variables and merely measurable in the time component. For Kolmogorov
operators with coefficients that are H\"older continuous in both space and time, the study of the
existence of a fundamental solution goes back to the early papers \cite{weber}, \cite{ilin},
\cite{Sonin} and \cite{MR217739}. A modern and more natural approach based on the Lie group theory
was developed by \cite{pol}, \cite{difpas}, \cite{francesca2021fundamental} and \cite{pagliarani2021yosida}. Applications to the
martingale problem for some degenerate diffusion processes are given in \cite{menozzi} and
\cite{MR3758337}.

Major questions in the study of Kolmogorov equations are the very definition of solution and its
optimal regularity properties. It is well-known that, in general, the fundamental solution is not
regular enough to support the derivatives $\p_{x_{i}}$, for $d<i\le N$, appearing in the transport
term $Y$. Indeed, under the H\"ormander condition \eqref{horcon}, these derivatives are of order
three and higher in the intrinsic sense. For this reason, even for equations with H\"older
coefficients, weak notions of solution have been introduced. 
In this regard we may identify two main streams of research. In the semigroup approach initiated
by \cite{MR1475774}, solutions are defined in the {\it distributional} sense: in this framework,
solutions do not benefit from the time-smoothing effect that is typical of parabolic equations
(see, for instance, Theorem 4.3 in \cite{MR2534181}). On the other hand, in the stream of research
started by \cite{pol}, solutions in the {\it Lie} sense are defined by regarding $Y$ as a
directional derivative. In this approach regularity properties in space and time are strictly
intertwined: this allows to fully exploit the smoothing effect of the equation but makes the
analysis less suitable for applications to stochastic equations.

Recently, a third notion of solution, which is a cross between the two previous ones, 
has been proposed in \cite{MR4355925} with the aim of studying 
Langevin {\it stochastic PDEs} with rough coefficients. Since we are specifically interested in
operators whose coefficients are only measurable in time, it seems natural to adopt this latter
approach for our analysis. We introduce the following definition that is a particular case of
(1.3) in \cite{MR4355925} when $N=2$.
\begin{definition}[\bf Strong Lie solution]
\label{solint}
Let $0<T\leq \TT$ and $f\in L_{\text{\rm loc}}^{1}([0,T[;C_{b}(\rn))$. 
A solution to equation
\begin{equation}\label{ae1}
  \A u+ Yu = f\ \text{ on }\mathcal{S}_{T}
\end{equation}
is a 
continuous function $u$ such that {there exist} $\p_{x_{i}}u,\p_{x_i x_j}u\in L_{\text{\rm
loc}}^{1}([0,T[;C_{b}(\rn))$, for $i,j=1,\dots,d$, and
\begin{equation}\label{eq:integral_sol}
 u(s,e^{(s-t)B}x)= u(t,x)-\int_t^{s}\left(\A u(\t,e^{(\t-t)B}x)-f(\t,e^{(\t-t)B}x)\right)d\t,
 \qquad (t, x)\in \mathcal{S}_{T},\ s<T.
\end{equation}
\end{definition}
\begin{remark}
Notice that $s\mapsto(s,e^{(s-t)B}x)$ is the integral curve of $Y$ starting from $(t,x)$: for any
suitably regular function $u$ the limit
\begin{equation}\label{eq:limit_Lie_deriv}
 Yu(t,x):= \lim_{s\to t}\frac{u(s,e^{(s-t)B}x)-u(t,x)}{s-t}
\end{equation}
is the directional (or Lie) derivative along $Y$ of $u$ at $(t,x)$. 
Thus, if the integrand in \eqref{eq:integral_sol} is continuous then $u$ is a classical
(pointwise) solution of \eqref{ae1}. However, as noticed in Remark \ref{rem:almost_surely}, in
general a solution $u$ in the sense of Definition \ref{solint} is only a.e. differentiable along
$Y$ and equation \eqref{ae1} is satisfied for
almost every $(t,x)\in \mathcal{S}_{\T}$.
\end{remark}

In order to state our first main result, we give the following
\begin{definition}[\bf Fundamental solution]\label{fund}
A fundamental solution of $\A + Y$ is a function $\sol=\sol(t,x;\T,y)$ defined for $t<\T$ and $x,y\in\R^N$ such that, for any fixed $(\T,y)\in\mathcal{S}_{\TT}$, we have:
\begin{itemize}
\item[i)] $\sol(\cdot,\cdot;\T,y)$ is a solution of equation
$\A u+Y u=0$ on $\mathcal{S}_{\T}$ in the sense of Definition \ref{solint};
\item[ii)] for any $g\in C_{b}(\rn)$ we have
\begin{equation}\label{condfin}
\lim_{\substack{(t,x)\to(\T,y)\\ t<\T}} \int_{\R^N}\sol(t,x;\T,\y)g(\y)d\y=g(y).
\end{equation}
\end{itemize}
\end{definition}
The following result states the existence of the fundamental solution $p$ of $\A + Y$, as well as
uniform Gaussian bounds for $p$ and its derivatives with respect to the non-degenerate variables
$x_1,\dots,x_{d}$.
\begin{theorem}[\bf Existence and Gaussian bounds]\label{main}
Under Assumptions \ref{coer}, \ref{ass:hypo} and \ref{ass:regularity}, $\A+Y$
has a fundamental solution $\sol=\sol(t,x;\T,y)$ in the sense of Definition \ref{fund}. For every
$\e>0$ there exists a positive constant $C$, only dependent on $\TT,\m,B,\e,\a$ 
and the $\a$-H\"older norms of the coefficients, such that
\begin{align}\label{eq:gaussian_1}
 \sol(t,x;\T,y)&\leq C \G^{\m+\e}(t,x;\T,y),\\ \label{eq:gaussian_2}
 \left|\p_{x_i}\sol(t,x;\T,y)\right| &\leq \frac{C}{\sqrt{\T-t}} \G^{\m+\e}(t,x;\T,y),\\  \label{eq:gaussian_3}
 \left|\p_{x_i x_j}\sol(t,x;\T,y)\right| &\leq \frac{C}{\T-t} \G^{\m+\e}(t,x;\T,y),
\end{align}
for any $(T,y)\in\mathcal{S}_{\TT}$, $(t,x)\in\mathcal{S}_{T}$ 
and $i,j=1,\dots,d$, where $\G^\d$ is the Gaussian fundamental solution
of \eqref{eq:kolm_const}, whose explicitly expression is given in \eqref{gaussian}. Moreover,
there exist two positive constants $\bar{\m},\bar{c}$ such that
\begin{equation}\label{eq:gaussian_4}
  \bar{c}\, \G^{\bar{\m}}(t,x;\T,y)
 \leq \sol(t,x;\T,y),
\end{equation}
for any $(T,y)\in\mathcal{S}_{\TT}$ and $(t,x)\in\mathcal{S}_{T}$. 
\end{theorem}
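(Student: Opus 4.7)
The plan is to construct $p$ by a Levi parametrix method, adapted to the anisotropic structure of $\A+Y$ and to the time roughness of the coefficients. Fix $(T,y) \in \mathcal{S}_{\TT}$ and freeze the diffusion matrix at the terminal point, obtaining the constant-coefficient operator $\A_{(T,y)} + Y$ with $\A_{(T,y)} := \tfrac{1}{2}\sum_{i,j=1}^{d} a_{ij}(T,y)\,\partial_{x_i x_j}$. By Assumption \ref{ass:hypo} this operator is hypoelliptic and its Gaussian fundamental solution, call it $Z(\cdot,\cdot;T,y)$, is explicit and of the type \eqref{gaussian} with Gaussian constant controlled by $\mu$. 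The candidate fundamental solution is then sought in the parametrix ansatz
\begin{equation*}
p = Z + Z \otimes \Phi, \qquad (f \otimes g)(t,x;T,y) := \int_t^T\!\!\int_{\R^N} f(t,x;s,\xi)\, g(s,\xi;T,y)\, d\xi\, ds,
\end{equation*}
where $\Phi$ solves the Volterra-type equation $\Phi = K + K \otimes \Phi$ with residual $K := (\A+Y)Z$.

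The heart of the argument is the pointwise bound on $K$. Since $(\A_{(T,y)}+Y)Z=0$, only the variable part $(\A-\A_{(T,y)})Z$ survives, whose leading term carries the spatial H\"older increment $a_{ij}(t,x)-a_{ij}(T,y)$. By Assumption \ref{ass:regularity} this increment is controlled in the intrinsic norm $|\cdot|_B$, which paired against the Gaussian tail of $\partial_{x_i x_j}Z$ produces the cancellation
\begin{equation*}
|K(t,x;T,y)| \leq C\,(T-t)^{-1+\alpha/2}\,\Gamma^{\mu+\varepsilon/2}(t,x;T,y).
\end{equation*}
The singularity in $(T-t)$ is integrable, so iterating via the reproduction (semigroup) property of the family $\Gamma^{\delta}$ yields a uniformly convergent Neumann series $\Phi = \sum_{k\ge1} K^{\otimes k}$ satisfying a bound of the same form. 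Substituting gives the Gaussian upper bound \eqref{eq:gaussian_1}. The derivative estimates \eqref{eq:gaussian_2}--\eqref{eq:gaussian_3} follow by differentiating the parametrix factor $Z$ in $p = Z + Z \otimes \Phi$, splitting the convolution at time $(t+T)/2$ so that spatial derivatives fall on the well-behaved $Z$ on the near interval while $\Phi$ is controlled on the longer one.

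To show that $p$ is a strong Lie solution in the sense of Definition \ref{solint}, one cannot use pointwise differentiation along $Y$ because the coefficients are only measurable in time; the integral form \eqref{eq:integral_sol} is the natural substitute. I would derive it starting from the analogous identity for the smooth parametrix $Z$ along the characteristic $s \mapsto e^{(s-t)B}x$ and then apply Fubini to the convolution term $Z\otimes\Phi$, obtaining the absolute continuity in $s$ of $s \mapsto p(s,e^{(s-t)B}x;T,y)$ as a by-product. The terminal condition \eqref{condfin} reduces to the delta-convergence of the Gaussian $Z$ as $t\to T^{-}$, the correction $Z\otimes\Phi$ vanishing uniformly thanks to the $(T-t)^{\alpha/2}$ gain in the bound on $\Phi$.

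Finally, for the lower bound \eqref{eq:gaussian_4} I would first deduce a local Gaussian lower bound on an intrinsic diagonal neighbourhood from $p = Z\,(1 + O((T-t)^{\alpha/2}))$, valid for $(T-t)$ sufficiently small, and then propagate it globally by iterating the Chapman--Kolmogorov identity $p(t,x;T,y) = \int_{\R^N} p(t,x;s,z)\, p(s,z;T,y)\,dz$ along a chain of intermediate points whose existence is granted by the Kalman-rank form of \eqref{horcon}. The main obstacle I foresee lies not in the Gaussian estimates themselves --- the trick of freezing at $(T,y)$ confines the time roughness to bounded multiplicative factors inside $K$, so the classical parametrix machinery survives --- but rather in the rigorous verification of \eqref{eq:integral_sol} under merely $L^{\infty}$-in-time coefficients, which demands a careful exchange of limits along the integral curves of $Y$ and uniform integrability of the parametrix kernels against measurable time sections.
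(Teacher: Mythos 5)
Your overall parametrix strategy (Neumann series for $\Phi$, Chapman--Kolmogorov for the iterates, splitting the convolution for derivative bounds, Fubini to verify the Lie-integral identity, chaining for the lower bound) matches the paper's architecture. But the choice of parametrix is wrong, and this is fatal for precisely the obstacle you say you foresee: coefficients that are merely $L^\infty$ in time.

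You freeze the diffusion matrix at the fixed point $(T,y)$, so the parametrix operator is $\A_{(T,y)} = \tfrac12\sum a_{ij}(T,y)\partial_{x_ix_j}$, constant in both $t$ and $x$. The residual then carries the increment $a_{ij}(t,x)-a_{ij}(T,y)$, which you describe as a ``spatial H\"older increment'' controlled by Assumption \ref{ass:regularity}. It is not: it is a space--time increment, and Assumption \ref{ass:regularity} gives $\alpha$-H\"older control only in the space variable, uniformly in $t$. The time component $a_{ij}(t,\cdot)-a_{ij}(T,\cdot)$ is uncontrolled under mere measurability (take $a_{11}(t,x)=1+\tfrac12\mathds{1}_{\mathbb{Q}}(t)$). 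Since $\partial_{x_ix_j}Z$ carries a singularity of order $(T-t)^{-1}$, and the Gaussian tail only offsets the spatial part of the increment, $K=(\A-\A_{(T,y)})Z$ does \emph{not} gain the factor $(T-t)^{\alpha/2}$ you claim, the singularity is non-integrable, and the Neumann series does not converge. This is exactly the pathology the paper warns about in Remark \ref{rem:classical_parametrix} and the remark following Proposition \ref{proposition_serie}.

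The paper's fix is a \emph{time-dependent} parametrix: freeze the coefficients only in the spatial variable, along the integral curve of $Y$ through $(T,y)$, i.e.\ $\A^{(T,y)}=\tfrac12\sum a_{ij}\big(t,e^{(t-T)B}y\big)\partial_{x_ix_j}$ as in \eqref{Lfrozen}. Its Gaussian fundamental solution \eqref{eq:param} involves a genuinely time-dependent covariance $\C^{(T,y)}(t,T)$. The residual increment is then $\big|a_{ij}(t,x)-a_{ij}\big(t,e^{-(T-t)B}y\big)\big|$: purely spatial, at the \emph{same} time slice $t$, so Assumption \ref{ass:regularity} applies and the bound $\kappa\,|x-e^{-(T-t)B}y|^\alpha_B/(T-t)$ pairs with the Gaussian via \eqref{polipar} to produce the $(T-t)^{-1+\alpha/2}$ singularity. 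The price is that the parametrix is no longer an explicit Gaussian with constant covariance, so one needs the comparison lemmas of Appendix~\ref{appendix:estimates_parametrix} (Lemma \ref{asint1}, Lemma \ref{lemma:stimeH}, Proposition \ref{prop:gaussian_estimates}) to establish the requisite Gaussian bounds for $\Gamma^{(T,y)}$ and its derivatives. Freezing at the fixed point $y$ rather than along the integral curve $e^{(t-T)B}y$ would also fail: it yields $|x-y|_B^\alpha$ rather than $|x-e^{-(T-t)B}y|_B^\alpha$, which is not compensable by the Gaussian centered at $e^{-(T-t)B}y$.

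Your remaining steps -- absolute continuity along the $Y$-characteristic via Fubini, the $\delta$-convergence for the terminal condition, the split of the convolution for derivative bounds, and the chained Chapman--Kolmogorov argument for the lower bound -- are all consistent with the paper's Steps 1--3 in the proof of Theorem~\ref{main} and with Lemma~\ref{leib} and Proposition~\ref{lem:deriv_theta}. They would go through once the parametrix is replaced by the time-dependent one.
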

In Section \ref{sec:Cau} we present several results
for the Cauchy problem that are straightforward consequences of Theorem \ref{main}. The proof of Theorem \ref{main} 
is based on a modification of Levi's parametrix technique, which allows to deal with the lack of
regularity of the coefficients along the drift term $Y$. The main tool is the fundamental solution
of a Kolmogorov operator with time-dependent measurable coefficients, also recently studied in
\cite{brampol}. This approach allows for a careful analysis of the optimal regularity properties
of the fundamental solution
$p$: 
Theorem \ref{ta1} below states that $p$ belongs to the intrinsic H\"older space $C_{B}^{2,\a}$ as
given by Definition \ref{def:C2_alpha_space}. As the notation could be misleading, we explicitly
remark that for $u\in C_{B}^{2,\a}$ not even the first order derivatives $\p_{x_{i}}u$, for $i>d$,
necessarily exist. However, in general we cannot expect higher regularity properties for solutions
to \eqref{ae1} and
$C_{B}^{2,\a}$-regularity is indeed optimal.

\begin{theorem}[\bf 
Regularity {of the fundamental solution}]\label{ta1}
Under the assumptions of Theorem \ref{main}, $\sol(\cdot,\cdot;\T,y)\in
C_{B}^{2,{\beta}}(\mathcal{S}_{\t})$ for every $(T,y)\in \mathcal{S}_{\TT}$, $0<\t<\T$ and
$\b<\a$. Precisely,
there exists a positive constant $C$ only dependent on $\TT,\m,B,\beta,\a$ 
and the $\a$-H\"older norms of the coefficients,  such that
\begin{equation}\label{eq:estimte_C2alpha_sol_fond}
  \| p(\cdot,\cdot;\T,y) \|_{C_{B}^{2,\beta}(\mathcal{S}_{\t})} \leq \frac{C}{(T-\t)^{\frac{Q+{2+\b}}{2}}},   
\end{equation}
{where $Q$ is the so-called \it{homogeneous dimension} of $\R^N$ with respect to the quasi-norm
$|\cdot|_B$ defined by
\begin{equation}\label{ae11}
 Q=\sum_{i=0}^{\rr} (2i+1) d_i .
\end{equation}}
\end{theorem}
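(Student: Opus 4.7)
The plan is to upgrade the bounds produced by the Levi parametrix construction used to prove Theorem \ref{main} into intrinsic $C_B^{2,\b}$ estimates. Recall that this construction yields the representation
\begin{equation}
 \sol(t,x;\T,y) = Z(t,x;\T,y) + \int_t^{\T}\!\!\int_{\rn} Z(t,x;s,\x)\, \Phi(s,\x;\T,y)\, d\x\, ds,
\end{equation}
where $Z$ is the parametrix, i.e.\ the fundamental solution of the Kolmogorov operator obtained from $\A$ by freezing its coefficients in the space variables at $y$ while keeping their measurable dependence on time (the theory of \cite{brampol} provides Gaussian bounds on $Z$ and on its first two $x_i$-derivatives, $i\le d$), and $\Phi$ is the unique solution of the associated Volterra integral equation generated by the kernel $(\A - \A_y)Z$. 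The intrinsic $\a$-Hölder regularity of the coefficients of $\A$ feeds into the standard bound $|\Phi(t,x;\T,y)|\le C(\T-t)^{-1+\a/2}\G^{\m+\e}(t,x;\T,y)$, with an analogous Hölder estimate on $\Phi$ in the variable $x$.

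Given this, I would control $\|\sol(\cdot,\cdot;\T,y)\|_{C_B^{2,\b}(\mathcal{S}_\t)}$ by treating the parametrix term and the convolution term separately. For $Z$, direct differentiation of the explicit Gaussian-type kernel yields Hölder estimates in the anisotropic distance $|\cdot|_B$ that translate into the expected weighted pointwise bounds via the scaling built into $|\cdot|_B$. For $Z\otimes\Phi$, differentiating twice under the integral in $x_i,x_j$ (with $i,j\le d$) produces a singular factor $(s-t)^{-1}$ that, paired with the improved bound $(s-t)^{-1+\a/2}$ on $\Phi$, gives an integrable singularity after the classical parametrix trick of adding and subtracting $\p_{x_i x_j}Z(t,x;s,\x)\Phi(s,x;\T,y)$. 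The intrinsic $\b$-Hölder seminorm of $\p_{x_i x_j}\sol$ in $x$ is then extracted from a Hölder estimate of the form
\begin{equation}
 \bigl|\p_{x_i x_j}Z(t,x;s,\x) - \p_{x_i x_j}Z(t,x';s,\x)\bigr| \le \frac{C\,|x-x'|_B^{\b}}{(s-t)^{1+\b/2}}\,\bigl(\G^{\m+\e}(t,x;s,\x)+\G^{\m+\e}(t,x';s,\x)\bigr),
\end{equation}
combined with the corresponding spatial Hölder bound on $\Phi$. The strict inequality $\b<\a$ enters precisely here, to ensure that the time integral of $(s-t)^{-1-\b/2}(s-t)^{-1+\a/2}$ remains finite.

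Intrinsic regularity along the drift direction $Y$, which is the last ingredient of the $C_B^{2,\b}$ norm, is then recovered from the equation itself: once $\A\sol$ is known to lie in $C_B^{\b}$ uniformly on $\mathcal{S}_\t$, the identity $Y\sol = -\A\sol$ inherits the same modulus of continuity, and the integral identity \eqref{eq:integral_sol} converts this into the required regularity along the integral curves of $Y$. The singular weight $(T-\t)^{-(Q+2+\b)/2}$ in \eqref{eq:estimte_C2alpha_sol_fond} is then read off from the combined scaling: two $x$-derivatives contribute $(\T-t)^{-1}$, the $\b$-Hölder seminorm adds $(\T-t)^{-\b/2}$, and the $\rn$-integration against the anisotropic Gaussian in the convolution term yields $(\T-t)^{-Q/2}$ after a parabolic change of variables. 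I expect the main technical obstacle to be the sharp anisotropic Hölder estimate on $\p_{x_i x_j}Z$ in the distance $|\cdot|_B$: because $Z$ has only time-measurable coefficients, the frozen-operator arguments available in the smooth-in-time literature do not apply verbatim, and the parametrix analysis of \cite{brampol} has to be pushed through with some care to keep all constants uniform in the freezing point $y$.
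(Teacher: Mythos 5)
Your plan correctly identifies the architecture (parametrix representation $p = \param + \Phi$, separate treatment of the two pieces, potential estimates with the classical add-and-subtract trick to kill the $(s-t)^{-1}$ singularity, the $\b<\a$ margin giving integrability) and correctly traces the $(T-\t)^{-(Q+2+\b)/2}$ weight to the scaling of the anisotropic Gaussian. This all matches the paper.

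The real gap is your treatment of the $Y$-direction regularity. You propose to ``recover it from the equation'': since $Yp = -\A p$ and you have shown $\A p \in L^\infty C_B^\b$, you conclude the required regularity along the integral curves of $Y$. That argument does give the component $\|f_Y\|_{L^\infty([0,T];C_B^\b(\rn))}$ of the $C_B^{2,\b}$ norm, but it does not touch the other two $Y$-related pieces of that norm, namely $\|\p_{x_i}p\|_{C_Y^{1+\b}(\SS_\t)}$ and $\|\p_{x_ix_j}p\|_{C_Y^{\b}(\SS_\t)}$, $i,j\le d$. These require Hölder estimates on increments of the \emph{spatial derivatives} along the integral curves of $Y$, e.g.
\begin{equation}
\bigl|\p_{x_i}p(s,e^{(s-t)B}x;T,y)-\p_{x_i}p(t,x;T,y)\bigr|\lesssim (s-t)^{\frac{1+\b}{2}},
\end{equation}
and they do \emph{not} follow from the equation $Yp=-\A p$: knowing $Y p$ controls the Lie derivative of $p$, not that of $\p_{x_i}p$. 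Differentiating the equation would cost a third $x$-derivative of $p$ that is not available. The paper closes this gap with Lemma \ref{lem:deriv_param_sol}, which shows that $\p_i\param$ itself is a strong Lie solution of an auxiliary equation (with a commutator right-hand side $-\sum_j b_{ji}\p_j\param$), so that the increment along $Y$ is an integral of Gaussian-bounded quantities; this is then propagated to $\Phi$ in Proposition \ref{prop:regularity_Phi_1} by the same swapping-of-integrals technique used in Lemma \ref{leib}. Without some analogue of this commutator lemma, your argument does not control the $C_Y^{1+\b}$ and $C_Y^{\b}$ seminorms, so the proof is incomplete as stated.

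A secondary imprecision: the parametrix in this paper is not obtained by freezing the coefficients at the fixed point $y$ while keeping time dependence, but by evaluating them along the integral curve of $Y$ through $(T,y)$, i.e. $a_{ij}(t,e^{(t-T)B}y)$; this distinction (see Remark \ref{rem:classical_parametrix}) is exactly what makes the Hölder increment $|x-e^{-(T-t)B}y|_B^\a$ appear and control the time singularity, and is worth stating carefully.
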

{
Theorem \ref{ta1} refines the known results about the smoothness of the fundamental solution (cf.
\cite{MR1475774}, \cite{MR1751429}, \cite{difpol}) and exhibits its maximal regularity
properties.} To give the precise definition of the H\"older space $C_{B}^{2,\alpha}$ 
we first introduce the intrinsic H\"older regularity along the vector fields appearing in the
H\"ormander condition \eqref{horcon}. As it is standard in the framework of functional analysis on
homogeneous groups (cf. \cite{MR657581}), the idea is to weight the H\"older exponent in terms of
the formal degree of the vector fields, which is 
equal to $1$ for $\partial_{x_1},\dots,\partial_{x_d}$ and equal to $2$ for $Y$.

\begin{definition}\label{def:intrinsic_alpha_Holder2}
Let $\a\in\,]0,1]$, $\b\in\, ]0,2]$ and $T>0$. 
We denote respectively by $C_{d}^{\a}(\mathcal{S}_{T})$ and $C_{Y}^{\b}(\mathcal{S}_{T})$ the
{set of the functions $f:\mathcal{S}_{T} \to \R$} such that the following semi-norms are finite
\begin{align}\label{e12}
 \left\|f\right\|_{C_{d}^{\a}(\mathcal{S}_{T})} &:=
 \sum_{i=1}^{d}
 \sup_{(t,x)\in\mathcal{S}_{T}\atop h\in\R
 }\frac{\left|f (t,x +h \mathbf{e}_i )  -
 f(t,x)\right|}{|h|^{\a}}, \\
 \left\|f\right\|_{C_{Y}^{\b}(\mathcal{S}_{T})} &:=\sup_{t,s\in[0,T]\atop x\in\rn}\frac{\left|f(s,e^{(s-t) B}x) -
 f(t,x)\right|}{|t-s|^{\frac{\b}{2}}}.
\end{align}
Here $\mathbf{e}_i$ denotes the $i$-th element of the canonical basis of $\R^N$.
\end{definition}

Next we recall the intrinsic H\"older spaces of order 0 and 1 introduced in \cite{pagpaspig} and
\cite{MR3660883}.
{\begin{definition}\label{def:C_alpha_spaces} For $\a\in\,]0,1]$,
$C_{B}^{0,\a}(\mathcal{S}_T)$ and $C_{B}^{1,\a}(\mathcal{S}_T)$ denote, respectively, the {set of
the functions $f:\mathcal{S}_{T} \to \R$} such that the following semi-norms are finite
\begin{align}
  \|f\|_{C^{0,\a}_{B}(\mathcal{S}_T)}&:=\|f\|_{C^{\a}_{Y}(\mathcal{S}_T)}+\|f\|_{C^{\a}_{d}(\mathcal{S}_T)},\\
  \|{f}\|_{C^{1,\a}_{B}(\mathcal{S}_T)}&:=\|f\|_{C^{1+\a}_{Y}(\mathcal{S}_T)}+\sum\limits_{i=1}^{d} \|{\partial_{x_i}f}\|_{C_{B}^{0,\a}(\mathcal{S}_T)}\\
  &={\|f\|_{C^{1+\a}_{Y}(\mathcal{S}_T)}+\sum\limits_{i=1}^{d}  \big(   \|  \partial_{x_i}f  \|_{C^{\a}_{Y}(\mathcal{S}_T)}+\|  \partial_{x_i}f  \|_{C^{\a}_{d}(\mathcal{S}_T)}    \big)}  .
\end{align}
\end{definition}
}

\begin{remark}\label{rem:C0alpha_intrins}
It was shown in \cite{pagpaspig} that if $f\in C_{B}^{0,\a}(\mathcal{S}_T)$ then $f$ is $\a$-H\"older continuous w.r.t. the intrinsic norm $|(t,x)|=|t|^{\frac{1}{2}}+|x|_{B}$. 
In particular, $f$ enjoys some H\"older regularity also in the degenerate variables $x_{i}$ for $i>d$, namely 
\begin{equation}
\sup_{x,y\in\R^N 
} \frac{|f(t,x) - f(t,y)| }{ |x - y|^{\alpha}_B  } \leq C \|f\|_{C^{0,\a}_{B}(\mathcal{S}_T)},
\qquad t\in[0,T[,
\end{equation}
where $C$ is a positive constant that depends only on the matrix $B$.
\end{remark}
 \begin{remark}
In \cite{pagpaspig} it was also shown that if $f\in C_{B}^{1,\a}(\mathcal{S}_T)$ then the following
\emph{intrinsic Taylor formula} holds:
\begin{equation}
\Big| f(s, y) - f(t ,x) - \sum_{i=1}^d \partial_{x_i} f(t , x ) (y - e^{(s-t) B }x)_i \Big| \leq C
\big( |s-t|^{\frac{1}{2}}+|y - e^{(s-t) B }x|_{B}  \big)^{1+\alpha}, \qquad (t,x),(s,y)\in
\mathcal{S}_T.
\end{equation}
where $C$ is a positive constant that depends only on the matrix $B$. We stress that the Taylor
``polynomial" above only contains the first derivatives of $f$ w.r.t. the first $d$ components of
$x$.
\end{remark}
To cope with the lack of regularity of the coefficients in the time-direction, the definition of
$C_{B}^{2,\alpha}(\mathcal{S}_T)$ differs from the one given in
\cite{pagpaspig}, specifically with regards to the regularity along $Y$.
\begin{definition}\label{def:C2_alpha_space}
For $\a\in\,]0,1]$, 
$C_{B}^{2,\a}(\mathcal{S}_T)$ is 
{the set of functions $f:\mathcal{S}_{T} \to \R$} such that there exist
\begin{itemize}
  \item[i)] $\p_{x_{i}}f\in C_{B}^{1,\a}(\mathcal{S}_T)$ for $i=1,\dots, d$;
 \item[ii)] a function $f_{Y}\in L^{\infty}([0,T];C^{\alpha}_{B}(\rn))$,
 called {\it a.e.-Lie derivative of $f$}, such that
 \begin{equation}\label{ae4}
 f(s,e^{(s - t) B}x) = f(t,x) + \int_t^s  f_{Y}(\tau, e^{(\tau - t) B} x )d \tau, \qquad t,s\in[0,T].
\end{equation}
\end{itemize}
{Equivalently, $C_{B}^{2,\a}(\mathcal{S}_T)$ denotes 
the
{set of functions $f:\mathcal{S}_{T} \to \R$} such that the following semi-norm is finite }
\begin{align}
 \hspace{-30pt}\|{f}\|_{C^{2,\a}_{B}(\mathcal{S}_T)} & = \sum_{i=1}^{d}\|{\partial_{x_i}f}\|_{C_{B}^{1,\a}(\mathcal{S}_T)}+
  \| f_{Y} \|_{L^{\infty}([0,T];C^{\alpha}_{B}(\rn))} \\
  &  { =    \sum_{i=1}^{d}
  \|\partial_{x_i}f\|_{C^{1+\a}_{Y}(\mathcal{S}_T)}+\sum\limits_{i,j=1}^{d}  \big(   \|  \partial_{x_i x_j}f  \|_{C^{\a}_{Y}(\mathcal{S}_T)} + \|  \partial_{x_i  x_j}f  \|_{C^{\a}_{d}(\mathcal{S}_T)} \big)
  + \| f_{Y} \|_{L^{\infty}([0,T];C^{\alpha}_{B}(\rn))}         }. \label{eq:norm_C2alpha}
\end{align}
\end{definition}
{\begin{remark}\label{ra1} To have a quick comparison with the literature on the regularity for
Kolmogorov operators in \eqref{Lter}, we recall that:
\begin{itemize}
  \item[i)] the H\"older space $C^{2+\a}$ introduced in \cite{MR1475774} (and
  adopted in \cite{MR2136978}, \cite{MR2534181} to prove Schauder estimates), consists of functions
  $f$ that, together with their second order derivatives $\p_{x_{i}x_{j}}f$ in the non-degenerate directions $i,j=1,\dots,d$, are H\"older continuous w.r.t. the
  anisotropic norm \eqref{e7}. This notion is weaker than Definition \ref{def:C2_alpha_space} both
  in terms of the regularity of $\p_{x_{i}}f$ and, more importantly, in terms of the
  Lipschitz continuity of $f$ along $Y$ (cf. \eqref{ae4}) which reveals the regularizing effect of the associated evolution
  semigroup;

  \item[ii)] Definition \ref{def:C2_alpha_space} is similar in spirit to that proposed in \cite{MR1751429}, \cite{difpol} and
  \cite{pagpaspig} for the study of Kolmogorov operators with H\"older 
  coefficients: according to their definition if $f\in C^{2,\alpha}$ then $Yf$ exists and belongs $C^{0,\alpha}_B$. 
%
This is the regularity that the fundamental solution enjoys in case the coefficients of $\A$ are
H\"older continuous in both space and time. By contrast, if $f\in C_{B}^{2,\alpha}$ in the sense
of Definition \ref{def:C2_alpha_space} then $f$ is generally at most Lipschitz continuous along
$Y$: this is the optimal result one can prove without assuming further regularity of the
coefficients in the time variable other than measurability.
\end{itemize}
\end{remark}


\vspace{5pt} The rest of the paper is structured as follows. Section \ref{sec:parametrix} contains
the construction of the fundamental solution by means of the parametrix method: in particular,
Section \ref{sec:proof_theorem_existence} includes the proof of Theorem \ref{main}. In Section
\ref{regolaritasoluzione} we prove the regularity estimates of the fundamental solution, in
particular Theorem \ref{ta1}. In Section \ref{sec:Cau} we state some results for the Cauchy
problem for $\A+Y$. The appendices contain the Gaussian and potential estimates that are employed
in the proofs.

For reader's convenience, we recall that we shall always denote by $\mathcal{S}_{T}$ the strip
$]0,T[\times\rn$; also, in the following table we collect the notations used for the main
functional spaces:
\begin{center}
\begin{tabular}{|c|c|c|}
 \hline
 Notation & Functional space & Reference \\
 \hline
 $C^\a_B$ & Anisotropic H\"older spaces on $\rn$ & Def. \ref{def:holder_coeff}\\
 $C_{d}^{\a}, C_{Y}^{\a}$ & Lie H\"older spaces on $\mathcal{S}_{T}$ & Def. \ref{def:intrinsic_alpha_Holder2}\\
 $C_{B}^{k,\a},\ k=0,1,2$ & Intrinsic H\"older spaces on $\mathcal{S}_{T}$ & Def. \ref{def:C_alpha_spaces}, \ref{def:C2_alpha_space}\\
 \hline
\end{tabular}
\end{center}
}
\section{Parametrix construction}\label{sec:parametrix}
Let Assumptions \ref{coer}, \ref{ass:hypo} and \ref{ass:regularity} be satisfied. The first step
of the parametrix method is to set a kernel $\param=\param(t,x;T,y)$ that serves as proxy for the
fundamental solution, called \emph{parametrix}. We denote by $\Ac^{(s,v)}$ the operator obtained
by freezing the second-order coefficients of $\Ac$ along the integral curve of the vector field
$Y$ passing through $(s,v)\in\mathcal{S}_{\TT}$ and neglecting the lower order terms. Namely we
consider the operator
\begin{equation}\label{Lfrozen}
\Ac^{(s,v)}:= \frac{1}{2}\sum_{i,j=1}^{d} a_{ij}(t,e^{(t-s)B}v)\p_{x_i x_j},
\qquad (t,x) \in \mathcal{S}_{\TT}. 
\end{equation}
One can directly prove 
that the 
fundamental solution of
\begin{equation}\label{eq:L_frozen}
\Ac^{(s,v)} + Y,
\end{equation}
in the sense of Definition \ref{fund}, is given by
\begin{equation}\label{eq:param}
\G^{(s,v)}(t,x;T,y)=\gg(\C ^{(s,v)}(t,T),y-e^{(T-t)B}x),\qquad
(T,y)\in \mathcal{S}_{\TT},\ (t,x)\in \mathcal{S}_T,
\end{equation}
where
\begin{equation}
\gg(\mathcal{C},z):=
\frac{1}{\sqrt{(2\pi)^{N}\det \mathcal{C}}}e^{-\frac{1}{2}\<\mathcal{C}^{-1}z,z\>} 
\end{equation}
is the Gaussian kernel on $\rn$ and
\begin{align}\label{cao}
\C^{(s,v)}(t,T)&:= \int_{t}^{T} e^{(T-\t) B} A^{(s,{v})} (\t) e^{(T-\t) B^*}d\t , \\ \label{aao}
A^{(s,v)}(\t)&:=\begin{pmatrix} A_0(\t,e^{(\t-s)B}v)& 0 \\ 0 &
0\end{pmatrix},\qquad A_{0}=\(a_{ij}\)_{i,j=1,\dots,d}.
\end{align}
\begin{remark}
Clearly $\G^{(s,v)}(t,x;T,y)$ is of class $C^{\infty}$ as a function of $x$ and only absolutely
continuous along the integral curves of $Y$ as a function of $(t,x)$.
\end{remark}

\begin{remark}
In the particular case of $A_{0}\equiv \d I_{d}$ for some $\d>0$, where $I_{d}$ is the $(d\times
d)$-identity matrix,
the Kolmogorov operator $\Ac^{(s,v)} + Y$ reads as in \eqref{eq:kolm_const} and its fundamental
solution reduces to
\begin{equation}\label{gaussian}
\G^{\d}(t,x;T,y):=\gg(\d\mathcal{C}(T-t),y-e^{(T-t)B}x),
\end{equation}
with
\begin{equation}\label{eq:Ct}
\mathcal{C}(t)=
\int_{0}^{t}
e^{(t-\t) B} 
\begin{pmatrix} I_{d} & 0 \\
0 & 0\end{pmatrix} e^{(t-\t) B^*}d\t.
\end{equation}
\end{remark}
{Proceeding as in \cite{menozzi} and \cite{MR4355925},} we define the parametrix
function $\param(t,x;T,y)$ as
\begin{equation}\label{eq:def_parametrix}
 \param(t,x;T,y):=\G^{(T,y)}(t,x;T,y),\qquad  (T,y)\in \mathcal{S}_{\TT}, \ (t,x)\in \mathcal{S}_T,
\end{equation}
and we refer to it as to the \emph{time-dependent parametrix} in order to emphasize the fact that it is obtained by freezing only the space variable of the coefficients of $\Ac$.
\begin{remark}\label{rem:eq_param}
Since $\G^{(s,v)}$ is the fundamental solution of $\Ac^{(s,v)} + Y$, we have
\begin{equation}\label{eq:int_eq_frozen}
 (\A^{(T,y)}+Y) \param(\cdot,\cdot;T,y)   = 0 \quad \text{on } \mathcal{S}_T,
\end{equation}
in the sense of Definition \ref{solint}, for any $(T,y)\in\mathcal{S}_{\TT}$.
\end{remark}
\begin{remark}\label{rem:classical_parametrix}
In \cite{difpas}, where the variable coefficients of $\Ac$ are assumed intrinsically H\"older continuous in space and time, 
the parametrix is defined as the fundamental solution of the operator 
obtained by freezing the second order coefficients of $\Ac$ in both time and space
variables, i.e.
\begin{equation}\label{Lfrozen_bis}
 \frac{1}{2}\sum_{i,j=1}^{d} a_{ij}(s,v)\p_{x_i x_j} + Y.
\end{equation}
As we shall see below, the choice of freezing the coefficients only in the space variable, along
the integral curve of $Y$ as in \eqref{Lfrozen}, is necessary in order to deal with the lack of regularity in the time
variable.
\end{remark}
Once the parametrix function is defined, the parametrix construction prescribes that a fundamental
solution
of $\mathcal{A}+Y$ is sought in the form
\begin{equation}\label{FS}
\sol(t,x;T,y)=
\param(t,x;T,y)+
\int_t^T\int_{\R^N} \param(t,x;\t,\y)\phi(\t,\y;T,y)d\y d\t,
\end{equation}
where $\phi$ is an unknown function. We now perform some heuristic computations that will lead to
a fixed-point equation for $\phi$. Assuming that $\sol(t,x;T,y)$ in \eqref{FS} is a fundamental
solution of $\mathcal{A}+Y$, we obtain
\begin{equation}
0=(\mathcal{A}+Y) \sol(t,x;T,y)= (\mathcal{A}+Y)\param(t,x;T,y)+(\mathcal{A}+Y)\int_t^T\int_{\R^N}
\param(t,x;\t,\y)\phi(\t,\y;T,y) d\y d\t.
\end{equation}
Furthermore, by formally differentiating and employing $p(t,x;t, \cdot)= \delta_x$ we also have
\begin{equation}
\begin{aligned}
&(\mathcal{A}+Y)\int_t^T\int_{\R^N} \param(t,x;\t,\y)\phi(\t,\y;T,y) d\y d\t
=
\int_t^T\int_{\R^N} (\mathcal{A}+Y)\param(t,x;\t,\y)\phi(\t,\y;T,y) d\y d\t - \phi(t,x;T,y).
\end{aligned}
\end{equation}
Therefore, $\phi(t,x;T,y)$ must solve the Volterra integral equation
\begin{equation}\label{volt}
\phi(t,x;T,y)= (\mathcal{A}+Y)\param(t,x;T,y)+\int_t^T\int_{\R^N}
(\mathcal{A}+Y)\param(t,x;\t,\y)\phi(\t,\y;T,y) d\y d\t.
\end{equation}
Now, owing to Remark \ref{rem:eq_param}, equation \eqref{volt} can be written as
\begin{equation}\label{volt_bis}
\phi(t,x;T,y)= (\A-\A^{(T,y)}) \param(t,x;T,y)+\int_t^T\int_{\R^N} (\A-\A^{(\tau,\y)})
\param(t,x;\t,\y)\phi(\t,\y;T,y) d\y d\t,
\end{equation}
whose solution can be determined by an iterative procedure, which leads to the series representation
\begin{equation}\label{serie}
\phi(t,x;T,y)=\sum\limits_{k\geq1}
\phi_k(t,x;T,y)
\end{equation}
where 
\begin{equation} \label{termini}
\begin{cases}
\phi_1(t,x;T,y):= \(\A-\A^{(T,y)}\) \param(t,x;T,y),\\
\displaystyle
\phi_{k+1}(t,x;T,y):=
\int_t^T\int_{\R^N} (\A-\A^{(\tau,\y)}) \param(t,x;\t,\y) 
\phi_k(\t,\y;T,y) d\y d\t, \qquad k\in\N.
\end{cases}
\end{equation}
In order to make the previous arguments rigorous one has to prove that: 
\begin{itemize}
\item the series defined by \eqref{serie}-\eqref{termini} is uniformly convergent on $\mathcal{S}_T$. At this stage one also obtains a uniform upper bound and a H\"older estimate for $\phi$; 
\item $p$ defined by \eqref{FS} is actually a fundamental solution of $\mathcal{A}+Y$.
In this step one also establishes the Gaussian estimates on $\sol$ and its derivatives that appear
in Theorem \ref{main}.
\end{itemize}

\subsection{Convergence of the series and estimates on $\phi$}
\begin{proposition}\label{proposition_serie}
For every $(T,y)\in \mathcal{S}_{\TT}$ the series in \eqref{serie} converges uniformly in
$(t,x)\in\mathcal{S}_T$ and
the function $\phi=\phi(t,x;T,y)$ 
solves the integral equation \eqref{volt_bis} on $\mathcal{S}_T$. Furthermore, 
for every $\e>0$ and $0<\d<\a$, there exists a positive constant $C$, only dependent on $\TT, \m,
B, \d,\a, \e$ and the $\a$-H\"older norms of the coefficients, 
such that
\begin{align}
\left|\phi(t,x;T,y)\right|&\leq\frac{C}{(T-t)^{1-\frac{\a}{2}}}\G^{\m+\e}(t,x;T,y),\label{eq:stimephi}\\
\left|\phi(t,x;T,y)-\phi(t,v;T,y)\right|&\leq\frac{C|x-v|^{\a-\d}_B}{(T-t)^{1-\frac{\d}{2}}}\(\G^{\m+\e}(t,x;T,y)+\G^{\m+\e}(t,v;T,y)\),\label{eq:stimephihold}
\end{align}
for any $(T,y)\in\mathcal{S}_{\TT}$ and $(t,x),(t,v)\in\mathcal{S}_{T}$.
%
%
\end{proposition}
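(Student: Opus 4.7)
The plan is to execute a Levi-type parametrix iteration, adapted to the fact that $\param$ is frozen only in the space variable along an integral curve of $Y$. I would start with the leading term
\begin{equation}
\phi_{1}(t,x;T,y) = \bigl(\A - \A^{(T,y)}\bigr)\param(t,x;T,y),
\end{equation}
whose second-order part reduces to
\begin{equation}
\tfrac{1}{2}\sum_{i,j=1}^{d}\bigl(a_{ij}(t,x) - a_{ij}(t, e^{(t-T)B}y)\bigr)\p_{x_{i}x_{j}}\param(t,x;T,y).
\end{equation}
Combining Assumption \ref{ass:regularity} (the intrinsic $\a$-H\"older continuity of $a_{ij}$ in $x$) with the standard Gaussian bound $|\p_{x_{i}x_{j}}\param(t,x;T,y)|\leq C(T-t)^{-1}\G^{\m+\e/2}(t,x;T,y)$ from the appendix produces a factor $|x - e^{(t-T)B}y|_{B}^{\a}$, which is absorbed into the Gaussian at the price of slightly enlarging the variance and yields the gain $(T-t)^{\a/2}$. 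Hence $|\phi_{1}(t,x;T,y)|\leq C(T-t)^{-1+\a/2}\G^{\m+\e}(t,x;T,y)$. The first- and zeroth-order parts of $\A$ are even milder and are treated the same way.

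Next I would iterate. Assuming inductively
\begin{equation}
|\phi_{k}(t,x;T,y)| \leq M_{k}(T-t)^{-1+k\a/2}\G^{\m+\e}(t,x;T,y),
\end{equation}
I plug this into \eqref{termini}, estimate $(\A-\A^{(\t,\y)})\param(t,x;\t,\y)$ by the same mechanism as for $\phi_{1}$, and apply the Chapman--Kolmogorov-type convolution inequality
\begin{equation}
\int_{\R^{N}}\G^{\m+\e}(t,x;\t,\y)\,\G^{\m+\e}(\t,\y;T,y)\,d\y \leq C\,\G^{\m+\e}(t,x;T,y),
\end{equation}
together with a Beta integral in $\t$, to obtain $M_{k+1}\leq C\, M_{k}\,\mathrm{B}(\a/2,k\a/2)$. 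Telescoping yields $M_{k}\leq C^{k}\Gamma(\a/2)^{k}/\Gamma(k\a/2)$, which decays super-geometrically; the series \eqref{serie} therefore converges uniformly on every strip $]0,T-\y[\times\R^{N}$ with $\y>0$, and the limit $\phi$ satisfies \eqref{eq:stimephi}. That $\phi$ solves \eqref{volt_bis} is immediate by passing to the limit in the partial-sum identity encoded in \eqref{termini}, the uniform bound legitimising the exchange of sum and integral.

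For the H\"older estimate \eqref{eq:stimephihold} I would split on $|x-v|_{B}$. When $|x-v|_{B}^{2}\geq T-t$, the bound follows directly from \eqref{eq:stimephi} and the triangle inequality, since $|x-v|_{B}^{\a-\d}/(T-t)^{(\a-\d)/2}\geq 1$. When $|x-v|_{B}^{2} < T-t$, I would re-run the iteration while tracking the dependence on $|x-v|_{B}$: the H\"older continuity of $(\A-\A^{(\t,\y)})\param(\cdot,\cdot;\t,\y)$ in its first spatial argument combines an $\a$-H\"older contribution from the coefficients with an intrinsic Taylor bound on $\param$, and a standard interpolation costing $|x-v|_{B}^{\d}$ produces the claimed extra factor $|x-v|_{B}^{\a-\d}(T-t)^{-\d/2}$. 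The induction then proceeds with $\d$ in place of $\a$ in the Beta bookkeeping; summability of the series is preserved as long as $\d<\a$.

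The principal obstacle is that the proxy $\A^{(s,v)}$ retains coefficients $a_{ij}(\t,e^{(\t-s)B}v)$ that are only $L^{\infty}$ in $\t$, so one cannot differentiate $\param$ in the time variable nor invoke the classical Polidoro parametrix machinery that freezes the coefficients in both time and space. All spatial comparisons must instead be expressed relative to the $Y$-flow, so the natural control quantity throughout is $|x-e^{(t-T)B}y|_{B}$, whose compatibility with the Gaussian tail is the crucial technical ingredient; this forces exclusive reliance on the potential and Gaussian estimates for $\param$ with merely measurable time-dependence established in \cite{brampol} and collected in the appendices.
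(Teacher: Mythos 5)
Your proposal is correct and follows essentially the same route as the paper: you establish the base estimate $|\phi_1|\leq C(T-t)^{-1+\a/2}\G^{\m+\e}$ by combining the intrinsic H\"older continuity of $a_{ij}$ with the Gaussian bound on $\p_{x_ix_j}\param$ and absorbing $|x-e^{-(T-t)B}y|_B^{\a}$ into the Gaussian via the weighted estimate \eqref{polipar}, then iterate with the Chapman--Kolmogorov identity and Beta integrals to get the $\G_{\text{Euler}}^{n}(\a/2)/\G_{\text{Euler}}(n\a/2)$ decay, and finally handle \eqref{eq:stimephihold} by the usual split on $|x-v|_B^2 \lessgtr T-t$ combined with interpolation. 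The paper itself gives the details only up through the series estimate and refers the H\"older increment bound to a modification of a lemma in \cite{difpas}, so your level of detail there is comparable to the paper's.
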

To avoid repeating the arguments already used in \cite{difpas}, we limit ourself to highlighting the parts of the proof that differ significantly from the classical case.
\begin{proof}
We first prove that there exists a positive $\kappa$ 
such that
\begin{equation}\label{eq:prelim_estimate}
 | (\A-\A^{(T,y)} ) \param(t,x;T,y) |  \leq   \frac{
\kappa}{(T-t)^{1-\alpha/2}}\G^{\m+\e}(t,x;T,y), \qquad (T,y)\in\mathcal{S}_{\TT},\
(t,x)\in\mathcal{S}_{T}.
\end{equation}
Assume for simplicity that $a_i,a \equiv 0$, the general case being a straightforward extension. By definition \eqref{Lfrozen} we have
\begin{align}\label{eq:estimate_increment_coeff}
 |(\A-\A^{(T,y)}) \param(t,x;T,y)| & \leq \frac{1}{2}\sum_{i,j=1}^{d}
 | a_{ij}(t,x) - a_{ij}(t,e^{-(T-t)B}y)| \times | \p_{x_i x_j} \param(t,x;T,y)|\le
 \intertext{(by the H\"older regularity of $a_{ij}$ and the Gaussian estimate \eqref{derparam})}
 & \leq \kappa  \frac{| x - e^{-(T-t)B}y |^{\alpha}_{B}}{T-t}\,\G^{\m+\e/2}(t,x;T,y). \label{eq:ste101}
 \end{align}
The estimate {\eqref{polipar}} then yields \eqref{eq:prelim_estimate}.

For any $(T,y)\in\, \mathcal{S}_{\TT}$ and $(t,x)\in \mathcal{S}_T$,  \eqref{termini} and
\eqref{eq:prelim_estimate} imply
\begin{equation}
| \phi_1(t,x;T,y)  |  \leq  \frac{ \kappa }{(T-t)^{1-\alpha/2}}\,\G^{\m+\e}(t,x;T,y)
\end{equation}
and
\begin{align}
| \phi_{2}(t,x;T,y) | & \leq  \int_t^T\int_{\R^N} \left|\left(\A-\A^{(\tau,\y)}\right) \param(t,x;\t,\y)\right| 
\times | \phi_1(\t,\y;T,y)| d\y d\t \\ & \leq \kappa^2 \int_t^T  \frac{ 1}{(\tau-t)^{1-\alpha/2}}
\frac{ 1}{(T-\tau)^{1-\alpha/2}}   \int_{\R^N} \G^{\m+\e}(t,x;\t,\y)\G^{\m+\e}(\t,\y;T,y)  d\y
d\t= \intertext{(by the Chapman-Kolmogorov identity and solving the integral in $d\t$)} & =
\kappa^2\, \frac{ \G^2_{\text{Euler}}\left(\frac{\a}{2}\right) }{ (T-t)^{1 - \alpha
}\G_{\text{Euler}}( \alpha ) } \G^{\m+\e}(t,x;T,y).
\end{align}
Proceeding by induction, it is straightforward to verify that
\begin{equation}
 | \phi_{n}(t,x;T,y) |  \leq \kappa^{n}\frac{\G_{\text{Euler}}^{n}\left(\frac{\a}{2}\right)}{(T-t)^{1-\frac{\a n}{2}}\G_{\text{Euler}}
 (\frac{\a n}{2}) }\G^{\m+\e}(t,x;T,y), \qquad n\in \N.
\end{equation}
This proves the uniform convergence of the series on $\mathcal{S}_T$, which in turn implies that
$\phi$ satisfies \eqref{volt_bis}, as well as the estimate \eqref{eq:stimephi}.

The proof of \eqref{eq:stimephihold} is a technical modification of the arguments in \cite[Lemma
6.1]{difpas}, which is necessary to account for the different parametrix function. For sake of
brevity, we leave the details to the reader.
\end{proof}

\begin{remark}
The proof above is particularly informative to understand the choice of the parametrix function in
relation to the lack of regularity of the coefficients with respect to the time variable. In
particular, in passing from \eqref{eq:ste101} to \eqref{eq:prelim_estimate}, we take advantage of
the increment $| x - e^{-(T-t)B}y |_B^{\alpha}$ in order to recover the integrability of the
singularity in time. In the classical case, namely when the coefficient $a_{ij}$ is also H\"older
continuous in time, the parametrix function is obtained by freezing the variable coefficients in
both space and time (see Remark \ref{rem:classical_parametrix}). In
\eqref{eq:estimate_increment_coeff}, this choice leads to increments of type
 \begin{equation}
 \big| a_{ij}(t,x) - a_{ij}( T, y )  \big|,
\end{equation}
which is clearly not helpful if $a_{ij}$ does not exhibit any regularity in time.

Furthermore, note that the coefficients have to be frozen in the space variable along the integral
curve of $Y$: freezing the coefficients at a fixed point $y$ would yield an increment of type
 $ | x - y  |_B^{\alpha} $
in \eqref{eq:ste101}, which does not allow to employ the Gaussian estimates in {\eqref{polipar}}
to control the singularity.
\end{remark}

\subsection{Proof that $p$ is a fundamental solution {and Gaussian bounds}}\label{sec:proof_theorem_existence}
We now prove the first part of Theorem \ref{main}, concerning the existence of the fundamental
solution of $\Lc$. This is achieved by proving that the candidate solution $\sol=\sol(t,x;T,y)$
defined through \eqref{FS} satisfies points i) and ii) of Definition \ref{fund}. The innovative
part of the proof consists in showing point i), which is $\sol(\cdot,\cdot;T,y)$ solves the
equation
\begin{equation}\label{eq:Yu_eq_A}
\A u +Y  u=0\quad \text{on $\mathcal{S}_T$}
\end{equation}
in the integral sense of Definition \ref{solint}. Once more, we provide the details of the parts that significantly depart from the classical case.

For any $(T,y)\in \mathcal{S}_{\TT}$, let us rewrite $\sol(t,x;T,y)$ as
\begin{equation}
\sol(t,x;T,y) = \param(t,x;T,y) + \Phi(t,x;T,y), \qquad (t,x)\in \mathcal{S}_T,
\end{equation}
where we set
%
\begin{equation}\label{eq:def_Phi}
\Phi(t,x;T,y):=\int_t^T\int_{\R^N} \param(t,x;\t,\y)\phi(\t,\y;T,y)d\y d\t.
\end{equation}
The strategy of the proof is to first show that $\sol$ possesses the regularity required in order
to qualify as a fundamental solution, and then to check that it actually solves equation
\eqref{eq:Yu_eq_A}. As pointed out in Remark \ref{eq:int_eq_frozen}, the parametrix
$\param=\param(t,x;T,y)$ is an integral solution to \eqref{eq:int_eq_frozen}. In particular, it is
smooth in the variable $x$ and absolutely continuous along $Y$. As for $\Phi=\Phi(t,x;T,y)$, the
next result shows that it is twice differentiable w.r.t. $x_1,\dots,x_{d}$ and states some
Gaussian bounds on the derivatives.
\begin{proposition}\label{lem:deriv_theta}
For any $(T,y)\in\mathcal{S}_{\TT}$, $(t,x)\in \mathcal{S}_T$ and $i,j = 1,\dots,d$, there exist
\begin{align}
 \partial_{x_i } \Phi(t,x;T,y) & = \int_t^T\int_{\R^N} \partial_{x_i} \param(t,x;\t,\y)\phi(\t,\y;T,y)d\y d\t,\\
 \partial_{x_i x_j } \Phi(t,x;T,y) & = \int_t^T\int_{\R^N} \partial_{x_{i}x_{j}} \param(t,x;\t,\y)\phi(\t,\y;T,y)d\y d\t,
\end{align}
and, for any $\eps>0$ we have
\begin{align}
\left|{\Phi}(t,x;T,y)\right| &\leq C (T-t)^{\frac{\alpha}{2}} \G^{\m+\e}(t,x;T,y), \\
\left|\p_{x_i}{\Phi}(t,x;T,y)\right| &\leq \frac{C}{(T-t)^{\frac{1-\alpha}{2}}}
\G^{\m+\e}(t,x;T,y),
\\ \left|\p_{x_i x_j}{\Phi}(t,x;T,y)\right| &\leq \frac{C}{(T-t)^{\frac{2-\alpha}{2}}} \G^{\m+\e}(t,x;T,y),
\end{align}
where $C$ denotes a positive constant, only dependent on $\TT, \m, B,\a, \e$ and the $\a$-H\"older
norms of the coefficients.
\end{proposition}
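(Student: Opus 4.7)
The plan is to estimate each quantity by combining the bounds on $\phi$ from Proposition \ref{proposition_serie} with the Gaussian estimates on $\param$ and its $x$-derivatives (from the appendix), and to justify differentiation under the integral via dominated convergence. For the pointwise bound on $|\Phi|$, the estimates $|\param(t,x;\tau,\eta)|\le C\,\G^{\m+\e/2}(t,x;\tau,\eta)$ and \eqref{eq:stimephi} give an integrand bounded by $C(T-\tau)^{-1+\alpha/2}\,\G^{\m+\e/2}(t,x;\tau,\eta)\,\G^{\m+\e/2}(\tau,\eta;T,y)$; the Chapman--Kolmogorov identity for Gaussians collapses the $\eta$-integral, after which the $\tau$-integral is of Beta type and produces the factor $(T-t)^{\alpha/2}$.

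For $\p_{x_i}\Phi$, the Gaussian estimate $|\p_{x_i}\param(t,x;\tau,\eta)|\le\frac{C}{\sqrt{\tau-t}}\,\G^{\m+\e/2}(t,x;\tau,\eta)$ paired with \eqref{eq:stimephi} gives an integrand with integrable singularities at both $\tau=t$ and $\tau=T$; dominated convergence legitimates moving $\p_{x_i}$ inside the integral, and the same Chapman--Kolmogorov plus Beta-function computation yields the bound $\frac{C}{(T-t)^{(1-\alpha)/2}}\G^{\m+\e}(t,x;T,y)$.

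The main obstacle is the second derivative, where the naive estimate $|\p_{x_i x_j}\param|\le\frac{C}{\tau-t}\,\G^{\m+\e/2}$ produces a non-integrable singularity at $\tau=t$. I would circumvent this by the classical splitting
\begin{equation}
\phi(\tau,\eta;T,y)=\bigl[\phi(\tau,\eta;T,y)-\phi(\tau,e^{(\tau-t)B}x;T,y)\bigr]+\phi(\tau,e^{(\tau-t)B}x;T,y).
\end{equation}
For the first summand, the H\"older estimate \eqref{eq:stimephihold} supplies an extra factor $|\eta-e^{(\tau-t)B}x|_B^{\alpha-\delta}$ which, absorbed into the Gaussian via a polynomial-potential bound of the type \eqref{polipar}, produces an additional $(\tau-t)^{(\alpha-\delta)/2}$; the $\tau$-integrand is then of the integrable form $(\tau-t)^{-1+(\alpha-\delta)/2}(T-\tau)^{-1+\delta/2}$ and a Beta integral yields the claimed bound. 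For the $\eta$-independent second summand, one must control $\int_{\R^N}\p_{x_i x_j}\param(t,x;\tau,\eta)\,d\eta$: replacing $\param$ by $\G^{(\tau,e^{(\tau-t)B}x)}$ annihilates the integral (its covariance is $\eta$-independent and the kernel integrates to one over $\eta$), while the residual $\p_{x_i x_j}[\param-\G^{(\tau,e^{(\tau-t)B}x)}]$ involves the difference between the covariances $\mathcal{C}^{(\tau,\eta)}$ and $\mathcal{C}^{(\tau,e^{(\tau-t)B}x)}$, controlled by the intrinsic H\"older regularity of $A_0$, which again yields an $|\eta-e^{(\tau-t)B}x|_B^{\alpha}$ factor and hence an integrable $\tau$-singularity. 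Differentiation under the integral is then justified by applying the same splitting to the difference quotients and invoking dominated convergence with the bounds just derived.
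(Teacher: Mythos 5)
Your plan matches the paper's: the statement is delegated to the potential estimates of Proposition~\ref{lem:stime_pot}, whose proof for second-order derivatives carries out exactly the decomposition you describe --- split $\phi(\tau,\eta;T,y)$ into $\bigl[\phi(\tau,\eta;T,y)-\phi(\tau,e^{(\tau-t)B}x;T,y)\bigr]+\phi(\tau,e^{(\tau-t)B}x;T,y)$, absorb the H\"older increment of $\phi$ into the Gaussian via \eqref{polipar}, and exploit $\int_{\R^N}\p_{x_i x_j}\G^{(\tau,v)}(t,x;\tau,\eta)\,d\eta=0$ together with the $\alpha$-H\"older regularity of the frozen covariance.

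However, one step is glossed over and would cause your uniform Beta-integral argument to fail as written. Applying \eqref{eq:stimephihold} leaves a sum of two Gaussians $\G^{\m+\e}(\tau,\eta;T,y)+\G^{\m+\e}(\tau,e^{(\tau-t)B}x;T,y)$: the first collapses under Chapman--Kolmogorov, but the second is $\eta$-independent and must be converted into $\G^{\m+\e}(t,x;T,y)$, which costs a factor of order $\bigl(\tfrac{T-t}{T-\tau}\bigr)^{Q/2}$; this blows up as $\tau\to T$ and would destroy integrability of the $\tau$-integral. The paper therefore splits the integration range at $\bar t=(t+T)/2$: on $]\bar t,T[$ the crude bound $|\p_{x_ix_j}\param(t,x;\tau,\eta)|\le C(\tau-t)^{-1}\G^{\m+\e}(t,x;\tau,\eta)$ is no longer singular in $\tau-t$ and the direct estimate suffices, while on $]t,\bar t]$ the delicate three-term decomposition is applied and the chain $\tau-t\le(T-t)/2\le T-\tau$ keeps the Gaussian conversion factor uniformly bounded. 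The same remark applies to the $\eta$-independent second summand, where $\phi(\tau,e^{(\tau-t)B}x;T,y)$ carries the same problematic Gaussian. With this dichotomy added, your argument closes.
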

\begin{proof}
{By the definition of $\Phi$ in \eqref{eq:def_Phi} we have
\begin{equation}
\Phi(t,x;T,y) =  \int_t^T J(t,x;\t;T,y) d\t  ,
\end{equation}
with $J$ defined as in \eqref{eq:def_J}. The potential estimates {of Proposition \ref{lem:stime_pot}} 
upon integrating in $\tau$, yield the result.}
\end{proof}
The following result shows that $\Phi(\cdot,\cdot;T,y)$ is also Lipschitz continuous along the
integral curves of $Y$.
\begin{lemma}\label{leib}
For every $(T,y)\in \mathcal{S}_{\TT}$ and $(t,x)\in \mathcal{S}_T$, we have
\begin{equation}\label{leibeq}
\Phi\big(s,e^{(s-t)B}x;T,y\big)-\Phi(t,x;T,y)=-\int_t^s F(\t,x;T,y)d\t, \qquad s\in[t,T[,
\end{equation}
where
\begin{equation}\label{eq:def_F}
F(\t,x;T,y):= \int_\t^T\int_{\R^N} \A^{(r,\y)}\param\big(\t,e^{(\t-t)B}x;r,\y\big)\phi(r,\y;T,y)d\y dr + \phi\big(\t,e^{(\t-t)B}x;T,y\big).
\end{equation}
\end{lemma}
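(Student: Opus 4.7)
\medskip

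\noindent\textbf{Proof proposal.} The plan is to compute the left-hand side of \eqref{leibeq} directly by splitting the increment of $\Phi$ into two natural pieces, each of which is handled through the integrated equation for $\param$ along the flow of $Y$. Writing $x_r := e^{(r-t)B}x$, we split
\begin{align}
  \Phi(s,x_s;T,y) - \Phi(t,x;T,y)
  &= \int_s^T\!\!\int_{\R^N}\!\bigl[\param(s,x_s;\sigma,\eta)-\param(t,x;\sigma,\eta)\bigr]\phi(\sigma,\eta;T,y)\,d\eta\,d\sigma \\
  & \qquad - \int_t^s\!\!\int_{\R^N}\!\param(t,x;\sigma,\eta)\phi(\sigma,\eta;T,y)\,d\eta\,d\sigma =: I_1 + I_2.
\end{align}

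For $I_1$, since $\sigma>s$, the function $\param(\cdot,\cdot;\sigma,\eta)$ belongs to the strip $\Sc_\sigma$ and, by Remark \ref{rem:eq_param}, satisfies $(\A^{(\sigma,\eta)}+Y)\param=0$ in the integral sense of Definition \ref{solint}. I would apply this integrated equation between $t$ and $s$ to rewrite the bracket as $-\int_t^s \A^{(\sigma,\eta)}\param(\tau,x_\tau;\sigma,\eta)d\tau$, and then use Fubini (justified by the Gaussian bounds of Proposition \ref{proposition_serie} and the kernel estimates in the appendix) to reorganize $I_1$ as an integral $\int_t^s d\tau$ of triple integrals with $r=\sigma$ ranging over $[s,T]$.

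For $I_2$, I would apply the same integrated equation for $\param(\cdot,\cdot;\sigma,\eta)$, this time on the interval $[t,r]$ with $r<\sigma$, and integrate against $\phi(\sigma,\eta;T,y)d\eta$ to obtain
\begin{equation}
  \int_{\R^N}\!\param(r,x_r;\sigma,\eta)\phi(\sigma,\eta;T,y)d\eta = \int_{\R^N}\!\param(t,x;\sigma,\eta)\phi(\sigma,\eta;T,y)d\eta - \int_t^r\!\!\int_{\R^N}\!\A^{(\sigma,\eta)}\param(\tau,x_\tau;\sigma,\eta)\phi(\sigma,\eta;T,y)d\eta\,d\tau.
\end{equation}
Then I would pass to the limit $r\to\sigma^-$: the left-hand side converges to $\phi(\sigma,x_\sigma;T,y)$ by the initial-value property ii) of the fundamental solution $\G^{(\sigma,\eta)}$ (together with the continuity and Gaussian bound of $\phi(\sigma,\cdot;T,y)$ granted by Proposition \ref{proposition_serie}), while the time integral on the right extends to $[t,\sigma]$ by dominated convergence. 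Solving for the first term, integrating in $\sigma\in[t,s]$, and swapping the order of integration yields
\begin{equation}
  I_2 = -\int_t^s \phi(\tau,x_\tau;T,y)d\tau - \int_t^s\!\int_\tau^s\!\!\int_{\R^N}\!\A^{(r,\eta)}\param(\tau,x_\tau;r,\eta)\phi(r,\eta;T,y)d\eta\,dr\,d\tau.
\end{equation}

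Finally, summing $I_1+I_2$, the two triple integrals over $r\in[s,T]$ and $r\in[\tau,s]$ merge into a single one over $r\in[\tau,T]$, which together with the remaining $-\int_t^s\phi(\tau,x_\tau;T,y)d\tau$ reproduces exactly $-\int_t^s F(\tau,x;T,y)d\tau$. The main technical obstacle is the delta-convergence step for $I_2$: one must pass to the limit $r\to\sigma^-$ in an integral against the only-Hölder-regular density $\phi(\sigma,\cdot;T,y)$, and then exchange this limit with the $\sigma$-integration. This is where the uniform Gaussian estimate \eqref{eq:stimephi} and the Hölder bound \eqref{eq:stimephihold} are essential, both to apply dominated convergence and to validate the several uses of Fubini's theorem on the singular time-space integrals.
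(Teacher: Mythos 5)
Your proof is correct and follows essentially the same strategy as the paper: the same decomposition of the increment of $\Phi$ into the parts over $[s,T]$ and $[t,s]$, the same use of Remark \ref{rem:eq_param} to turn parametrix increments into time integrals of $\A^{(\cdot,\cdot)}\param$, the same recovery of the boundary term $\int_t^s\phi(\tau,e^{(\tau-t)B}x;T,y)\,d\tau$ through the delta-convergence property of the parametrix, and the same reliance on the Gaussian/potential estimates to justify Fubini and dominated convergence. The only (minor, organizational) difference is in how the singular short-time limit is handled on the $[t,s]$ piece: the paper introduces the uniform regularization $\e_n(r)=(r-t)/n$, applies Fubini to reorganize the triangle $\{t\le\tau\le r-\e_n(r),\ r\le s\}$, and lets $n\to\infty$ at the end, whereas you fix $\sigma$, apply the integrated equation on $[t,r]$, pass to the limit $r\to\sigma^{-}$ pointwise in $\sigma$, and only afterwards integrate in $\sigma$ and swap orders. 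Both variants are justified by the same bounds \eqref{derpot} and \eqref{eq:stimephi}; the paper's version is perhaps a bit cleaner because the Fubini step then involves non-singular kernels, but your order of operations is equally valid once you note that the inner $\eta$-integral is controlled uniformly in $r<\sigma$ by the potential estimate.
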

\begin{proof}
For any $s\in[t,T[ $ one can write
\begin{align}
\begin{aligned}
\Phi\big(s,e^{(s-t)B}x;T,y\big)-\Phi(t,x;T,y)=&
\underbrace{\int_{s}^{T}\int_{\R^N} \(\param\big(s,e^{(s-t)B}x;r,\y\big)-\param(t,x;r,\y)\)\phi(r,\y;T,y)d\y dr}
_{\eqqcolon G(t,x)}\\
&-\underbrace{\int_{t}^{s}\int_{\R^N} \param(t,x;r,\y)\phi(r,\y;T,y)  d\y dr}_{\eqqcolon H(t,x)}.
\end{aligned}
\end{align}
%
First, we study the term $G(t,x)$.
Remark \ref{rem:eq_param} yields 
\begin{equation}
G(t,x)={\int_{s}^{T}\int_{\R^N} \(\int_t^{s}-\A^{(r,\y)}\param\big(\t,e^{(\t-t)B}x;r,\y\big)d\t\)\phi(r,\y;T,y)d\y dr}.
\end{equation}
By \eqref{derparam} and Assumption \ref{ass:regularity}, 
for every $\e>0$ we have 
\begin{equation}
\left| \A^{(r,\y)}\param(\t,e^{(\t-t)B}x;r,\y) \right| \leq \frac{C}{r-\t}
\G^{\m+\e}(\t,e^{(\t-t)B}x;r,\y), \qquad t< \t<s< r< T.
\end{equation}
Therefore, considering also \eqref{eq:stimephi}, for any $r\in\,]s,T]$, the function
\begin{equation}
(\t,\y)\mapsto | \A^{(r,\y)}\param(\t,e^{(\t-t)B}x;r,\y)  \phi(r,\y;T,y)  |
\end{equation}
is integrable on $[t,s]\times\R^N$. Thus Fubini's theorem yields 
\begin{equation}
\begin{aligned}
&\int_{\R^N}\(\int_t^{s}\A^{(r,\y)}\param\big(\t,e^{(\t-t)B}x;r,\y\big)  d\t \) \phi(r,\y;T,y) d\y
\\ =&\int_t^{s} \int_{\R^N} \A^{(r,\y)}\param\big(\t,e^{(\t-t)B}x;r,\y\big)\phi(r,\y;T,y)d\y
d\t.
\end{aligned}
\end{equation}
Moreover, 
by the potential estimate \eqref{derpot} with $\delta=\frac{\a}{2}$, for any $\eps>0$
we have
\begin{equation}\label{stima1}
\begin{aligned}
\left| \int_{\R^N}\A^{(r,\y)}\param\big(\t,e^{(\t-t)B}x;r,\y\big)\phi(r,\y;T,y)d\y \right| \leq
{\frac{C}{(T-r)^{1-\frac{\a}{4}}(r-\tau)^{1-\frac{\a}{4}}}
}\G^{\m+\e}\big(\t,e^{(\t-t)B}x;T,y\big).
\end{aligned}
\end{equation}
As the right-hand side term is integrable over $[t,s]\times [s,T]$ as a function of $(\t,r)$, we
can apply once more Fubini's theorem 
to conclude that
\begin{equation}\label{eq:ste103}
G(t,x) =-\int_t^{s} \int_{s}^{T} \int_{\R^N}
\A^{(r,\y)}\param\big(\t,e^{(\t-t)B}x;r,\y\big)\phi(r,\y;T,y)d\y dr d\t.
\end{equation}

Let us consider $H(t,x)$. For every $n\in\N$, we define $\e_n(r):=\frac{1}{n}(r-t)$.
Note that, for every $r\in\,]t,s[$ 
we have $r-\e_n(r)\geq t$.
Hence
\begin{align}
\begin{aligned}
H(t,x) =&\underbrace{ \int_{t}^{s}\int_{\R^N}
\param\big(r-\e_n(r),e^{(r-\e_n(r)-t)B}x;r,\y\big)\phi(r,\y;T,y)  d\y dr }_{=:\tilde H_n(t,x)}\\
&-\underbrace{\int_{t}^{s}\int_{\R^N}
\(\param\big(r-\e_n(r),e^{(r-\e_n(r)-t)B}x;r,\y\big)-\param(t,x;r,\y)\) \phi(r,\y;T,y) d\y dr} _{
=:H_n(t,x)}.
\end{aligned}
\end{align}
Once more, Remark \ref{rem:eq_param} yields
\begin{align}
H_{n}(t,x)&= \int_{t}^{s}\int_{\R^N}\bigg(
\int_{t}^{r-\e_n(r)}\A^{(r,\y)}\param\big(\t,e^{(\t-t)B}x;r,\y\big)d\t \bigg)  \phi(r,\y;T,y)d\y
dr
\intertext{(applying Fubini's theorem as above)}
&= \int_{t}^{s} \int_{t}^{r-\e_n(r)} \int_{\R^N}\A^{(r,\y)}\param\big(\t,e^{(\t-t)B}x;r,\y\big)
\phi(r,\y;T,y)d\y d\t dr \intertext{(setting $\d_n(\t)=\frac{\t-t}{n-1}$ and applying Fubini's
theorem again)}
&= \int_{t}^{s-\e_n(s)}\int_{\t+\d_n(\t)}^{s}\int_{\R^N}
\A^{(r,\y)}\param\big(\t,e^{(\t-t)B}x;r,\y\big) \phi(r,\y;T,y)d\y dr d\t \intertext{(by
\eqref{stima1} and applying Lebesgue's dominated convergence theorem)} &\xrightarrow[n\to\infty]{}
\int_{t}^{s}\int_{\t}^{s} \int_{\R^N} \A^{(r,\y)}\param\big(\t,e^{(\t-t)B}x;r,\y\big)
\phi(r,\y;T,y)d\y dr d\t.
\end{align}
On the other hand, by the potential estimate \eqref{eq:pot_est}, for any $n\in \N$ we have 
\begin{equation}
 \left| \int_{\R^N} \param\big(r-\e_n(r),e^{(r-\e_n(r)-t)B}x;r,\y\big)\phi(r,\y;T,y)  d\y \right|
\leq C\frac{\G^{\m+\e}(\t,e^{(\t-t)B}x;T,y)}{(T-r)^{1-\frac{\a}{2}}(T-r)^{\frac{Q}{2}}},\qquad r\in[t,s]. 
\end{equation}
Thus Lebesgue's dominated convergence theorem yields
\begin{align}
\lim_{n\to\infty} \tilde H_n(t,x)& = \int_t^{s} \lim_{n\to\infty} \int_{\R^N}
\param\big(r-\e_n(r),e^{(r-\e_n(r)-t)B}x;r,\y\big) \phi(r,\y;T,y)  d\y\, dr
\intertext{(by 
\eqref{condfin_param}, since $\y\mapsto\phi(r,\y;T,y)$ is a bounded and continuous function for
every $r\in[t,s]$)} & =\int_t^s \phi\big(r,e^{(r-t)B}x;T,y\big) dr.
\end{align}
We have proved that 
\begin{equation}
H(t,x)
=
\int_{t}^{s}\int_{\t}^{s} \int_{\R^N} \A^{(r,\y)}\param(\t,e^{(\t-t)B}x;r,\y) \phi(r,\y;T,y)d\y dr d\t
+\int_t^s \phi(\t,e^{(\t-t)B}x;T,y) d\t.
\end{equation}
This and \eqref{eq:ste103} prove the statement.
\end{proof}

We are now in the position to prove Theorem \ref{main}, namely that $\sol=\sol(t,x;T,y)$ defined by \eqref{FS} is a fundamental solution of $\Ac + Y$ in the sense of definition Definition \ref{fund}, and that the Gaussian bounds from \eqref{eq:gaussian_1} to \eqref{eq:gaussian_4} are satisfied.

\begin{proof}[Proof of Theorem \ref{main}
] Let $\sol=\sol(t,x;T,y)$ be defined by \eqref{FS}. 

\emph{Step 1.} We show that $\sol=\sol(t,x;T,y)$ satisfies point i) of Definition \ref{fund},
namely that
$\sol(\cdot,\cdot;T,y)$ is an integral solution to \eqref{eq:Yu_eq_A} on $\mathcal{S}_T$ in the sense of Definition \ref{solint}. 
By Lemma \ref{leib}, we have
\begin{align}
\sol\big(s,e^{(s-t)B}x;T,y\big)-\sol(t,x;T,y)
& = \param(s,e^{(s-t)B}x;T,y)-\param(t,x;T,y)+\Phi(s,e^{(s-t)B}x;T,y)-\Phi(t,x;T,y)\\
& =- \int_t^{s} \Big( \A^{(r,\y)}\param(\t,e^{(\t-t)B}x;r,\y) + F(\t,x;T,y) \Big) d\t. \label{eq:ste102}
\end{align}
Furthermore, by \eqref{eq:def_F} and since $\phi(t,x;T,y)$ solves the integral equation \eqref{volt}, we obtain
\begin{align}
\A^{(r,\y)}\param(\t,e^{(\t-t)B}x;r,\y) + F(\t,x;T,y) & = \A \param(\t,e^{(\t-t)B}x;T,y)  \\
&\quad + \int_{\t}^{T}\int_{\R^N}\A \param(\t,e^{(\t-t)B}x;r,\y)\phi(r,\y;T,y)d\y  dr
\intertext{(by {Proposition \ref{lem:deriv_theta}})} 
& = \A \param(\t,e^{(\t-t)B}x;T,y)+\A \Phi(\t,e^{(\t-t)B}x;T,y) \\
& = \A \sol(\t,e^{(\t-t)B}x;T,y) ,
\end{align}
which, together with \eqref{eq:ste102}, concludes the proof.

\emph{Step 2.} We show that $\sol=\sol(t,x;T,y)$ satisfies point ii) of Definition \ref{fund}. In
light of the estimate \eqref{eq:stimephi}, it is straightforward to see that
\begin{equation}
| \Phi(t,x;T,y) | \leq C (T-t)^{\frac{\a}{2}} \G^{\m+\e}(t,x;T,y), \qquad
(T,y)\in\mathcal{S}_{\TT}, \ (t,x)\in \mathcal{S}_T.
\end{equation}
Therefore, it is enough to prove that, for any fixed $(T,y)\in ]0,\TT[ \times \R^N$, we have
\begin{equation}\label{condfin_param}
\lim_{\substack{(t,x)\to(T,y)\\t<T}}
\int_{\R^N}\param(t,x;T,\y)f(\y)d\y=f(y), \qquad f\in C_b(\R^N).
\end{equation}
%
Recalling the definition of the parametrix $\param$, we add and subtract to obtain
\begin{equation}
\begin{aligned}
\int_{\R^N}\param(t,x;T,\y)f(\y)d\y&=\int_{\R^N} \G^{(T,\y)} (t,x;T,\y)f(\y)d\y\\
&=\int_{\R^N}\G^{(T,y)} (t,x;T,\y)f(\y)d\y +
\underbrace{\int_{\R^N}\( \G^{(T,\y)} (t,x;T,\y)-\G^{(T,y)} (t,x;T,\y)\) f(\y)d\y }_{=:J(t,x)}.
\end{aligned}
\end{equation}
Furthermore, by estimate \eqref{incrementiparam}, for every $\e>0$ one has 
\begin{equation}
|J(t,x)|\leq C\int_{\R^N}  |y-\y|_B^\a \G^{\m+\e}(t,x;T,\y)d\y. 
\end{equation}
Eventually, \eqref{condfin_param} follows from classical arguments.

\emph{Step 3.} We show the upper Gaussian bounds
\eqref{eq:gaussian_1}-\eqref{eq:gaussian_2}-\eqref{eq:gaussian_3} for $p$ and its derivatives. The
proof of the lower Gaussian bound \eqref{eq:gaussian_4} is similar to that of Theor. 4.7 in
\cite{peslangevin} and Section 5.1.4. in \cite{MR4355925}, thus we omit it for sake of
brevity.

{The Gaussian bounds of Proposition \ref{prop:gaussian_estimates}} 
and the definition of parametrix \eqref{eq:def_parametrix} yield the estimates
\eqref{eq:gaussian_1}-\eqref{eq:gaussian_2}-\eqref{eq:gaussian_3} for $\param=\param(t,x;T,y)$.
{The estimates of Proposition \ref{lem:deriv_theta} and the fact that $p = \param + \Phi$ conclude
the proof.}
\end{proof}
\begin{remark}
\label{rem:almost_surely}
Any integral solution $u$ to equation \eqref{eq:integral_sol}
on $\mathcal{S}_T$ in the sense of Definition \ref{solint}, is Lie-differentiable along
$Y$ almost everywhere on $\mathcal{S}_T$. 
Indeed, the set $H_T$ of $(t,x)\in \mathcal{S}_T$ such that $Yu(t,x)$ in
\eqref{eq:limit_Lie_deriv} exists finite, is measurable as the limit
\begin{equation}
 \limsup_{\tau \to t^+} \frac{u\big( \tau , e^{(\tau - t) B} x \big) - u(t,x)}{\tau - t}
\end{equation}
is a measurable function of $(t,x)$ and the same holds for $\liminf$. This is a straightforward
consequence of the continuity of $u$ along the integral curves of $Y$. The fact that $H_T$ has
null Lebesgue measure stems from Fubini's theorem, as $u$ is absolutely continuous along the
integral curves of $Y$ and the map
\begin{equation}
(\tau,y)\mapsto (\tau,e^{\tau B} y)
\end{equation}
is a diffeomorphism on $\mathcal{S}_T$.
\end{remark}

%
%

%
%
\section{Regularity of the fundamental solution} \label{regolaritasoluzione}
%
%
%
In this section we prove Theorem \ref{ta1}. Since $\sol(\cdot,\cdot;T,y)$ can be represented as in
\eqref{FS}, we need to study the regularity of $\param(\cdot,\cdot;T,y)$ and
$\Phi(\cdot,\cdot;T,y)$. While the former term can be easily dealt with by means of the Gaussian
estimates of Appendix \ref{appendix:estimates_parametrix}, the latter has to be treated more carefully. We start
with the proof of Theorem \ref{ta1}, which is based on the regularity estimates for
$\Phi(\cdot,\cdot;T,y)$ and $\param(\cdot,\cdot;T,y)$ proved in Section \ref{sec:regularity_Phi}
and Section \ref{sec:regularity_param}, respectively.

\begin{proof}[Proof of Theorem \ref{ta1}]
Let  $\b<\a$. For fixed $(T,y)\in \mathcal{S}_{\TT}$, we set
\begin{equation}
f(t,x) :=p(t,x;\T,y), \qquad (t,x) \in \SS_T.
\end{equation}
We first note that, by definition of fundamental solution, \eqref{ae4} is satisfied with $f_Y = - \Ac f$.
Furthermore, for any $t\in]0,T[$, by \eqref{eq:norm_C2alpha} and the representation \eqref{FS} we have
\begin{align}
  \| f \|_{C_{B}^{2,\beta}(\mathcal{S}_{t})} & = N_{\param,1} + N_{\param,2}   +  N_{\Phi,1}  +  N_{\Phi,2}   ,
\end{align}
where
\begin{align}
N_{\param,1} & :=       \sum_{i=1}^{d}
  \|\partial_{x_i}     \param(\cdot,\cdot;T,y)     \|_{C^{1+\beta}_{Y}(\mathcal{S}_t)}+\sum\limits_{i,j=1}^{d}  \big(   \|  \partial_{x_i x_j}   \param(\cdot,\cdot;T,y)  \|_{C^{\beta}_{Y}(\mathcal{S}_t)} + \|  \partial_{x_i  x_j}  \param(\cdot,\cdot;T,y)  \|_{C^{\beta}_{d}(\mathcal{S}_t)} \big), \\
N_{\param,2} & :=     \| \Ac \param(\cdot,\cdot;T,y) \|_{L^{\infty}([0,t];C^{\beta}_{B}(\rn))}  , \\
N_{\Phi,1} & :=         \sum_{i=1}^{d}
  \|\partial_{x_i}     \Phi(\cdot,\cdot;T,y)     \|_{C^{1+\beta}_{Y}(\mathcal{S}_t)}+\sum\limits_{i,j=1}^{d}  \big(   \|  \partial_{x_i x_j}   \Phi(\cdot,\cdot;T,y)  \|_{C^{\beta}_{Y}(\mathcal{S}_t)} + \|  \partial_{x_i  x_j}  \Phi(\cdot,\cdot;T,y)  \|_{C^{\beta}_{d}(\mathcal{S}_t)} \big),\\ 
N_{\Phi,2} & :=   \| \Ac \Phi(\cdot,\cdot;T,y) \|_{L^{\infty}([0,t];C^{\beta}_{B}(\rn))}   .
\end{align}

Now, the estimates of Lemma \ref{lem:regularity_parametrix} yield
\begin{equation}\label{eq:bound_N1P}
N_{\param,1}  \leq \frac{C}{(T-t)^{\frac{Q+{2+\b}}{2}}}.
\end{equation}

To bound $N_{\param,2}$, first fix $i,j=1,\dots, d$ and note that, by estimate \eqref{derparam},
we obtain
\begin{equation}\label{eq:bound_der_param}
\sup_{x\in\R^N} \big|  \partial_{x_i x_j}   \param(s,x;T,y) \big| \leq \frac{C}{(T-t)^{\frac{Q+2}{2}}}, \qquad s<t. 
\end{equation}
Furthermore, \eqref{eq:bound_N1P} combined with Remark \ref{rem:C0alpha_intrins} yield
\begin{equation}\label{eq:est_holder_der_param}
\sup_{x,v\in\R^N} \frac{|  \partial_{x_i x_j}   \param(s,x;T,y) -   \partial_{x_i x_j}
 \param(s,v;T,y)| }{ |x - v|^{\beta}_B  } \leq \frac{C}{(T-t)^{\frac{Q+{2+\b}}{2}}},  \qquad s<t.
\end{equation}
Thus, by \eqref{eq:bound_der_param}-\eqref{eq:est_holder_der_param} we obtain
\begin{equation}
 \| \partial_{x_i x_j}   \param(s,\cdot;T,y)   \|_{C^{\beta}_{B}(\rn)}  \leq \frac{C}{(T-t)^{\frac{Q+{2+\b}}{2}}},  \qquad s<t,
\end{equation}
which in turn implies
\begin{equation}
 \| \partial_{x_i x_j}   \param(\cdot,\cdot;T,y)   \|_{L^{\infty}([0,t];C^{\beta}_{B}(\rn))}  \leq \frac{C}{(T-t)^{\frac{Q+{2+\b}}{2}}}.
\end{equation}
This, together with Assumption \ref{ass:regularity}, prove
\begin{equation}\label{eq:bound_N2P}
N_{\param,2}  \leq \frac{C}{(T-t)^{\frac{Q+{2+\b}}{2}}}.
\end{equation}
The bound for $N_{\Phi,1}$ stems from the estimates of Proposition \ref{prop:regularity_Phi_1},
which yield
\begin{equation}
N_{\Phi,1}  \leq \frac{C}{(T-t)^{\frac{Q+{2-(\alpha-\b)}}{2}}} \leq    \frac{C}{(T-t)^{\frac{Q+{2+\b}}{2}}}  .
\end{equation}
Eventually, the bound for $N_{\Phi,2}$ follows from the same arguments used to bound
$N_{\param,2}$.
\end{proof}

The rest of this section is devoted to the results utilized in the proof of Theorem \ref{ta1}. It is useful to introduce the following
\begin{notation}
Let $f=f(t,x;T,y)$ be a function defined for $(T,y)\in\mathcal{S}_{\TT}$ and $(t,x)\in\SS_T$, suitably differentiable w.r.t. $x$. For any $i
=1,\dots, N$, we set
\begin{equation}
\partial_{i} f(t,x;T,y):= \partial_{x_i}  f(t,x;T,y), 
\end{equation}
and we adopt analogous notations for the higher-order derivatives.
\end{notation}
This notation is useful in order to compose partial derivatives with other functions. For instance, if $g=g(t,x)$ is a given function, then
\begin{equation}
\partial_{i} f\big(t,g(t,x);T,y\big) =  \partial_{z_i}  f(t,z;T,y)|_{z=g(t,x)}.
\end{equation}
\subsection{Regularity estimates of $\Phi$}\label{sec:regularity_Phi}
Now prove the H\"older estimates for $\Phi(\cdot,\cdot;T,y)$. We recall that $Q$ denotes the
homogeneous dimension of $\rn$ as in \eqref{ae11}.
\begin{proposition}\label{prop:regularity_Phi_1}
For every $\e>0$ and $0<\b <\a$ there exists a positive constant $C$, only dependent on $\TT,\m, B,  \e, \a, \b $ and the $\a$-H\"older norms of the coefficients,
such that, for any $i,j,k=1,\dots,d$, we have
\begin{align}
\big|\p_i\Phi(s,e^{(s-t)B}x;T,y)-\p_i\Phi(t,x;T,y)\big| 
&\leq C(s-t)^{\frac{1+\beta}{2}} \frac{(T-t)^{{Q}/{2}}}{(T-s)^{\frac{Q+2-(\a-\b) }{2}}}\G^{\m+\e}(t,x;T,y),\label{dercurveinteg}\\
\big|\p_{ij}\Phi(s,e^{(s-t)B}x;T,y)-\p_{ij}\Phi(t,x;T,y)\big| 
&\leq C (s-t)^{\frac{\b }{2}}\frac{(T-t)^{{Q}/{2}}}{(T-s)^{\frac{Q+2-(\a-\b) }{2}}}\G^{\m+\e}(t,x;T,y),\label{dercurveinteg2}\\
\left|\p_{ij}\Phi(t,x  +h \mathbf{e}_k ;T,y)-\p_{ij}\Phi(t,x;T,y)\right|
&\leq C |h|^{\b} {\frac{\G^{\m+\e}(t,x +h \mathbf{e}_k ;T,y)+\G^{\m+\e}(t,x;T,y)}{(T-t)^{\frac{2- (\a-\b) }{2}}} }\label{dercampidritti},
\end{align}
for every $(T,y)\in\mathcal{S}_{\TT}$, $(t,x)\in\mathcal{S}_{T}$, $t< s<T$ and $h\in\R$.
\end{proposition}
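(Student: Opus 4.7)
By Proposition~\ref{lem:deriv_theta} we may differentiate under the integral sign, so that
\begin{equation}
\p_i\Phi(t,x;T,y) = \int_t^T\!\!\int_{\R^N} \p_i\param(t,x;\tau,\eta)\,\phi(\tau,\eta;T,y)\,d\eta\,d\tau,
\end{equation}
and similarly for $\p_{ij}\Phi$. The classical parametrix strategy is to split $\int_t^T$ into a \emph{local} part (where $\p_i\param$ and $\p_{ij}\param$ are strongly singular in $\tau-t$ but the time window is short) and a \emph{non-local} part (where the kernels are H\"older regular in $(t,x)$ but the $\phi$-factor contributes an integrable singularity in $\tau$ from \eqref{eq:stimephi}). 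In both parts the spatial Gaussian convolutions are evaluated via the Chapman--Kolmogorov-type identities already used in the proof of Proposition~\ref{proposition_serie}, producing the factor $\G^{\m+\e}(t,x;T,y)$; the real work lies in the bookkeeping of the fractional time exponents.

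\textbf{Lie increments \eqref{dercurveinteg}--\eqref{dercurveinteg2}.} I would split $\int_t^T = \int_t^s + \int_s^T$ and treat the two parts separately. On $\int_t^s$ (local) I would use the Gaussian bounds of Appendix~\ref{appendix:estimates_parametrix} for $\p_i\param$ and $\p_{ij}\param$ together with the triangle inequality and \eqref{eq:stimephi}; the time integrals of the type $\int_t^s(\tau-t)^{-1/2}(T-\tau)^{-1+\a/2}d\tau$ and $\int_t^s(\tau-t)^{-1}(T-\tau)^{-1+\a/2}d\tau$ then yield the powers $(s-t)^{(1+\b)/2}$ and $(s-t)^{\b/2}$ once one extracts the compensating loss $(T-s)^{-(2-(\a-\b))/2}$. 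On $\int_s^T$ (non-local) I would invoke the Lie-H\"older regularity of $(t,x)\mapsto \p_i\param(t,x;\tau,\eta)$ and $(t,x)\mapsto \p_{ij}\param(t,x;\tau,\eta)$ along $Y$ provided by Lemma~\ref{lem:regularity_parametrix}; together with \eqref{eq:stimephi} and a spatial Gaussian convolution this again supplies $\G^{\m+\e}(t,x;T,y)$ multiplied by the required factors of $(s-t)$ and $(T-s)$.

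\textbf{Spatial increment \eqref{dercampidritti}.} I would carry the increment inside the integral and split the $\tau$-integration at a breakpoint $t^{\ast}\in(t,T)$ chosen as a suitable power of $|h|$. On $[t,t^{\ast}]$ I would apply the triangle inequality combined with the pointwise Gaussian bound on $\p_{ij}\param$ from Appendix~\ref{appendix:estimates_parametrix} and \eqref{eq:stimephi}; on $[t^{\ast},T]$ I would exploit the H\"older-in-$x$ regularity of $\p_{ij}\param$ in the non-degenerate directions $\mathbf{e}_k$, again from Lemma~\ref{lem:regularity_parametrix}. Optimising $t^{\ast}$ balances the two contributions and delivers the announced bound in $|h|$.

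\textbf{Main obstacle.} The crux is the Lie-H\"older regularity of $\p_i\param$ and $\p_{ij}\param$ along $Y$ used in the non-local parts: since the coefficients of $\A$ are merely measurable in $t$, direct $t$-differentiation of $\param$ is unavailable, and the key trick is to exploit the identity $Y\param(\cdot,\cdot;\tau,\eta) = -\A^{(\tau,\eta)}\param(\cdot,\cdot;\tau,\eta)$ from Remark~\ref{rem:eq_param} in order to trade one unit of $Y$-regularity for two spatial derivatives in the non-degenerate directions, which are then handled by Appendix~\ref{appendix:estimates_parametrix}. Interpolation between the resulting full-scale linear bound in $s-t$ and the brute-force triangle bound produces the fractional exponents $\b/2$ and $(1+\b)/2$ appearing in \eqref{dercurveinteg}--\eqref{dercurveinteg2}; once this is done, the remaining estimates reduce to Gaussian-convolution computations of the same flavour as those in the proof of Proposition~\ref{proposition_serie}.
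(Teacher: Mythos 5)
Your overall architecture is right — differentiate under the integral, split $\int_t^T=\int_t^s+\int_s^T$, and exploit $Y\param = -\A^{(\tau,\eta)}\param$ to trade one Lie derivative for two spatial ones — but the plan has a genuine gap at the heart of the time-integral bookkeeping: you never invoke the potential estimates \eqref{derpot} of Proposition~\ref{lem:stime_pot}, and without them the route you describe fails on both the local and the non-local parts.

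On the local part $\int_t^s$, plugging the raw Gaussian bound $|\p_{ij}\param(t,x;\tau,\eta)|\lesssim(\tau-t)^{-1}\G^{\m+\e}(t,x;\tau,\eta)$ against \eqref{eq:stimephi} produces exactly the integral you wrote, $\int_t^s(\tau-t)^{-1}(T-\tau)^{-1+\a/2}\,d\tau$, which \emph{diverges} logarithmically at $\tau=t$; and for first derivatives the analogous $\int_t^s(\tau-t)^{-1/2}\,d\tau$ converges but to order $(s-t)^{1/2}$ only, from which the required $(s-t)^{(1+\b)/2}$ cannot be ``extracted.'' The paper circumvents both problems by applying the sharper potential estimate \eqref{derpot} with $\delta=\a-\b$, which gives $(\tau-t)^{-(|\nu|_B-(\a-\b))/2}$ instead of $(\tau-t)^{-|\nu|_B/2}$: this is the improvement that makes the second-derivative integral converge and produces the extra $(s-t)^{\b/2}$. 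The gain in \eqref{derpot} comes from the H\"older regularity of $\phi$ in space (\eqref{eq:stimephihold}) and the cancellation $\int_{\R^N}\p_{x_ix_j}\G^{(\tau,v)}(t,x;\tau,\eta)\,d\eta=0$ — neither of which appears in your plan.

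On the non-local part $\int_s^T$, applying Lemma~\ref{lem:regularity_parametrix} pointwise with $T\to\tau$, $y\to\eta$, multiplying by $|\phi|$ and convolving in $\eta$ leaves you with $\int_s^T(\tau-t)^{Q/2}(\tau-s)^{-(Q+2+\b)/2}(T-\tau)^{-1+\a/2}\,d\tau$, whose $(\tau-s)^{-(Q+2+\b)/2}$ singularity at $\tau\downarrow s$ is \emph{not integrable} (the exponent exceeds $1$ since $Q\geq1$). The paper instead represents the increment $\p_i\param(s,e^{(s-t)B}x;\tau,\eta)-\p_i\param(t,x;\tau,\eta)$ through Lemma~\ref{lem:deriv_param_sol} as $-\int_t^s\big(\A^{(\tau,\eta)}\p_i\param+\sum_j b_{ji}\p_j\param\big)(r,e^{(r-t)B}x;\tau,\eta)\,dr$, applies Fubini to put the $dr$ integration on the outside, and only then applies the potential estimates of Proposition~\ref{lem:stime_pot} to the inner $\int d\eta$; the resulting double integral $K$ over $(r,\tau)\in[t,s]\times[s,T]$ is convergent precisely because the singular variable is $\tau-r$ with $r\leq s\leq\tau$, not $\tau-s$. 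The same remark applies to the spatial increment \eqref{dercampidritti}: the mean-value theorem gives $\p_{ijk}\param$, and then \eqref{derpot} with $[\nu]_B=3$, $\delta=(\a-\b)/2$ is what keeps the $\tau$-integral finite; applying \eqref{hold3param} pointwise and integrating against $|\phi|$ would reproduce the divergent $(\tau-t)^{-(2+\b)/2}$ singularity.
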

The proof of estimates \eqref{dercurveinteg}-\eqref{dercurveinteg2} relies on the following
\begin{lemma}\label{lem:deriv_param_sol}
Let $(T,y)\in\mathcal{S}_{\TT}$. Then, for any $i=1,\dots, d$, the function $u:=\p_i \param
(\cdot,\cdot;T,y)
$ is a strong Lie solution to the equation
\begin{equation}
\A u+ Yu = - \sum_{j=1}^{d+d_1}
b_{ji} \p_j\param (\cdot,\cdot;T,y)  \     \text{ on }\mathcal{S}_{T},
\end{equation}
in the sense of Definition \ref{solint}.
\end{lemma}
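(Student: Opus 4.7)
The key starting point is that, by Remark \ref{rem:eq_param} and \eqref{eq:int_eq_frozen}, the parametrix $\param(\cdot,\cdot;T,y)$ is a strong Lie solution of the \emph{frozen} equation $\Ac^{(T,y)}\param + Y\param = 0$ on $\SS_T$. Moreover, $\param(\cdot,\cdot;T,y)$ is of class $C^\infty$ in the spatial variable $x$ and absolutely continuous along the integral curves of $Y$ (cf.\ Remark after \eqref{eq:param}). The strategy is to differentiate this identity with respect to $x_i$ and exploit the commutation properties of $\p_{x_i}$ with $\Ac^{(T,y)}$ and with $Y$, to obtain a pointwise classical identity for $\p_i\param$, which can then be reintegrated along the integral curves of $Y$ to yield the integral form required by Definition \ref{solint}. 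The operator appearing on the left of the derived equation coincides with the one used in the parametrix construction, which is the natural reading of the $\Ac$ in the lemma's claim.

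\textbf{Step 1 (Regularity).} I would first verify the integrability prerequisites in Definition \ref{solint} for $u := \p_i\param(\cdot,\cdot;T,y)$. The Gaussian bounds on the spatial derivatives of $\param$ from Proposition \ref{prop:gaussian_estimates} imply that $u$, $\p_{j}u$, and $\p_{jk}u$ belong to $L^\infty_{\loc}([0,T[\,;C_b(\rn))$ for $j,k=1,\dots,d$, so all terms of the would-be equation are locally integrable in time.

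\textbf{Step 2 (Commutators).} The coefficients $a_{kl}(t,e^{(t-T)B}y)$ of $\Ac^{(T,y)}$ are independent of $x$, so $[\p_i,\Ac^{(T,y)}]=0$ on smooth functions. From $Y=\p_t+\langle Bx,\nabla\rangle$ one computes directly
\begin{equation*}
[\p_i,Y]=\sum_{j=1}^{N} b_{ji}\,\p_j,
\end{equation*}
and the H\"ormander block structure of $B$ in \eqref{B} ensures $b_{ji}=0$ whenever $i\le d$ and $j>d+d_1$, yielding precisely the range $j=1,\dots,d+d_1$ in the right-hand side of the lemma.

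\textbf{Step 3 (Pointwise identity and integral form).} Applying $\p_i$ to the pointwise identity $\Ac^{(T,y)}\param+Y\param=0$ (valid a.e.\ in $t$ along each curve of $Y$ by the strong Lie solution property) and using the two commutation relations of Step~2, one gets
\begin{equation*}
\Ac^{(T,y)}\p_i\param + Y\p_i\param = -\sum_{j=1}^{d+d_1} b_{ji}\,\p_j\param\qquad\text{a.e.\ on }\SS_T,
\end{equation*}
which is the equation in the lemma (the operator $\Ac$ acting with the coefficients frozen along the integral curve through $(T,y)$, consistently with the parametrix construction). To upgrade this to the strong Lie form of Definition \ref{solint}, I would integrate along the integral curve $\tau\mapsto(\tau,e^{(\tau-t)B}x)$ of $Y$: the Gaussian bounds on $\p_i\param$ and $\p_{jk}\param$ make all integrals well-defined, and the absolute continuity of $\p_i\param$ along the curves of $Y$ (inherited from the corresponding property for $\param$ plus the smoothness of $\param$ in $x$) recovers the integral identity of \eqref{eq:integral_sol}.

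\textbf{Main obstacle.} The delicate point is Step~3. One must justify the termwise differentiation of the integral Lie identity for $\param$ with respect to $x_i$: the starting point $e^{(s-t)B}x$ of the curve itself depends on $x$, so a naive chain rule produces matrix factors $(e^{(s-t)B})_{ji}$ that must be expanded and reorganized to yield precisely the commutator contribution $\sum_{j=1}^{d+d_1}b_{ji}\p_j\param$. An alternative, cleaner route is to work pointwise (using that $\param$ is smooth in $x$ and $\Ac^{(T,y)}$ has $x$-independent coefficients, so $\p_i$ passes inside $\Ac^{(T,y)}$ without any residue), derive the classical equation, and then reintegrate using the Gaussian bounds. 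Either way, the bookkeeping of the commutator and the verification that the Definition \ref{solint} regularity is satisfied at each step is the bulk of the work.
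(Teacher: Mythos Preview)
Your proposal is correct and follows essentially the same approach as the paper: compute the commutator $[\p_i,Y]=\sum_j b_{ji}\p_j$, use the block structure of $B$ to truncate the sum at $d+d_1$, note that $[\p_i,\Ac^{(T,y)}]=0$ since the frozen coefficients are $x$-independent, and then pass from the pointwise identity to the integral form of Definition~\ref{solint}. Your reading of the operator in the statement as the frozen $\Ac^{(T,y)}$ is also how the paper's proof (and its later applications) treat it.

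The one point where the paper is slightly more explicit than your sketch is the justification that $\p_i\param$ is absolutely continuous along the integral curves of $Y$ with a.e.\ Lie derivative $Y\p_i\param$: rather than saying this is ``inherited'' from $\param$, the paper notes that one checks directly, from the explicit Gaussian formula for $\param$, that $\p_i\p_t\param=\p_t\p_i\param$. Once this interchange is verified, your ``alternative, cleaner route'' (pointwise identity, then reintegrate) is exactly what the paper does; your first route via chain-ruling the integral identity is indeed messier and is not the one taken.
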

\begin{proof}
We note that
\begin{equation}
[\p_i,Y]\param(t,x;T,y) = [\p_i, \langle B x, \nabla \rangle+ \partial_t ] \param(t,x;T,y) 
=\sum_{j=1}^{d+d_1} b_{ji}\p_j \param(t,x;T,y),
\end{equation}
for every $x\in\R^N$ and for almost every $t\in[0,T[$, where, in the last equality, we used that $b_{ji}=0$ if $j>d+d_1$. 
While it is obvious that the previous identity holds for smooth functions of $(t,x)$, one can directly check that $\p_i\p_t \param(t,x;T,y)=\p_t\p_i \param(t,x;T,y)$ and thus the identity holds for the parametrix too.
Therefore, we obtain 
\begin{align*}
\p_i\param(s,e^{(s-t)B}x;\t,\y)-\p_i\param(t,x;\t,\y)
=&\int_t^s (Y\p_i\param)(r,e^{(r-t)B}x;\t,\y)dr \\
=&\int_t^s \Big( ( \p_iY\param)(r,e^{(r-t)B}x;\t,\y)-[\p_i,Y]\param(r,e^{(r-t)B}x;\t,\y) \Big)dr
\intertext{(by Remark \ref{rem:eq_param})}
=& - \int_t^s \Big( ( \p_i\A^{(\t,\y)}\param)(r,e^{(r-t)B}x;\t,\y)+\sum_{j=1}^{d+d_1}
b_{ji}\p_j\param(r,e^{(r-t)B}x;\t,\y) \Big) dr
 \intertext{(since
$\p_i\A^{(\t,\y)}=\A^{(\t,\y)}\p_i$)} =& - \int_t^s \Big(
\big(\A^{(\t,\y)}\p_i\param\big)(r,e^{(r-t)B}x;\t,\y)+\sum_{j=1}^{d+d_1}
b_{ji}\p_j\param(r,e^{(r-t)B}x;\t,\y) \Big) dr.
\end{align*}
\end{proof}
{We are now in the position to prove Proposition \ref{prop:regularity_Phi_1}.} 
\begin{proof}[Proof of Proposition \ref{prop:regularity_Phi_1}]
Let $(T,y)\in\mathcal{S}_{\TT}$, $(t,x)\in\mathcal{S}_{T}$, $t< s<T$ and $h\in\R$ be fixed. Also fix $i,j,k\in\{1,\dots, d\}$. 
First we prove \eqref{dercurveinteg}. 
By adding and subtracting, we have
\begin{equation}
\begin{aligned}
&\p_i\Phi(s,e^{(s-t)B}x;T,y)-\p_i\Phi(t,x;T,y)\\
=&\int_{s}^T\int_{\R^N} \underbrace{ \Big(\p_i\param(s,e^{(s-t)B}x;\t,\y)-\p_i\param(t,x;\t,\y)\Big)  }_{=: I(\tau,\eta)} \phi(\t,\y;T,y)d\y d\t
-\underbrace{\int_{t}^{s}\int_{\R^N} \p_{i}\param(t,x;\t,\y) \phi(\t,\y;T,y)d\y d\t}_{=: L}.
\end{aligned}
\end{equation}
We consider the first term. 
By Lemma \ref{lem:deriv_param_sol} and swapping the integrals as in the proof of Proposition \ref{leib}, we have
\begin{align*}
&\int_{s}^T\int_{\R^N} { I(\tau,\eta)} \phi(\t,\y;T,y)d\y d\t\\ =& - \int_{s}^T \int_t^s
\int_{\R^N} \bigg(  \big( \A^{(\t,\y)} \p_i\param\big)(r,e^{(r-t)B}x;\t,\y) + \sum_{j=1}^{d+d_1} b_{ji}\p_j\param(r,e^{(r-t)B}x;\t,\y) \bigg)
\phi(\t,\y;T,y)d\y dr d\t.
\end{align*}
Therefore, the estimates {of Proposition \ref{lem:stime_pot}} 
with $\delta= (\a-\b) /2$ yield
\begin{align}
\bigg|\int_{s}^T\int_{\R^N} { I(\tau,\eta)} \phi(\t,\y;T,y)d\y d\t\bigg| 
&
\leq \int_{s}^T \int_t^s \frac{C}{(T-\t)^{1-\frac{\a-\b}{4}}({\t-r})^{\frac{3}{2}-\frac{\a+\b}{4}}} \G^{\m+\e}(r,e^{(r-t)B}x;T,y)dr d\t
\intertext{(by a standard estimate on $\G^{\m+\e}(r,e^{(r-t)B}x;T,y)$)}
&\leq C\underbrace{\int_{s}^T \int_t^s \frac{1}{(T-\t)^{1-\frac{\a-\b}{4}}({\t-r})^{\frac{3}{2}-\frac{\a+\b}{4}}} dr d\t}
_{=:K
} \(\frac{T-t}{T-s}\)^{{Q}/{2}} \G^{\m+\e}(t,x;T,y). \label{eq:estimate_ste1}
\end{align}
We now bound $K$:
\begin{align}
K
& =
\int_t^s \int_{s}^T \frac{1}{(T-\t)^{1-\frac{\a-\b}{4}}({\t-r})^{\frac{3}{2}-\frac{\a+\b}{4}}} d\t dr
 \leq
\int_t^s \int_{s}^T \frac{1}{(T-\t)^{1-\frac{\a - \b}{4}}(\t-r)^{1-\frac{\a - \b}{4}}} d\t \frac{1}{({s-r})^{\frac{1}{2}-\frac{\beta}{2}}} dr
\intertext{(solving the integral in $d \tau$)}
& \leq
C
\int_t^s \frac{1}{(T-r)^{1-\frac{\alpha-\beta}{2}}} \frac{1}{({s-r})^{\frac{1}{2}-\frac{\b }{2}} }dr  \leq
\frac{C}{(T-s)^{1-\frac{\alpha-\beta}{2}}} \int_t^s \frac{1}{({s-r})^{\frac{1 - \beta}{2}} }dr  \leq
\frac{C}{(T-s)^{1-\frac{\alpha-\beta}{2}}} (s-t)^{\frac{1+\beta}{2}}. \\ \label{eq:estimate_ste2}
\end{align}

On the other hand, estimate {\eqref{derpot}} with $\delta= \a - \b $ yields 
\begin{align*}
|L|&\leq\int_t^s \frac{C}{(T-\t)^{1-\frac{\a - \b}{2}}(\t-t)^{\frac{1}{2}-\frac{\b}{2}}} d\t \, \G^{\m+\e}(t,x;T,y) \leq\frac{C}{(T-s)^{1-\frac{\a - \b}{2}}}\int_t^s \frac{1}{(\t-t)^{\frac{1-\beta}{2}}} d\t\,  \G^{\m+\e}(t,x;T,y)\\
&\leq\frac{C}{(T-s)^{1-\frac{\a - \b}{2}}}(s-t)^{\frac{1+\beta}{2}} \G^{\m+\e}(t,x;T,y).
\end{align*}
This, together with \eqref{eq:estimate_ste1}-\eqref{eq:estimate_ste2}, proves \eqref{dercurveinteg}.
Estimate \eqref{dercurveinteg2} can be obtained following the same arguments. 

We finally prove \eqref{dercampidritti}.
By {Proposition \ref{lem:deriv_theta}} we have
\begin{align}
\p_{ij}\Phi(t,x +h \mathbf{e}_k ;T,y) - \p_{ij}\Phi (t,x;T,y)=\int_t^T
\underbrace{\int_{\R^N}\big( \p_{ij}\param (t,x +h \mathbf{e}_k ;\t,\y) - \p_{ij}\param (t,x;\t,\y)\big)\phi(\t,\y;T,y)d\y}_{=:I(\t)} d\t.
\end{align}
We first prove that 
\begin{equation}
|I(\t)|\leq C \frac{|h|^{\b}}{(T-\t)^{1-\frac{\a - \b}{4}}({\t-t})^{1-\frac{\a - \b}{4}}}\(\G^{\m+\e}(t, x + h \mathbf{e}_k  ;T,y)+\G^{\m+\e}(t,{x};T,y)\), \qquad \tau\in ]t,T[.
\end{equation}
We consider the case $\t-t\geq h^2$. By the mean-value theorem, there exists a real $\bar h$ with
$|\bar h|\leq |h|$ such that
\begin{equation}
\left| \p_{ij}\param (t,x + h \mathbf{e}_k ;\t,\y) - \p_{ij}\param(t,x;\t,\y)\right|=|h| \left| \p_{ijk}\param (t,x +\bar h \mathbf{e}_k ;\t,\y)\right|.
\end{equation}
Therefore, by the estimate {\eqref{derpot}} with $\delta = (\a-\b) /2$, we have 
\begin{align}
|I(\t)|& \leq C \frac{|h|}{(T-\t)^{1-\frac{\a - \b}{4}}({\t-t})^{\frac{3}{2}-\frac{\a +
\b}{4}}}\G^{\m+\e}(t,x +\bar h \mathbf{e}_k ;T,y)
 \intertext{(since $\t-t\geq h^2$)}
  & \leq C \frac{|h|^{\b}}{(T-\t)^{1-\frac{\a - \b}{4}}({\t-t})^{1-\frac{\a - \b}{4}}}\G^{\m+\e}(t,x +\bar h \mathbf{e}_k ;T,y)
 \intertext{(by standard estimates on $\G^{\m+\e}(t,x +\bar h \mathbf{e}_k ;T,y)$ with $\t-t\geq h^2$)}
  & \leq C \frac{|h|^{\b}}{(T-\t)^{1-\frac{\a - \b}{4}}({\t-t})^{1-\frac{\a - \b}{4}}}\big(\G^{\m+\e}(t, x+h \mathbf{e}_k;T,y)+\G^{\m+\e}(t,x;T,y)\big).
\end{align}
We now consider the case $\t-t< h^2$. Employing triangular inequality and estimate
{\eqref{derpot}} with $\delta = (\a - \b) /2$, we get
\begin{align*}
|I(\t)|&\leq \frac{C}{(T-\t)^{1-\frac{\a-\b}{4}}({\t-t})^{1-\frac{\a+\b}{4}}}\big(\G^{\m+\e}(t, x+
h \mathbf{e}_k;T,y)+\G^{\m+\e}(t,x;T,y)\big) \intertext{(since $\t-t<h^2$)} & \leq C
\frac{|h|^{\b}}{(T-\t)^{1-\frac{\a-\b}{4}}({\t-t})^{1-\frac{\a-\b}{4}}} \big(\G^{\m+\e}(t, x+ h
\mathbf{e}_k;T,y)+\G^{\m+\e}(t,x;T,y)\big).
\end{align*}
Therefore, combining the previous estimates, we obtain 
\begin{align*}
\bigg |\int_t^T I(\t)d\t\bigg|&\leq C{|h|^{\b}}
\int_t^T\frac{1}{(T-\t)^{1-\frac{\a-\b}{4}}({\t-t})^{1-\frac{\a-\b}{4}}}d\t \big(\G^{\m+\e}(t, x+
h \mathbf{e}_k;T,y)+\G^{\m+\e}(t,x;T,y)\big)   \\ &\leq C{|h|^{\b}}
\frac{1}{(T-t)^{1-\frac{\a-\b}{2}}} \big(\G^{\m+\e}(t, x+ h
\mathbf{e}_k;T,y)+\G^{\m+\e}(t,x;T,y)\big),
\end{align*}
which proves \eqref{dercampidritti}. 
\end{proof}

\subsection{Regularity estimates for the parametrix}\label{sec:regularity_param}
We have the following H\"older estimates for $\param$. 
\begin{lemma}\label{lem:regularity_parametrix}
Let $0\leq\b\leq\a$.
Then for every $\e>0$ there exists a positive constant $C$, only dependent on $\TT,\m, B,  \e, \a , \b$ and the $\a$-H\"older norms of the coefficients,
such that for any $i,j,k=1,\dots,d$ we have
\begin{align}
\big|\p_i\param(s,e^{(s-t)B}x;T,y)- \p_i\param (t,x;T,y)\big| 
&\leq C(s-t)^{\frac{1+\beta}{2}}\frac{(T-t)^{{Q}/{2}}}{(T-s)^{\frac{Q+2+\b }{2}}}\G^{\m+\e}(t,x;T,y), \label{hold1param}\\
\big|\p_{ij}\param(s,e^{(s-t)B}x;T,y)- \p_{ij}\param (t,x;T,y)\big| 
&\leq C (s-t)^{\frac{\b}{2}}\frac{(T-t)^{{Q}/{2}}}{(T-s)^{\frac{Q+2+\b }{2}}}\G^{\m+\e}(t,x;T,y),\label{hold2param}\\
\big|\p_{ij}\param(t, x+ h \mathbf{e}_k;T,y)- \p_{ij}\param(t,x;T,y)\big| &\leq C
|h|^{\b} {\frac{1}{(T-t)^{\frac{2+\b}{2}}} \(\G^{\m+\e}(t, x+ h
\mathbf{e}_k;T,y)+\G^{\m+\e}(t,x;T,y)\)} ,\\
& \label{hold3param} 
\end{align}
for any $(T,y)\in\mathcal{S}_{\TT}$, $(t,x)\in\mathcal{S}_{T}$, $t< s<T$
and $h\in\R$.
\end{lemma}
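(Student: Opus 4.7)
The plan is to exploit the explicit Gaussian form \eqref{eq:param} of $\param$ together with the pointwise Gaussian bounds on its spatial derivatives collected in Appendix \ref{appendix:estimates_parametrix}, and to handle the increments along the integral curves of $Y$ by means of the identity of Lemma \ref{lem:deriv_param_sol}. Throughout, the natural scales to compare will be $s-t$ versus $T-s$ for the time increments, and $|h|^2$ versus $T-t$ for the spatial increment, together with the key pointwise bound $\G^{\m+\e/2}(r,e^{(r-t)B}x;T,y) = \gg((\m+\e/2)\mathcal{C}(T-r),y-e^{(T-t)B}x)\le C(T-t)^{Q/2}(T-r)^{-Q/2}\G^{\m+\e}(t,x;T,y)$, which follows from the monotonicity of $\mathcal{C}$ in \eqref{eq:Ct} and the standard scaling $\det\mathcal{C}(u)\sim u^{Q}$.

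For \eqref{hold3param} I split into the two cases $|h|^2\le T-t$ and $|h|^2>T-t$. In the first, the one-dimensional mean-value theorem along $\mathbf{e}_k$ produces a midpoint $\bar h$ with $|\bar h|\le|h|$ such that the increment equals $h\,\p_{ijk}\param(t,x+\bar h\mathbf{e}_k;T,y)$, and the third-order Gaussian bound $|\p_{ijk}\param(t,z;T,y)|\le C(T-t)^{-3/2}\G^{\m+\e}(t,z;T,y)$ produces a factor $|h|(T-t)^{-3/2}$, which I rewrite as $|h|^{\b/2}(T-t)^{-(2+\b)/2}$ times $|h|^{1-\b/2}(T-t)^{-(1-\b)/2}$, the latter being bounded using $|h|\le(T-t)^{1/2}$ up to a constant depending on $\TT$; the Gaussian at the midpoint is controlled by $\G^{\m+\e}(t,x+h\mathbf{e}_k;T,y)+\G^{\m+\e}(t,x;T,y)$ after a harmless enlargement of $\e$. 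In the second case, the triangle inequality combined with the second-order bound $|\p_{ij}\param(t,z;T,y)|\le C(T-t)^{-1}\G^{\m+\e}(t,z;T,y)$ at $z=x+h\mathbf{e}_k$ and $z=x$, together with the trivial inequality $1\le(|h|^2/(T-t))^{\b/2}$, yields the target.

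For \eqref{hold1param} I apply Lemma \ref{lem:deriv_param_sol} to write
\begin{equation*}
\p_i\param(s,e^{(s-t)B}x;T,y) - \p_i\param(t,x;T,y) = -\int_t^s \Bigl(\A^{(T,y)}\p_i\param + \sum_{j=1}^{d+d_1} b_{ji}\p_j\param\Bigr)\bigl(r,e^{(r-t)B}x;T,y\bigr)\,dr,
\end{equation*}
and split on $s-t\le T-s$ versus $s-t>T-s$. In the small-increment regime, I estimate the integrand by the Gaussian derivative bounds with intrinsic scales $(T-r)^{-3/2}$ for $\A^{(T,y)}\p_i\param$ and for $\p_j\param$ with $d<j\le d+d_1$, and $(T-r)^{-1/2}$ for $\p_j\param$ with $j\le d$; pulling the Gaussian outside via the key inequality above, the remaining integral $\int_t^s(T-r)^{-(3+Q)/2}\,dr$ is dominated by $C(T-s)^{-(1+Q)/2}(s-t)$, and the elementary bound $(s-t)\le(s-t)^{(1+\b)/2}(T-s)^{(1-\b)/2}$, valid in this regime, produces the target rate. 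In the large-increment regime, I bound the two terms of the difference separately by the first-order derivative estimate $|\p_i\param(\cdot,\cdot;T,y)|\le C(T-\cdot)^{-1/2}\G^{\m+\e}$, pull back the Gaussian at $s$ to $t$, and exploit $(T-s)^{(1+\b)/2}\le(s-t)^{(1+\b)/2}$ to supply the missing factor.

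The proof of \eqref{hold2param} runs analogously, starting from the identity obtained by differentiating Lemma \ref{lem:deriv_param_sol} once more in $x_j$ (which commutes with $\A^{(T,y)}$ since the latter has coefficients independent of $x$), namely
\begin{equation*}
Y(\p_{ij}\param) = -\A^{(T,y)}\p_{ij}\param - \sum_{k=1}^{d+d_1} b_{ki}\p_{kj}\param - \sum_{k=1}^{d+d_1} b_{kj}\p_{ik}\param,
\end{equation*}
after which the two-case analysis is repeated with fourth-order derivative bounds for the leading term and third-order bounds for the commutator terms. The main technical obstacle will be the careful bookkeeping of the exponents of $T-r$ under the integral and of the anisotropic Gaussian ratio $(T-t)^{Q/2}/(T-r)^{Q/2}$, in particular ensuring that the degenerate-direction derivatives $\p_j\param$ for $d<j\le d+d_1$ — which carry higher intrinsic order — are matched by the block-triangular structure \eqref{B} of $B$, so that the sums in $j$ and $k$ only range up to $d+d_1$ and thus produce integrable singularities consistent with the target exponents.
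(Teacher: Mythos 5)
Your proposal follows the paper's own proof in all essentials: the time increments \eqref{hold1param}--\eqref{hold2param} are handled via the integral identity of Lemma \ref{lem:deriv_param_sol} (and, for \eqref{hold2param}, its once-differentiated version, which the paper leaves implicit as ``analogous arguments''), the factor $\G^{\m+\e}(r,e^{(r-t)B}x;T,y)$ is pulled back to $\G^{\m+\e}(t,x;T,y)$ at the price of the ratio $(T-t)^{Q/2}/(T-r)^{Q/2}$, and the spatial increment \eqref{hold3param} uses the mean-value theorem with a third-order Gaussian bound when $h^2\le T-t$ and triangle inequality with the second-order bound otherwise. Two harmless cosmetic differences: you split $s-t\lessgtr T-s$ where the paper interpolates the integral $\int_t^s(T-r)^{-3/2}dr$ in one line, and your intermediate bound ``$C(T-s)^{-(1+Q)/2}(s-t)$'' for $\int_t^s(T-r)^{-(3+Q)/2}dr$ should read $C(T-s)^{-(3+Q)/2}(s-t)$ (the form you actually need to apply $(s-t)\le(s-t)^{(1+\b)/2}(T-s)^{(1-\b)/2}$), an evident typo that does not affect the argument.
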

\begin{proof}
We first consider \eqref{hold1param}.
By Lemma \ref{lem:deriv_param_sol} we have
\begin{align*}
\p_i\param(s,e^{(s-t)B}x;T,y)-\p_i\param(t,x;T,y)
= - \int_t^s \Big( \A^{(T,y)}\p_i\param(r,e^{(r-t)B}x;T,y) + \sum_{j=1}^{d+d_1}
b_{ji}\p_j\param(r,e^{(r-t)B}x;T,y)\Big)dr.
\end{align*}
Therefore, by boundedness of the coefficients of $\A^{(T,y)}$ and the estimates 
{of Proposition \ref{prop:gaussian_estimates}}, we obtain
\begin{align*}
  \big|\p_i\param(s,e^{(s-t)B}x;T,y)-\p_i\param(t,x;T,y)\big| &\leq \int_t^s
  \frac{C}{(T-r)^\frac{3}{2}}\G^{\m+\e}(r,e^{(r-t)B}x;T,y)dr\\
   &\leq \int_t^s \frac{C}{(T-r)^\frac{3}{2}}dr\(\frac{T-t}{T-s}\)^{{Q}/{2}}\G^{\m+\e}(t,x;T,y)
\intertext{(for any $\b\le 1$)}
&\leq C \frac{(s-t)^{\frac{1+\beta}{2}}}{(T-s)^{1+\frac{\b}{2}}}\(\frac{T-t}{T-s}\)^{{Q}/{2}}
\G^{\m+\e}(t,x;T,y).
\end{align*}
The proof of \eqref{hold2param} is based on analogous arguments.

We finally prove \eqref{hold3param}. 
As for \eqref{dercampidritti}, we first consider the case $T-t\geq h^2$. By the mean-value
theorem, there exists a real $\bar{h}$ with $|\bar{h}|\leq |h|$ such that
\begin{align}
\left| \p_{ij}\param (t, x+ h \mathbf{e}_k; T, y)- \p_{ij}\param (t,x; T, y)\right| &=
|h|\left|\p_{ijk}\param(t, x+\bar h \mathbf{e}_k ; T, y)\right| \intertext{(by estimate
\eqref{derparam})} & \leq C \frac{|h|}{({T-t})^{\frac{3}{2}}}\G^{\m+\e}(t, x+\bar h \mathbf{e}_k
;T,y) \intertext{(since $ T-t\geq h^2$ and by standard estimates on $\G^{\m+\e}(t, x+\bar h
\mathbf{e}_k ;T,y)$)} &\leq C \frac{|h|^{\b}}{({ T-t})^{1+\frac{\b}{2}}}\(\G^{\m+\e}(t, x+ h
\mathbf{e}_k;T,y)+\G^{\m+\e}(t,x;T,y)\).
\end{align}
We now consider $ T-t< h^2$. Employing triangular inequality and estimate \eqref{derparam} yields 
\begin{align*}
\left| \p_{ij}\param (t,  x+ h \mathbf{e}_k ; T, y)- \p_{ij}\param (t,x; T, y)\right| &\leq
\frac{C}{{T-t}}\(\G^{\m+\e}(t, x+ h \mathbf{e}_k;T,y)+\G^{\m+\e}(t,x;T,y)\) \intertext{(since $
T-t< h^2$)} &\leq C \frac{|h|^{\b}}{({ T-t})^{1+\frac{\b}{2}}}\(\G^{\m+\e}(t, x+ h
\mathbf{e}_k;T,y)+\G^{\m+\e}(t,x;T,y)\).
\end{align*}
This concludes the proof of \eqref{hold3param}.
\end{proof}

\section{Cauchy problem}\label{sec:Cau}
We consider the Cauchy problem
\begin{equation}\label{eq:Cauchy_prob}
 \begin{cases}
  \A u+Yu = f &\text{on }\mathcal{S}_T,\\
  u(T,\cdot)= g &\text{on }\R^N.
\end{cases}
\end{equation}
In this section we collect a few results for \eqref{eq:Cauchy_prob} that follow from our main
Theorems \ref{main} and \ref{ta1}. For sake of brevity, we omit the proofs that are based on
rather standard arguments.
\begin{assumption}\label{growholdloc}
$f$ is measurable function on $\mathcal{S}_T$, $g\in C(\rn)$ 
and there exists $\b>0$ such that, for a.e. $t\in[0,\TT]$:
\begin{itemize}
\item[a)] there exists a positive constant $C$ 
such that 
\begin{align}\label{ae10}
 |f(t,x)|+|g(x)|&\leq C e^{C |x|^2},\qquad x\in\rn;
\end{align}
\item[b)] for every compact subset $K$ of $\rn$, there exists $C_{K}>0$ such that
\begin{equation}
 |f(t,x)-f(t,y)|\leq C_{K}|x-y|^\b_B, \qquad 
 x,y\in K.
\end{equation}
\end{itemize}
\end{assumption}
\begin{proposition}\label{main_bis}
Under Assumptions \ref{coer}, \ref{ass:hypo}, \ref{ass:regularity} and \ref{growholdloc}, there
exists $T>0$ such that the function
\begin{equation}
 u(t,x):=\int_{\R^N}\sol(t,x;T,y)g(y)dy-\int_t^T\int_{\R^N} p(t,x;s,y)f(s,y)dy ds 
\end{equation}
belongs to $C_{B,\text{\rm loc}}^{2,\a}(\mathcal{S}_T)\cap C([0,T]\times\rn)$ and is the unique
solution of the Cauchy problem \eqref{eq:Cauchy_prob}, in the sense of Definition \ref{solint},
satisfying the growth estimate \eqref{ae10} for some positive constant $C$.
\end{proposition}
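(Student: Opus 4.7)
The plan is to split the candidate solution as $u = u_1 - u_2$ where
\begin{equation}
u_1(t,x):=\int_{\R^N}\sol(t,x;T,y)g(y)dy, \qquad u_2(t,x):=\int_t^T\int_{\R^N} p(t,x;s,y)f(s,y)dy\,ds,
\end{equation}
and treat each term separately. Because $g$ and $f$ only have Gaussian growth $e^{C|x|^2}$, one must first choose $T>0$ small enough (depending on $C$, $\m$, $B$) so that the Gaussian weights $\G^{\m+\e}(t,x;T,y)$ from Theorem \ref{main} absorb the growth of $g$ and of $f(s,\cdot)$ uniformly in $s\in[t,T]$; concretely, $T-t$ must be less than a fraction of the inverse of $C$, as is standard for Kolmogorov-type kernels via the explicit expression of $\mathcal{C}(T-t)$ in \eqref{eq:Ct}. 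Under this smallness, the integrals defining $u_1$, $u_2$, together with their derivatives in $x_1,\dots,x_d$, are absolutely convergent by \eqref{eq:gaussian_1}--\eqref{eq:gaussian_3}, and satisfy Gaussian growth estimates of the same type as \eqref{ae10}.

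For $u_1$, I would differentiate under the integral sign up to order two in the $x_i$, $i\leq d$, using \eqref{eq:gaussian_2}--\eqref{eq:gaussian_3} as integrable dominants, and use the integral representation \eqref{eq:integral_sol} for $\sol(\cdot,\cdot;T,y)$ together with Fubini to conclude that $u_1$ itself satisfies \eqref{eq:integral_sol} with zero source on $\mathcal{S}_T$, i.e.\ is a strong Lie solution of $\A u_1 + Y u_1=0$ in the sense of Definition \ref{solint}. The terminal condition $u_1(T,\cdot)=g$ follows from property ii) of Definition \ref{fund} via a standard approximation argument (truncate $g$ and use dominated convergence with Gaussian bounds). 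The local $C_B^{2,\a}$ regularity of $u_1$ on any strip $\mathcal{S}_\t$, $\t<T$, is inherited from the corresponding regularity of $\sol(\cdot,\cdot;T,y)$ provided by Theorem \ref{ta1}, integrated against $g$ with the Gaussian weight.

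For $u_2$ the argument mirrors Section \ref{sec:parametrix}, treating the source as a Duhamel-type convolution in time against $p$. The bound \eqref{eq:gaussian_1} and Assumption \ref{growholdloc}(a) give continuity of $u_2$ on $[0,T]\times\R^N$ with $u_2(T,\cdot)=0$. First and second $x_i$-derivatives exist by differentiating under the integral, using \eqref{eq:gaussian_2}--\eqref{eq:gaussian_3}; as in the classical parametrix construction, the second derivative requires the add-and-subtract device
\begin{equation}
\p_{ij}u_2(t,x)=\int_t^T\int_{\R^N}\p_{ij}p(t,x;s,y)\bigl(f(s,y)-f(s,e^{-(s-t)B}x)\bigr)dy\,ds + \text{(boundary term)},
\end{equation}
so that the local intrinsic H\"older continuity of $f$ in $y$ (Assumption \ref{growholdloc}(b)) compensates the non-integrable singularity in $s-t$, yielding $u_2\in C^{2,\a}_{B,\mathrm{loc}}(\mathcal{S}_T)$. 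The equation $\A u_2 + Y u_2 = f$ in the integral sense of Definition \ref{solint} then follows from Lemma \ref{leib} applied to the inner potential $\int_{\R^N}p(t,x;s,y)f(s,y)\,dy$ and a routine swap of integrals, exactly as in Step 1 of the proof of Theorem \ref{main}.

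The main obstacle — and the only step that departs from the previous sections — is uniqueness. For this I would consider the difference $v$ of two solutions with the same data; $v$ satisfies the homogeneous equation with $v(T,\cdot)=0$ and growth $|v(t,x)|\le Ce^{C|x|^2}$. Then apply a Gaussian maximum principle adapted to the degenerate Kolmogorov setting (as developed in the parametrix literature, e.g.\ via comparison with $\G^{\bar\m}$-type barriers using the lower bound \eqref{eq:gaussian_4}): on a sufficiently small time horizon one controls $v$ by its terminal values, so $v\equiv 0$, and the result is iterated if necessary. Alternatively, one can represent any such $v$ via the fundamental solution by applying the integral identity \eqref{eq:integral_sol} to $v$ against $p$ and passing to the limit $t\uparrow T$ using the Gaussian bounds. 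Either route makes essential use of the two-sided estimates of Theorem \ref{main}; this is where the subtlety of working with only a.e.\ Lie-differentiable solutions (cf.\ Remark \ref{rem:almost_surely}) must be handled carefully, by exploiting the absolute continuity of $v$ along integral curves of $Y$ to justify the limiting argument.
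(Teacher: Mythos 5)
The paper itself gives no proof of Proposition \ref{main_bis}; it explicitly declares that the proofs in Section \ref{sec:Cau} are omitted as ``rather standard.'' So there is no internal argument to compare against, and the question is whether your outline would actually close.

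Your treatment of existence and regularity follows the standard architecture and would work: the linearity split $u=u_1-u_2$, shrinking $T$ so that the Gaussian factor $\G^{\m+\e}$ absorbs the $e^{C|x|^2}$ growth, differentiation under the integral sign with \eqref{eq:gaussian_1}--\eqref{eq:gaussian_3} as dominants, the add-and-subtract device with the frozen value of $f$ along the integral curve of $Y$ to recover integrability of the second-derivative singularity via Assumption \ref{growholdloc}(b), and a Lemma~\ref{leib}-type argument to verify the Lie-integral identity \eqref{eq:integral_sol}. One small slip: the anchor for the add-and-subtract must be $f\big(s,e^{(s-t)B}x\big)$, not $f\big(s,e^{-(s-t)B}x\big)$, since the parametrix $\param(t,x;s,\cdot)$ concentrates at $e^{(s-t)B}x$ (compare with \eqref{eq:param} and the increment $|x-e^{-(T-t)B}y|_B$ in \eqref{eq:ste101}, which is the same point written in the other set of variables).

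The uniqueness step is where the proposal has a genuine gap. The lower Gaussian bound \eqref{eq:gaussian_4} asserts strict positivity of $p$; it is the wrong ingredient for a maximum-principle argument in the Gaussian-growth class, which requires an \emph{upper} barrier (super-solution), not a lower bound on the kernel. The standard route is a Tikhonov/Friedman-type weak maximum principle for $\A+Y$ on exhausting cylinders, with a barrier such as $w(t,x)=\exp\big(k|x|^2/(\gamma-t)\big)$ for suitable $k,\gamma$, which dominates the admissible growth and is a super-solution on a short time interval; then iterate. Your alternative route (representation of any solution $v$ against $p$ and passing $t\uparrow T$) would require a Green's/duality identity and hence an adjoint fundamental solution for the forward variables, which the paper does not construct; it is not a ``routine swap of integrals'' in the present setting where solutions are only a.e.\ differentiable along $Y$. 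Either fix the barrier argument by dropping the appeal to \eqref{eq:gaussian_4}, or develop the adjoint machinery explicitly.
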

The following result contains further useful properties that allow to view the fundamental
solution as the transition probability density of a Markovian process.
\begin{proposition}\label{main_ter}
Under the assumptions of Theorem \ref{main} we have:
\begin{itemize}
\item[i)] the Chapman-Kolmogorov identity
\begin{equation}
 \sol(t,x;\T,y)=\int_{\R^N}\sol(t,x;s,\y)\sol(s,\y;\T,y)d\y, \qquad t< s< \T,\ x,y\in\R^N;
\end{equation}
\item[ii)] if the zeroth order coefficient $a$ of $\A$ is constant, i.e. $a(t,x)= \bar{a}$, then
\begin{equation}
  \int_{\R^N}p(t,x;\T,y)dy=e^{\bar{a}(\T-t)},\qquad t <\T,\ x\in\R^N.
\end{equation}
\end{itemize}
\end{proposition}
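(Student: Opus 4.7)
The plan is to deduce both identities from the uniqueness of strong Lie solutions to the Cauchy problem \eqref{eq:Cauchy_prob} within the exponentially-growing class, as provided by Proposition \ref{main_bis}: in each case I will exhibit two candidate solutions to the same backward terminal-value problem on a suitable strip and invoke uniqueness.

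For part i), fix $(T,y)\in\mathcal{S}_{\TT}$ and $s\in\,]0,T[$, and compare on $\mathcal{S}_{s}$ the two functions
\begin{equation}
v(t,x):=\sol(t,x;T,y),\qquad w(t,x):=\int_{\R^N}\sol(t,x;s,\eta)\,\sol(s,\eta;T,y)\,d\eta.
\end{equation}
That $v$ solves $\A u+Yu=0$ on $\mathcal{S}_s$ in the sense of Definition \ref{solint} is immediate from Definition \ref{fund}. For $w$, I would write the integral equation \eqref{eq:integral_sol} for $\sol(\cdot,\cdot;s,\eta)$ and swap the $\eta$-integral with the time integral by Fubini, which is legitimate because the Gaussian estimate \eqref{eq:gaussian_1} together with the boundedness of $\eta\mapsto\sol(s,\eta;T,y)$ (also from \eqref{eq:gaussian_1}) provide an integrable dominant; likewise I would differentiate under the integral in $x_i,x_{ij}$ using \eqref{eq:gaussian_2}--\eqref{eq:gaussian_3}. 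The terminal value $w(s,x)=p(s,x;T,y)$ is obtained by applying property ii) of Definition \ref{fund} with $g(\eta)=\sol(s,\eta;T,y)$, which is bounded and continuous. Since both $v$ and $w$ inherit the Gaussian bound \eqref{eq:gaussian_1}, they satisfy the growth condition \eqref{ae10}, and uniqueness from Proposition \ref{main_bis} forces $v\equiv w$.

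For part ii), set $u_1(t,x):=\int_{\R^N}\sol(t,x;T,y)dy$ and $u_2(t,x):=e^{\bar{a}(T-t)}$. A direct computation gives $Yu_2=\p_t u_2=-\bar{a}u_2$ and $\A u_2=\bar{a}u_2$ (since all spatial derivatives of $u_2$ vanish), so $u_2$ solves $\A u+Yu=0$ classically, and $u_2(T,\cdot)=1$. For $u_1$, differentiation under the integral sign and the Fubini swap in \eqref{eq:integral_sol} are again justified by \eqref{eq:gaussian_1}--\eqref{eq:gaussian_3}, yielding
\begin{equation}
u_1(s,e^{(s-t)B}x)-u_1(t,x)=-\int_t^s\A u_1(\tau,e^{(\tau-t)B}x)\,d\tau
\end{equation}
for $t<s<T$; the terminal condition $u_1(t,x)\to 1$ as $(t,x)\to(T,y)$ follows from property ii) of Definition \ref{fund} applied to $g\equiv 1\in C_b(\R^N)$. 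Finally $u_1$ is bounded, since $\int_{\R^N}\Gamma^{\mu+\eps}(t,x;T,y)dy=1$ as a Gaussian density. Uniqueness in Proposition \ref{main_bis} again gives $u_1=u_2$.

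The main obstacle in either step is purely technical: verifying that the integrated kernels $w$ and $u_1$ are genuine strong Lie solutions, which requires justifying the interchange of $\eta$-integration with both spatial differentiation and with the integral along the characteristic curves of $Y$. All of these reductions are standard applications of dominated convergence and Tonelli, controlled by the explicit Gaussian bounds of Theorem \ref{main}; the time singularities $(T-t)^{-1}$ and $(T-t)^{-1/2}$ are integrable over any closed subinterval of $[0,T[$, so no new ideas beyond those already developed are required.
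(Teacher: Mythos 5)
Your argument is correct and takes the expected route: both identities are reduced to the uniqueness statement of Proposition \ref{main_bis} for the terminal-value problem, which is precisely the kind of ``standard argument'' the paper invokes when it omits the proof. For part i) you exhibit $v=\sol(\cdot,\cdot;T,y)$ and $w=\int\sol(\cdot,\cdot;s,\eta)\sol(s,\eta;T,y)\,d\eta$ as solutions on $\mathcal{S}_s$ with the same bounded continuous terminal datum $\sol(s,\cdot;T,y)$, and for part ii) you compare $\int\sol(\cdot,\cdot;T,y)\,dy$ with the explicit solution $e^{\bar a(T-t)}$; in each case the upper Gaussian bound \eqref{eq:gaussian_1} combined with the Chapman--Kolmogorov identity for $\G^{\m+\e}$ gives the global bound needed to place both candidates in the uniqueness class \eqref{ae10}.

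Two small points worth making explicit. First, the formulation of Proposition \ref{main_bis} reads ``there exists $T>0$,'' which in principle could restrict uniqueness to a short time horizon; for bounded terminal data (as in both applications here) the representation integral converges on any horizon up to $T_0$, so uniqueness indeed holds on all of $\mathcal{S}_s$ and $\mathcal{S}_T$. You rely on this silently, and it is worth one line. Second, to invoke uniqueness in the sense of Definition \ref{solint} you need $w$ (and $u_1$) to genuinely be strong Lie solutions, i.e.\ to have $\p_{x_i}$ and $\p_{x_ix_j}$ derivatives in $L^1_{\rm loc}([0,s[;C_b)$ and to satisfy the integral identity \eqref{eq:integral_sol}; you correctly flag that this follows by differentiating under the integral sign and swapping the $\eta$-integral with the time integral, with the dominants supplied by \eqref{eq:gaussian_1}--\eqref{eq:gaussian_3} and the boundedness of $\sol(s,\cdot;T,y)$. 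The singular factor $(s-\tau)^{-1}$ in the second-derivative bound is harmless because the time integral in \eqref{eq:integral_sol} stays away from the endpoint $s$. These verifications are the genuine technical content, and while you leave them at the level of a sketch, the estimates in hand do close them.
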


\appendix
\section{Gaussian estimates}\label{appendix:estimates_parametrix}

We prove Gaussian estimates that are crucial in the analysis of Sections \ref{sec:parametrix} and
\ref{regolaritasoluzione}.
Here we follow the ideas in \cite[Section 3]{difpas}, but with some technical difference. Namely,
in the aforementioned paper the Kolmogorov operator acts on the forward variables of
$\G^{(s,v)}(t,x;T,y)$, whereas here
we consider $\A+Y$ acting on the backward variables $(t,x)$. 
This has an impact on the spatial derivatives, which contain additional factors that require a
careful analysis.
%

Throughout the appendix we suppose that Assumptions \ref{coer}, \ref{ass:hypo} and
\ref{ass:regularity} are satisfied and fix $(s,v)\in\mathcal{S}_{\TT}$. Denoting by $B_0$ the
matrix $B$ with null $\ast$-blocks, we define {the $N\times N$ matrices}
\begin{align}
\C_0(t):=& \int_{0}^{t} e^{(t-\t) B_0}
\begin{pmatrix} I_{d} & 0 \\0 & 0\end{pmatrix}
e^{(t-\t) B_0^*}d\t, 
\\
\C_0^{(s,v)}(t,T):=& \int_{t}^{T} e^{(T-\t) B_0} A^{(s,v)} (\t) e^{(T-\t) B_0^*}d\t, 
\end{align}
with $A^{(s,v)}$ as defined in \eqref{aao}.
{As an immediate consequence of Assumption \ref{coer} we can compare the quadratic forms associated to $\C^{(s,v)}$ (as in \eqref{cao}), $\C_0^{(s,v)}$ with $\C(T-t)$ (as in \eqref{eq:Ct}), $\C_0(T-t)$, respectively:}
\begin{align}
\frac{1}{\m}\C(T-t)\leq\C^{(s,v)}(t,T)\leq\m\C(T-t), \label{confronto1}\\
\frac{1}{\m}\C_0(T-t)\leq\C_0^{(s,v)}(t,T)\leq\m\C_0(T-t), \label{confronto0}
\end{align}
for any $ t \leq T
$. 
Moreover, an asymptotic comparison near 0 of $\C^{(s,v)}$ and $\C_0^{(s,v)}$ holds:
\begin{lemma}\label{asint1}
There exist two positive constants $C$ and $\d$, only dependent on $\m$ and $B$, such that 
\begin{align}
\frac{1}{2\m}\C_0(T-t)&\leq \C^{(s,v)}(t,T)\leq 2\m\C_0(T-t),\\
\frac{1}{\(2\m\)^{N}}\det\C_0(T-t)&\leq \det\C^{(s,v)}(t,T)\leq \(2\m\)^{N}\det\C_0(T-t),
\end{align}
for any $0<T-t< \d$. Analogous estimates hold for $\big(\C^{(s,v)}(t,T)\big)^{-1}$.
\end{lemma}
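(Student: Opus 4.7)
The plan is to reduce the comparison to a dimensionless one via the natural anisotropic dilation associated with the block structure \eqref{B}. Define
\[
 D(\l) := \text{diag}(\l I_{d_0}, \l^3 I_{d_1}, \ldots, \l^{2\rr+1} I_{d_{\rr}}), \qquad \l>0,
\]
and observe that since the only non-zero blocks of $B_0$ are the subdiagonal $B_j$'s, a block-by-block conjugation gives $D(\l)^{-1} B_0 D(\l) = \l^{-2} B_0$. Combined with the identity $D(\l)^{-1}J_0 D(\l)^{-1} = \l^{-2} J_0$, where $J_0 := \begin{pmatrix} I_d & 0 \\ 0 & 0 \end{pmatrix}$, this yields the scaling $\C_0(h) = D(\sqrt h)\, \C_0(1)\, D(\sqrt h)$. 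Setting $h := T-t$ and $\tilde\C := D(\sqrt h)^{-1} \C^{(s,v)}(t,T) D(\sqrt h)^{-1}$, the statement reduces to showing the dimensionless bound $\tfrac{1}{2\m}\C_0(1) \le \tilde\C \le 2\m \C_0(1)$ for $h<\d$, and analogously for $\tilde \C_0$.

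After the substitution $\tau = T - h\s$ in \eqref{cao} and insertion of $D(\sqrt h)\, D(\sqrt h)^{-1}$ around the factor $A^{(s,v)}(\tau)$, one obtains
\[
 \tilde\C = \int_0^1 M_B(h,\s)\, A^{(s,v)}(T-h\s)\, M_B(h,\s)^{*}\, d\s, \qquad M_B(h,\s) := D(\sqrt h)^{-1} e^{h\s B} D(\sqrt h),
\]
and the analogous formula for $\tilde\C_0$, where by the scaling of $B_0$ the matrix $M_{B_0}(h,\s) = e^{\s B_0}$ is $h$-independent. Coercivity (Assumption \ref{coer}) gives $\m^{-1} J_0 \le A^{(s,v)}(\tau) \le \m J_0$, and combined with $\C_0(1) = \int_0^1 e^{\s B_0} J_0 e^{\s B_0^*}d\s$ this immediately yields the baseline bound $\m^{-1}\C_0(1) \le \tilde\C_0 \le \m\,\C_0(1)$.

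The key step is the perturbation estimate $\tilde\C - \tilde\C_0 = O(h)$. Decompose $B = B_0 + N$ with $N$ collecting the $\ast$-blocks, which sit in block-positions $(p,q)$ with $q\ge p$. Since conjugation by $D(\l)$ scales the $(p,q)$-block by $\l^{2(q-p)}$,
\[
 D(\sqrt h)^{-1}(h\s B)\,D(\sqrt h) = \s B_0 + h\s N_{(0)} + h^2 \s N_{(1)} + \cdots + h^{\rr+1} \s N_{(\rr)},
\]
where $N_{(k)}$ denotes the $k$-th block-superdiagonal of $N$. Exponentiating and using local Lipschitz continuity of the matrix exponential yields $\|M_B(h,\s) - e^{\s B_0}\|\le C_B\, h$ uniformly in $\s\in[0,1]$, for $h$ in a fixed compact neighbourhood of $0$. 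Combined with the uniform bounds on $A^{(s,v)}$ and $M_{B_0}$, this gives the symmetric-matrix bound $\|\tilde\C - \tilde\C_0\|\le C_{\m, B}\, h$. I expect this block-by-block commutator computation, with care taken to track the $h$-powers, to be the main technical step; everything else is a straightforward perturbation of the exactly homogeneous case $B=B_0$.

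To conclude, observe that $\C_0(1)$ is positive definite by the H\"ormander condition (Assumption \ref{ass:hypo}); let $\l_* > 0$ be its smallest eigenvalue and set $\d := \l_*/(2\m C_{\m, B})$. Then for $h < \d$ one has $\tilde\C \ge \tilde\C_0 - C_{\m, B} h\, I \ge \m^{-1}\C_0(1) - \tfrac{1}{2\m}\C_0(1) = \tfrac{1}{2\m}\C_0(1)$, and the upper bound is obtained analogously (shrinking $\d$ if needed). Undoing the $D(\sqrt h)$ conjugation produces the stated quadratic form inequalities for $\C^{(s,v)}(t,T)$; the determinant inequalities then follow from the monotonicity of $\det$ on symmetric positive matrices, and the analogous bounds on $(\C^{(s,v)}(t,T))^{-1}$ follow by inverting the quadratic form inequality (which reverses the order of the two sides).
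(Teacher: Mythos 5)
Your proof is correct, and it reconstructs in full detail the argument that the paper simply delegates to \cite[Lemma 3.1]{lanpol}: normalization by the anisotropic dilation $D(\sqrt{T-t})$, comparison to the $h$-independent quantity $\tilde\C_0 = \int_0^1 e^{\s B_0}A^{(s,v)}(T-h\s)e^{\s B_0^*}d\s$ via coercivity, and an $O(h)$ perturbation estimate for the off-diagonal contribution of $B-B_0$ after conjugation, closed using the positive definiteness of $\C_0(1)$. Since the paper's proof is essentially a one-line citation plus the observation that time-dependence of $A$ is harmless, this is the same approach, just written out.
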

\begin{proof}
It follows from the same arguments of \cite[Lemma 3.1]{lanpol}: the proof is only based on the
properties of the matrices $A$ and $B$, and it is not relevant whether $A$ has constant or
time-dependent entries.
\end{proof}

\begin{remark}\label{remark:stimedet}
We note that $|\cdot|_B$ is homogeneous with respect to the family of dilations defined by the matrices 
\begin{equation}\label{dilations}
D(\lambda):=\text{diag}(\lambda I_{d},\lambda^3 I_{d_1},\dots,\lambda^{2\rr+1}I_{d_{\rr}}), \qquad \lambda\geq 0.
\end{equation}
In \cite[Proposition 2.3]{lanpol} it is proved that
\begin{equation}\label{eq:covmatomog}
\C_0(t)= D(\sqrt{t})\C_0(1)D(\sqrt{t}), \qquad t\geq0.
\end{equation}
Therefore, 
for $0<T-t<\d$ with $\delta$ as in Lemma \ref{asint1},
\begin{equation}
\frac{(T-t)^Q}{\(2\m\)^{N}}\det\C_0(1)\leq \det\C^{(s,v)}(t,T)\leq \(2\m\)^{N}(T-t)^Q\det\C_0(1).
\end{equation}
\end{remark}

To compute the spatial derivatives of $\G^{(s,v)}(t,x;T,y)$ it is useful noticing that
\begin{equation}
\G^{(s,v)}(t,x;T,y)=\gg\big( H^{(s,v)}(t,T), e^{-(T-t)B}y-x \big),\qquad
{(T,y)\in\mathcal{S}_{\TT}, \ (t,x)\in\SS_T,}
\end{equation}
where
\begin{equation}\label{H}
H^{(s,v)}(t,T):= e^{-(T-t)B}\C^{(s,v)}(t,T)e^{-(T-t)B^*}.
\end{equation}
Since $\C^{(s,v)}(t,T)$ is symmetric positive definite and $e^{-(T-t)B}$ is non-singular, then
$H^{(s,v)}(t,T)$ is symmetric and positive definite for every $0\leq t < T$. 

In order to give estimates on the matrix $H^{(s,v)}$ we need to study the elements of $e^{tB}$.
We recall the block partition \eqref{B} of the matrix $B$: 
for $h,k=0,\dots,\rr $, we denote the $d_h\times d_k$ block of $B$ by
\begin{equation}
\mathcal{Q}_{hk}:=\(b_{ij}\)_{ i=\ddd_{h-1}+1,\dots,\ddd_h\atop j=\ddd_{k-1}+1,\dots,\ddd_k},
\end{equation}
with $\ddd_{h}$ 
as in \eqref{e7}.
Note that by \eqref{B} we have
\begin{equation}\label{Qprop}
\begin{cases}
\mathcal{Q}_{hk}=0_{d_h\times d_k} &\text{if } h>k+1,\\ \mathcal{Q}_{hk}=B_h  &\text{if } h=k+1,\\
\mathcal{Q}_{hk}=\ast &\text{if } h<k+1.
\end{cases}
\end{equation}
Analogously, for $n\in\N$, we can consider the same block decomposition for $B^n$. We denote by $\mathcal{Q}_{hk}^{(n)}$ the $d_h\times d_k$ block of $B^n$.

\begin{lemma}\label{potenzeblocchi}
Let $h,k=0,\dots,\rr $ and $n\in\N$. Then 
\begin{equation}\label{eq:Qn}
\mathcal{Q}^{(n)}_{hk}=0_{d_h\times d_k}, \qquad h>k+n,
\end{equation}
{which is
$\(B^n\)_{ij}=0$
if $i\in\{\ddd_{h-1}+1,\dots,\ddd_h\}$ and $j\in\{\ddd_{k-1}+1,\dots,\ddd_k\}$.}
\end{lemma}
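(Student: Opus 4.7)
The plan is a direct induction on $n$, exploiting the lower-triangular-up-to-one-subdiagonal block structure of $B$ given in \eqref{Qprop}.

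The base case $n=1$ is precisely the first line of \eqref{Qprop}: if $h > k+1$, then $\mathcal{Q}_{hk} = 0_{d_h\times d_k}$. For the inductive step, I would assume \eqref{eq:Qn} holds for some $n \geq 1$ and write $B^{n+1} = B \cdot B^n$. Using block matrix multiplication along the partition $(d_0, d_1, \dots, d_{\rr})$ of $\{1,\dots,N\}$, one has
\begin{equation}
\mathcal{Q}^{(n+1)}_{hk} = \sum_{j=0}^{\rr} \mathcal{Q}_{hj}\, \mathcal{Q}^{(n)}_{jk}.
\end{equation}
Fix $h > k+n+1$ and show that each summand vanishes by splitting the index $j$ into two ranges. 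If $j \leq h-2$, then $h > j+1$ and the first factor $\mathcal{Q}_{hj}$ is zero by \eqref{Qprop}. If instead $j \geq h-1$, then $j \geq h-1 > k+n$, so the second factor $\mathcal{Q}^{(n)}_{jk}$ is zero by the inductive hypothesis. In both cases the product is $0_{d_h\times d_k}$, hence $\mathcal{Q}^{(n+1)}_{hk} = 0_{d_h\times d_k}$, completing the induction.

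There is no real obstacle here: the statement is a purely combinatorial consequence of the block shape of $B$, and the key observation is simply that whenever a subscript index $j$ fails to kill one of the two factors in the block-product decomposition, it must kill the other. The translation to the scalar statement $(B^n)_{ij} = 0$ for $i \in \{\bar{d}_{h-1}+1, \dots, \bar{d}_h\}$ and $j \in \{\bar{d}_{k-1}+1, \dots, \bar{d}_k\}$ is immediate from the definition of $\mathcal{Q}^{(n)}_{hk}$ as the corresponding block of $B^n$.
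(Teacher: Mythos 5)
Your proof is correct and takes essentially the same approach as the paper: induction on $n$, block multiplication, and splitting the summation index into two ranges so that in each range one of the two block factors vanishes. The only cosmetic difference is that you factor $B^{n+1} = B \cdot B^n$ while the paper factors $B^{n+1} = B^n \cdot B$, which simply swaps which factor is killed in each range.
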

\begin{proof}We proceed by induction on $n$. The case of $n=1$ is obvious (see \eqref{Qprop}).
Now we assume that  \eqref{eq:Qn}  holds for a certain $n\in\N$.
For $h>k+n+1$ we have 
\begin{equation}
\mathcal{Q}^{(n+1)}_{hk}
= \sum_{m=0}^{\rr } \mathcal{Q}^{(n)}_{hm}\mathcal{Q}_{mk}.
\end{equation}
If $m < h- n$, then $\mathcal{Q}^{(n)}_{hm}=0_{d_h\times d_m}$ by inductive hypothesis; if
$m\geq h- n $, then $m>k+1$ and $\mathcal{Q}_{mk}=0_{d_m\times d_k}$. Therefore
$\mathcal{Q}^{(n+1)}_{hk}=0_{d_h\times d_k}$.
\end{proof}
\begin{lemma}\label{terminiexp}
Let $h,k=1,\dots,\rr$  such that $h-k=: n \in \N$. For any $i\in\{\bar{d}_{h-1}+1,\dots,\bar{d}_h\}$ and
$j\in\{\bar{d}_{k-1}+1,\dots,\bar{d}_k\}$ we have 
\begin{equation}
\(e^{tB}\)_{ij}=O(t^n),\quad\text{as }t\to0.
\end{equation}
\end{lemma}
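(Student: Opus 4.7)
My plan is to use the exponential power series together with Lemma \ref{potenzeblocchi}. Writing
\[
\bigl(e^{tB}\bigr)_{ij} = \sum_{m=0}^{\infty} \frac{t^m}{m!}\bigl(B^m\bigr)_{ij},
\]
I will argue that the first $n$ terms of this series vanish for the index range in question. Indeed, for $i\in\{\bar d_{h-1}+1,\dots,\bar d_h\}$ and $j\in\{\bar d_{k-1}+1,\dots,\bar d_k\}$ with $h-k=n$, the entry $(B^m)_{ij}$ lies in the block $\mathcal{Q}^{(m)}_{hk}$. By Lemma \ref{potenzeblocchi}, this block is identically zero whenever $h>k+m$, i.e.\ whenever $m<n$. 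Hence $(B^m)_{ij}=0$ for $m=0,1,\dots,n-1$.

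Consequently the power series truncates at the bottom, giving
\[
\bigl(e^{tB}\bigr)_{ij} = \sum_{m=n}^{\infty} \frac{t^m}{m!}\bigl(B^m\bigr)_{ij} = t^n \sum_{m=0}^{\infty} \frac{t^m}{(m+n)!}\bigl(B^{m+n}\bigr)_{ij},
\]
and since the remaining series converges (uniformly on any bounded interval of $t$, by the same bound used for the entire exponential), the $(i,j)$-entry is $O(t^n)$ as $t\to 0$. There is essentially no obstacle here: all the combinatorial work is already packaged in Lemma \ref{potenzeblocchi}, and this lemma is a clean corollary obtained by inserting those vanishings into the Taylor expansion of the matrix exponential.
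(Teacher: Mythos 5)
Your proof is correct and follows the same route as the paper: apply Lemma \ref{potenzeblocchi} to kill the terms $(B^m)_{ij}$ for $m<n$ in the power series of $e^{tB}$, then factor out $t^n$. The paper phrases the conclusion as $(e^{tB})_{ij}=\frac{t^n(B^n)_{ij}}{n!}+O(t^{n+1})$, but the argument is identical.
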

\begin{proof}
From Lemma \ref{potenzeblocchi} we have that $(B^m)_{ij}=0$ for every $m=0,\dots,n-1$, since
$\mathcal{Q}^{(m)}_{hk}=0_{d_h \times d_k}$ for $h-k=n>m$. Therefore
\begin{equation}
(e^{tB})_{ij}=\frac{t^n(B^n)_{ij}}{n!} + O(t^{n+1}),\quad\text{as } t\to0.
\end{equation}
\end{proof}
\begin{lemma}\label{lemma:stimeH}
There exists a positive constant $C$ that only depends on $\m$, $B$ and $T_0$
such that, for every $i,j=1,\dots,d$ and $k=d +1,\dots,d + d_1$, 
\begin{align}
\big|\big( H^{(s,v)}(t,T)^{-1} x\big)_i\big|& \leq\frac{C}{\sqrt{T-t}} \big| D( \sqrt{T-t})^{-1}
e^{(T-t)B}x\big|, \label{stimeH1}\\ \big|(H^{(s,v)}(t,T)^{-1}\big)_{ij}\big|&
\leq\frac{C}{T-t}, \label{stimeH2}\\ \big|\big( H^{(s,v)}(t,T)^{-1} x\big)_k\big|&
\leq\frac{C}{(T-t)^{\frac{3}{2}}} \big| D( \sqrt{T-t})^{-1}
e^{(T-t)B}x\big|,\label{stimeH3}\\ \big|\big(H^{(s,v)}(t,T)^{-1}\big)_{ik}\big|&
\leq\frac{C}{(T-t)^2}, \label{stimeH4}
\end{align}
for any $0<T<T_0$ and $(t,x)\in\SS_T$.
\end{lemma}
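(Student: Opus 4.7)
The starting point is the identity
\[
H^{(s,v)}(t,T)^{-1} = e^{(T-t)B^{*}} \C^{(s,v)}(t,T)^{-1} e^{(T-t)B},
\]
which reduces every estimate to bounds on $\C^{(s,v)}(t,T)^{-1}$ combined with the block decay of $e^{(T-t)B}$. Writing $\lambda := \sqrt{T-t}$, Lemma \ref{asint1} together with the dilation identity $\C_{0}(T-t) = D(\lambda)\C_{0}(1)D(\lambda)$ (Remark \ref{remark:stimedet}) yields
\[
\frac{1}{2\m}\C_{0}(1) \leq D(\lambda)^{-1} \C^{(s,v)}(t,T) D(\lambda)^{-1} \leq 2\m\, \C_{0}(1),
\]
so that, setting $\widetilde{\C} := D(\lambda)^{-1}\C^{(s,v)}(t,T)D(\lambda)^{-1}$, the inverse $\widetilde{\C}^{-1}$ is uniformly bounded by a constant depending only on $\m$ and $B$. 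Reading off $\C^{(s,v)}(t,T)^{-1} = D(\lambda)^{-1}\widetilde{\C}^{-1}D(\lambda)^{-1}$ gives the key pointwise estimate
\[
\big|(\C^{(s,v)}(t,T)^{-1} v)_{j}\big| \leq C\, \lambda^{-(2k(j)+1)}\, |D(\lambda)^{-1} v|,
\]
where $k(j)$ denotes the block index of $j$.

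Set $y := e^{(T-t)B} x$. The $i$-th component of $H^{(s,v)}(t,T)^{-1} x$ expands as $\sum_{j}(e^{(T-t)B})_{ji}(\C^{(s,v)}(t,T)^{-1} y)_{j}$. By Lemma \ref{terminiexp}, $(e^{(T-t)B})_{ji} = O(\lambda^{2(k(j)-h)})$ when $k(j)\geq h$ (where $h$ is the block of $i$), and is $O(1)$ otherwise. Combining with the scaled bound, each term contributes at most $C\lambda^{-(2h+1)}|D(\lambda)^{-1}y|$ when $k(j)\geq h$, and at most $C\lambda^{-(2k(j)+1)}|D(\lambda)^{-1}y|$ when $k(j)<h$. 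For $h=0$ (the case $i\in\{1,\dots,d\}$) this is already \eqref{stimeH1}, while for $h=1$ (the case $i\in\{d+1,\dots,d+d_{1}\}$) the terms with $k(j)\geq 1$ dominate and produce the $\lambda^{-3}$ bound of \eqref{stimeH3}.

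The off-diagonal estimates \eqref{stimeH2} and \eqref{stimeH4} follow by specialising $x = e_{j}$ in \eqref{stimeH1}. A parallel block-wise computation, again via Lemma \ref{terminiexp}, shows that for $j$ in block $h(j)$ one has $|D(\lambda)^{-1} e^{(T-t)B} e_{j}| \leq C\lambda^{-(2h(j)+1)}$; plugging this into \eqref{stimeH1} with $i\in\{1,\dots,d\}$ yields the bound $C\lambda^{-(2h(j)+2)}$, so $h(j)=0$ recovers \eqref{stimeH2} and $h(j)=1$ recovers \eqref{stimeH4}. The restriction $T<T_{0}$ is needed only to invoke Lemma \ref{asint1}; for $T-t$ bounded below the estimates are trivial from the uniform coercivity of $\C^{(s,v)}(t,T)$.

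The main obstacle is the careful tracking of the $\lambda$-exponents when combining the scaling identity with Lemma \ref{terminiexp}: since $\C_{0}(1)^{-1}$ is in general non-diagonal, different blocks of $\C^{(s,v)}(t,T)^{-1}v$ mix \emph{all} blocks of $v$, so the scaling must be performed globally on $D(\lambda)^{-1}v$ rather than component-wise. One must then verify, in the case $h=1$, that the contributions from the ``fast'' blocks $k(j)<h$ (where $e^{(T-t)B}$ provides no decay) cannot dominate those from $k(j)\geq h$; this is automatic because $\lambda^{-(2k(j)+1)}$ is of smaller order than $\lambda^{-(2h+1)}$ precisely when $k(j)<h$.
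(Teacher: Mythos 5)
Your proof is correct and follows essentially the same route as the paper: both hinge on the identity $H^{(s,v)}(t,T)^{-1}=e^{(T-t)B^{*}}\C^{(s,v)}(t,T)^{-1}e^{(T-t)B}$, the uniform boundedness of $D(\sqrt{T-t})\,\C^{(s,v)}(t,T)^{-1}D(\sqrt{T-t})$ via Lemma \ref{asint1} and the dilation identity \eqref{eq:covmatomog}, and the block-decay of $e^{(T-t)B}$ from Lemma \ref{terminiexp}. The only cosmetic difference is bookkeeping: the paper sandwiches $D(\sqrt{\tau})^{-1}D(\sqrt{\tau})$ between $e^{\tau B^{*}}$ and $\C^{(s,v)}(t,T)^{-1}$ so that $\big(D(\sqrt{\tau})e^{\tau B^{*}}D(\sqrt{\tau})^{-1}\big)_{in}=O(1)$ appears as a single factor, whereas you distribute the two copies of $D(\lambda)^{-1}$ coming from $\C^{(s,v)}(t,T)^{-1}$ and then pair the resulting component-wise powers of $\lambda$ with the $O(\lambda^{2(k(j)-h)})$ decay of $e^{\tau B}$; the exponent accounting is the same. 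Your observation that the ``fast'' blocks $k(j)<h$ cannot dominate, and your handling of $T-t$ bounded away from zero by coercivity alone, also match the paper's treatment (which splits at the threshold $\delta$ from Lemma \ref{asint1}).
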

\begin{proof}
We prove the first inequality. Setting $\t= T-t$, we have
\begin{align*}
\left|\( H^{(s,v)}(t,T)^{-1} x\)_i\right| & = \frac{1}{\sqrt{\t }} \left|\Big( D( \sqrt{\t })
e^{\t B^*}   \C^{(s,v)}(t,T)^{-1} e^{\t B}x \Big)_i\right|\\
&\leq \frac{1}{\sqrt{\t }} \sum_{n=1}^N \Big| \big(D(\sqrt{\t }) e^{\t B^*} D( \sqrt{\t })^{-1}\big)_{in}\Big| \left\|D( \sqrt{\t })  \C^{(s,v)}(t,T)^{-1}
D( \sqrt{\t })\right\| \left| D( \sqrt{\t })^{-1} e^{\t B}x\right|.
\end{align*}
By Lemma \ref{asint1} there exists a positive constant $\d$
such that, if $0<\t<\d$, we have
\begin{align*}
\left\|D( \sqrt{\t })  \C^{(s,v)}(t,T)^{-1} D( \sqrt{\t })\right\| &\leq\sup\limits_{|
y|=1}\<D( \sqrt{\t })  \C^{(s,v)}(t,T)^{-1} D( \sqrt{\t }) y, y\>
\\ &\leq 2\m \sup\limits_{| y|=1}\<
\C_0(\t)^{-1} D( \sqrt{\t }) y,D( \sqrt{\t }) y\>=2\m\|\C_0(1)^{-1} \|,
\end{align*}
where the last equality follows from \eqref{eq:covmatomog}.
If $\d\leq\t < \TT$, by equation \eqref{confronto0} we have
\begin{equation}
\left\|D( \sqrt{\t})  \C^{(s,v)}(t,T)^{-1} D( \sqrt{\t})\right\| \leq \m \left\|D(
\sqrt{\t}) \C(\t)^{-1} D( \sqrt{\t})\right\|,
\end{equation}
which is {bounded} by a constant that depends only on $\mu$, $\TT$ and $B$. 

In order to conclude the proof of \eqref{stimeH1}, 
we let $h_n $ be the only $h\in\{0,\dots,\rr\}$ such that
$\ddd_{h-1}+1\leq n\leq \ddd_h$. Then, by  Lemma \ref{terminiexp}, since $i\in\{1,\dots,d\}$, we obtain
\begin{equation}
\big(D( \sqrt{\t }) e^{\t B^*} D( \sqrt{\t })^{-1}\big)_{in} = D( \sqrt{\t })_{ii}\big(e^{\t
B^*}\big)_{in} D( \sqrt{\t })^{-1}_{nn} =  {{\t}^{\frac{1}{2}}} \(e^{\t B}\)_{ni} {\t^{-\frac{2h_n +1}{2}}}
=O(1) \qquad\text{as } \tau \to0.
\end{equation}

Estimate \eqref{stimeH2} follows from \eqref{stimeH1} choosing $x=\mathbf{e}_j$. 
Estimates \eqref{stimeH3} and \eqref{stimeH4} can be proved following the
same arguments, noticing that for $k=d+1,\dots,d+d_1$ we have $D(\t)_{kk}=\t^{3}$.
\end{proof}
Finally, we provide Gaussian estimates for $\G^{(s,v)}(t,x;T,y)$ and its derivatives up to the
fourth order that will be used to study the H\"older regularity of the second order derivatives of
the fundamental solution
via the representation \eqref{FS}-\eqref{termini}. 
The following result can be proved as \cite[Proposition 3.5]{difpas}.
\begin{lemma}\label{polipar1}
For every $\b\ge0$ and $\e>0$ there exists a positive constant $C$,
only dependent on $\TT,\m,B,\e$ and $\beta$, 
such that
\begin{equation}\label{polipar}
 |w_i|^{\b} \, \G^{(s,v)}(t,x;T,y) 
\leq C\G^{\m+\e}(t,x;T, y), \qquad (T,y)\in \mathcal{S}_{\TT},\ (t,x)\in\SS_T,\ i=1,\dots,N,
\end{equation}
where
\begin{equation}
 w=D(\sqrt{T-t})^{-1}\(y-e^{(T-t)B} x\).
\end{equation}
\end{lemma}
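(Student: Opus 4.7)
My plan is to reduce the inequality to showing that a certain polynomial-times-Gaussian expression in the intrinsic variable $w$ is uniformly bounded, exploiting the dilation structure \eqref{eq:covmatomog}. Setting $z := y-e^{(T-t)B}x$ and using the explicit form of the Gaussian kernel, I would write
\begin{equation}
\frac{\G^{(s,v)}(t,x;T,y)}{\G^{\m+\e}(t,x;T,y)} = \sqrt{\frac{(\m+\e)^N\det\C(T-t)}{\det\C^{(s,v)}(t,T)}}\,\exp\!\left(-\tfrac{1}{2}\bigl\<\bigl((\C^{(s,v)}(t,T))^{-1}-\tfrac{1}{\m+\e}\C(T-t)^{-1}\bigr)z,z\bigr\>\right).
\end{equation}
The matrix inequality \eqref{confronto1} immediately bounds the prefactor by a constant depending only on $\m$ and $N$. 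Likewise, \eqref{confronto1} yields $(\C^{(s,v)})^{-1}\ge \m^{-1}\C^{-1}$, so the quadratic form in the exponent is bounded above by $-\kappa\<\C(T-t)^{-1}z,z\>$ with $\kappa := \frac{\e}{2\m(\m+\e)}>0$. Hence it suffices to show that $|w_i|^\beta \exp\bigl(-\kappa\<\C(T-t)^{-1}z,z\>\bigr)$ is uniformly bounded on $\SS_T$ for $T<\TT$.

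For the small-time regime $0<T-t<\d$, I would apply Lemma \ref{asint1} with $A_0 = I_d$ (i.e., $\m=1$), which gives $\C(T-t)\le 2\C_0(T-t)$ and therefore $\C(T-t)^{-1}\ge \tfrac{1}{2}\C_0(T-t)^{-1}$. The key identity is the dilation invariance \eqref{eq:covmatomog}: since $z=D(\sqrt{T-t})w$, a direct computation yields
\begin{equation}
\<\C_0(T-t)^{-1}z,z\> = \<\C_0(1)^{-1}w,w\> \ge \lambda_{\min}\!\bigl(\C_0(1)^{-1}\bigr)\,|w|^2.
\end{equation}
So the target quantity is bounded by $|w|^\beta e^{-c|w|^2}$, which is a universal constant depending on $\beta$.

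For the complementary regime $\d\le T-t\le \TT$, both $\C(T-t)$ and its inverse are uniformly bounded on the compact interval (the matrix is continuous and positive definite for $T-t>0$), so $\<\C(T-t)^{-1}z,z\>\gtrsim |z|^2$. Moreover, since $D(\sqrt{T-t})^{-1}$ has entries bounded by a constant depending on $\d$ and $B$, we have $|w_i|\le C|z|$, whence $|w_i|^\beta e^{-c|z|^2}$ is bounded. Combining the two regimes yields \eqref{polipar}.

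The only mildly delicate point is the interplay between the asymptotic comparison of Lemma \ref{asint1} (which links $\C^{(s,v)}$ with the homogeneous $\C_0$ only as $T-t\to 0$) and the dilation rescaling \eqref{eq:covmatomog}, which is precisely what makes the weights $|w_i|$ in the intrinsic variables absorb the Gaussian decay. Splitting the argument into the small-time and the bounded-time regime, and using that Lemma \ref{asint1} is applicable with $\m=1$ because $\C$ itself arises from a constant coefficient $A_0=I_d$, bypasses this issue cleanly.
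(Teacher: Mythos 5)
Your argument is correct and is essentially the standard proof that \cite[Proposition 3.5]{difpas} (to which the paper points without giving details) relies on: write the ratio $\G^{(s,v)}/\G^{\m+\e}$ explicitly, bound the determinant prefactor and the quadratic form via \eqref{confronto1}, rescale through the dilations using \eqref{eq:covmatomog}, and reduce to the boundedness of a polynomial-times-Gaussian in the intrinsic variable $w$. Your split into a small-time regime (where Lemma \ref{asint1}, applied with the constant coefficient matrix $I_d$ and hence coercivity constant $1$, lets you pass from $\C$ to the dilation-homogeneous $\C_0$) and a bounded-time regime (where compactness of $[\d,\TT]$ gives uniform two-sided bounds on $\C(T-t)$ and on the entries of $D(\sqrt{T-t})^{-1}$) is exactly the clean way to handle the fact that \eqref{eq:covmatomog} holds only for $\C_0$, not for $\C$. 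The constant dependencies you accumulate ($\m,\e,B,\beta$ in the small-time regime, plus $\TT$ through $\d$ in the large-time regime) match the statement.
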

\begin{notation}
Let $\n=(\n_1,\dots,\n_N)\in\mathbb{N}_0^N$ be a multi-index. We define the $B$-length of $\n$ as
\begin{equation}
[\n]_B:=
 \sum_{j=0}^{\rr}(2j+1)\sum_{i=\bar{d}_{j-1}+1}^{\bar{d}_j}\n_{i}. 
\end{equation}
Moreover, as usual
 $\p^\n_x=\p_{x_1}^{\n_1}\cdots \p_{x_N}^{\n_N}.$
\end{notation}
Combining Lemmas \ref{lemma:stimeH} and \ref{polipar1} with \cite[Proposition 3.1, 3.6 and Lemma
5.2]{difpas}, some lengthy but straightforward computations show the following
\begin{proposition}\label{prop:gaussian_estimates}
We have
\begin{equation}\label{eq:upper_gaussian}
\frac{1}{\m^N}\G^{\frac{1}{\m}}(t,x;T,y) \leq \G^{(s,v)}(t,x;T,y) \leq\m^N \G^\m(t,x;T,y).
\end{equation}
{for any $(T,y)\in \mathcal{S}_{\TT}$ and $(t,x)\in\SS_T$}. Moreover, for every $\e>0$ and $\n\in
\N^N_0$ with $[\n]_B\le 4$, there exists a positive constant $C$, only dependent on $\TT, \m, B$
and $\e$,
such that 
\begin{align}
|\p_{x}^\n
\G^{(s,v)}(t,x;T,y)|&\leq\frac{C}{(T-t)^{\frac{[\n]_B}{2}}}\G^{\m+\e}(t,x;T,y),\label{derparam}\\
\left|\p_{x}^\n\G^{(s,v)}(t,x;T,y)-\p_{x}^\n\G^{(s,w)}(t,x;T,y)\right| &\leq C
\frac{|v-w|_B^\a}{(T-t)^{\frac{[\n]_B}{2}}} \G^{\m+\e}(t,x;T,y), \label{incrementiparam}
\end{align}
{for any $(T,y)\in \mathcal{S}_{\TT}$, $(t,x)\in\SS_T$} and $w\in\R^N$. 
\end{proposition}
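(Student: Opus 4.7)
The two-sided Gaussian bound \eqref{eq:upper_gaussian} follows immediately from the matrix ordering \eqref{confronto1}: the inequality $\frac{1}{\m}\C(T-t)\leq \C^{(s,v)}(t,T)\leq \m\C(T-t)$ controls both the determinant (prefactor) and the inverse (exponent) of $\C^{(s,v)}(t,T)$, and inserting these into the explicit formula $\gg(\mathcal{C},z)=(2\pi)^{-N/2}(\det\mathcal{C})^{-1/2}\exp(-\tfrac12\langle\mathcal{C}^{-1}z,z\rangle)$ with $z=y-e^{(T-t)B}x$ yields \eqref{eq:upper_gaussian} after routine bookkeeping of the constants $\m^N$.

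For the derivative bound \eqref{derparam}, I would exploit the alternative representation $\G^{(s,v)}(t,x;T,y)=\gg(H^{(s,v)}(t,T),e^{-(T-t)B}y-x)$, in which $x$ enters only via the second argument. Writing $\z:=e^{-(T-t)B}y-x$ and iterating the identity $\p_{x_i}\gg(H,\z)=(H^{-1}\z)_i\,\gg(H,\z)$ together with the product rule, one finds that $\p_x^\n\G^{(s,v)}$ equals a polynomial $P_\n$ in the entries of $(H^{(s,v)})^{-1}$ and the components of $(H^{(s,v)})^{-1}\z$, multiplied by $\G^{(s,v)}$ itself. The constraint $[\n]_B\leq 4$ restricts the indices of differentiation to $\{1,\dots,d+d_1\}$, which is exactly the range covered by Lemma \ref{lemma:stimeH}. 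Bounding each factor in $P_\n$---entries of $(H^{(s,v)})^{-1}$ by negative powers of $(T-t)$ weighted by the block indices, and components of $(H^{(s,v)})^{-1}\z$ by the same plus an extra linear factor in $w=D(\sqrt{T-t})^{-1}(y-e^{(T-t)B}x)$---and then using Lemma \ref{polipar1} to absorb the polynomial growth into the Gaussian (at the price of enlarging $\m$ to $\m+\e$), a careful tally of the exponents yields exactly the scaling $(T-t)^{-[\n]_B/2}$.

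For the increment estimate \eqref{incrementiparam}, I would interpolate by setting $v_\th:=(1-\th)w+\th v$ for $\th\in[0,1]$ and writing
\begin{equation}
\p_x^\n\G^{(s,v)}(t,x;T,y)-\p_x^\n\G^{(s,w)}(t,x;T,y)=\int_0^1 \p_\th\bigl[\p_x^\n\G^{(s,v_\th)}(t,x;T,y)\bigr]\,d\th.
\end{equation}
The dependence on $v$ enters only through $A^{(s,v)}(\t)=A_0(\t,e^{(\t-s)B}v)$ inside $\C^{(s,v)}(t,T)$. Since $a_{ij}\in L^\infty([0,\TT];C^\a_B(\rn))$ and the flow $e^{\t B}$ distorts the anisotropic norm only by a uniform multiplicative constant on $[0,\TT]$, one has $\|A^{(s,v)}(\t)-A^{(s,w)}(\t)\|\leq C|v-w|_B^\a$ uniformly in $\t$. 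Differentiating in $\th$ produces the same polynomial-times-Gaussian structure, augmented by an extra matrix factor $\p_\th(\C^{(s,v_\th)})^{-1}$ which, by the identity $\p_\th M^{-1}=-M^{-1}(\p_\th M)M^{-1}$, carries exactly the $|v-w|_B^\a$ increment. Re-applying Lemmas \ref{lemma:stimeH} and \ref{polipar1} and using the two-sided bound \eqref{eq:upper_gaussian} pointwise in $\th$ then gives \eqref{incrementiparam}.

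The main obstacle I expect is the increment estimate: one must propagate the H\"older increment through \emph{all} the entries of $(H^{(s,v_\th)})^{-1}$ uniformly in $\th$ while respecting the fine anisotropic scaling prescribed by Lemma \ref{lemma:stimeH}. The key input is that $A^{(s,v)}(\t)-A^{(s,w)}(\t)$ has block structure supported in the upper-left $d\times d$ block; combined with the nilpotent structure of $B$ quantified in Lemma \ref{terminiexp}, this ensures that the additional integration against $e^{(T-\t)B_0}$ inside $\p_\th\C^{(s,v_\th)}(t,T)$ does not spoil the exponents produced by Lemma \ref{lemma:stimeH}, so the polynomial-times-Gaussian machinery transfers from the derivative bound to the increment bound without loss.
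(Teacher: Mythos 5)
Your treatment of \eqref{eq:upper_gaussian} and \eqref{derparam} is sound and is indeed the route the paper points to: the two-sided bound falls out of the Loewner ordering \eqref{confronto1} applied to both the determinant and the quadratic form, and the derivative bounds come from the representation via $H^{(s,v)}(t,T)$, the block estimates of Lemma \ref{lemma:stimeH}, and the polynomial-absorption Lemma \ref{polipar1}; that is exactly the combination the paper cites. The problems are in your argument for \eqref{incrementiparam}.

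First, the interpolation $v_\theta=(1-\theta)w+\theta v$ followed by $\p_\theta$ is not available here: the map $\theta\mapsto A^{(s,v_\theta)}(\tau)=A_0\bigl(\tau,e^{(\tau-s)B}v_\theta\bigr)$ is only $\alpha$-H\"older, not differentiable, because $a_{ij}$ is only in $L^\infty([0,\TT];C^\alpha_B(\R^N))$. Your formula $\p_\theta(\C^{(s,v_\theta)})^{-1}=-(\C^{(s,v_\theta)})^{-1}(\p_\theta\C^{(s,v_\theta)})(\C^{(s,v_\theta)})^{-1}$ therefore does not make sense. The interpolation idea can be rescued, but at the level of the covariance matrix: set $\C_\theta:=(1-\theta)\C^{(s,w)}(t,T)+\theta\C^{(s,v)}(t,T)$, which is positive definite for every $\theta$ and has constant $\theta$-derivative $\C^{(s,v)}-\C^{(s,w)}$, and differentiate the smooth map $\theta\mapsto\p_x^\n\gg(\C_\theta,\cdot)$. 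The H\"older increment then enters only through $\|\C^{(s,v)}(t,T)-\C^{(s,w)}(t,T)\|$, and the polynomial-times-Gaussian machinery goes through.

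Second, and more seriously, the claim that ``the flow $e^{\t B}$ distorts the anisotropic norm only by a uniform multiplicative constant on $[0,\TT]$'' is false, and it is exactly the point where your bound $\|A^{(s,v)}(\tau)-A^{(s,w)}(\tau)\|\leq C|v-w|_B^\alpha$ breaks down. Already in the Langevin model ($N=2$, $d=1$, $B=\bigl(\begin{smallmatrix}0&0\\1&0\end{smallmatrix}\bigr)$), take $z=v-w=(z_1,0)$ with $|z_1|$ small; then $e^{\t B}z=(z_1,\t z_1)$ and
\begin{equation}
|e^{\t B}z|_B = |z_1| + |\t|^{1/3}|z_1|^{1/3}, \qquad |z|_B=|z_1|,
\end{equation}
so $|e^{\t B}z|_B/|z|_B\sim|\t|^{1/3}|z_1|^{-2/3}\to\infty$ as $z_1\to0$. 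Consequently
\begin{equation}
\bigl|a_{ij}\bigl(\tau,e^{(\tau-s)B}v\bigr)-a_{ij}\bigl(\tau,e^{(\tau-s)B}w\bigr)\bigr|\leq C\,|e^{(\tau-s)B}(v-w)|_B^{\alpha}
\end{equation}
is genuinely larger than $C|v-w|_B^\alpha$ in the regime $|v-w|_B\lesssim(T-t)^{1/2}$. This is precisely the delicate feature introduced by freezing the coefficients along the integral curve of $Y$ rather than at a fixed point (the setting in \cite{difpas}), and it is where the ``lengthy but straightforward computations'' the paper alludes to actually earn their keep: one must track how the $(T-\tau)$-powers from $e^{(T-\tau)B}$ inside $\C^{(s,v)}(t,T)$ and the anisotropic Gaussian decay compensate for the worse exponent produced by the flow, rather than invoke a nonexistent equivalence of $|\cdot|_B$ under $e^{\t B}$. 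Your present argument would also silently fail for the analogous increment appearing in the proof of Proposition \ref{proposition_serie}, so this is not a peripheral gap.
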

\section{Potential estimates}\label{appendix:potentialestimates}
{We study $\Phi=\Phi(t,x;T,y)$ in \eqref{eq:def_Phi} and its derivatives w.r.t. to the variables
$x_1,\dots,x_d$. To do so, we have to deal with some singular integrals. We follow the steps in
\cite[Section 5]{difpas}, but we remark that the estimates of Proposition \ref{lem:stime_pot}
extend the ones in the aforementioned paper to higher order derivatives. This is needed to prove
the optimal regularity of $\Phi(t,x;T,y)$ and thereafter of $\sol(t,x;T,y)$.}

We set
\begin{equation}\label{eq:def_J}
J(t,x;\t;T,y):=\int_{\R^N} \param(t,x;\t,\y)\phi(\t,\y;T,y) d\y, \qquad
(T,y)\in\mathcal{S}_{\TT},\ (t,x)\in \mathcal{S}_T, \ \t\in]t,T[.
\end{equation}
\begin{proposition}\label{lem:stime_pot}
For every $\e>0$, $\n\in\N^N_0$  with $[\n]_B\le 4$ and $0<\d<\a$, there exists a positive
constant $C$, only dependent on $N, \TT, \m, B, \d,\a$ and $\e$, such that,
\begin{align}
\left|J(t,x;\t;T,y)\right| &\leq \frac{C}{(T-\t)^{1-\frac{\a}{2}}}\G^{\m+\e}(t,x;T,y)
\label{eq:pot_est}\\
\left|\p_x^\n J(t,x;\t;T,y)\right| &\leq
\frac{C}{(T-\t)^{1-\frac{\d}{2}}(\t-t)^{\frac{[\n]_B-(\a-\d)}{2}}}\G^{\m+\e}(t,x;T,y), \label{derpot}
\end{align}
for every $(T,y)\in \mathcal{S}_{\TT}$, $(t,x)\in \mathcal{S}_T$ and  $\tau\in]t,T[$.
\end{proposition}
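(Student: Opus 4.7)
The estimate \eqref{eq:pot_est} is the easier of the two and I would handle it first by brute force. Since $\param(t,x;\tau,\eta)=\G^{(\tau,\eta)}(t,x;\tau,\eta)$, the upper Gaussian bound \eqref{eq:upper_gaussian} gives $\param(t,x;\tau,\eta)\le \m^{N}\G^{\m}(t,x;\tau,\eta)$, which I will then dominate by $\G^{\m+\e}(t,x;\tau,\eta)$. Coupling this with the bound \eqref{eq:stimephi} on $\phi$ and the Chapman--Kolmogorov identity for the Gaussian kernels,
\begin{equation}
\int_{\R^{N}}\G^{\m+\e}(t,x;\tau,\eta)\G^{\m+\e}(\tau,\eta;T,y)\,d\eta=\G^{\m+\e}(t,x;T,y),
\end{equation}
yields \eqref{eq:pot_est} immediately.

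For \eqref{derpot}, I would differentiate under the integral and then exploit the H\"older regularity of $\phi$ to trade singularity in $(\tau-t)$ against singularity in $(T-\tau)$. Setting $\eta_{x}:=e^{(\tau-t)B}x$, I split
\begin{equation}
\p_{x}^{\n}J(t,x;\tau;T,y)=\underbrace{\int_{\R^{N}}\p_{x}^{\n}\param(t,x;\tau,\eta)\,\bigl[\phi(\tau,\eta;T,y)-\phi(\tau,\eta_{x};T,y)\bigr]\,d\eta}_{=:I_{1}}+\phi(\tau,\eta_{x};T,y)\underbrace{\int_{\R^{N}}\p_{x}^{\n}\param(t,x;\tau,\eta)\,d\eta}_{=:I_{2}}.
\end{equation}
For $I_{1}$, I apply \eqref{eq:stimephihold} with exponent $\alpha-\delta$, the derivative estimate \eqref{derparam}, and the crucial polynomial-absorption identity from Lemma~\ref{polipar1}: since $|\cdot|_{B}$ is homogeneous of degree one under the dilations $D(\sqrt{\tau-t})$, the factor $|\eta-\eta_{x}|_{B}^{\alpha-\delta}$ is absorbed by the Gaussian at the cost of $(\tau-t)^{(\alpha-\delta)/2}$. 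After Chapman--Kolmogorov (also comparing $\G^{\m+\e}(\tau,\eta_{x};T,y)$ with $\G^{\m+\e'}(t,x;T,y)$ via a translation along the integral curve of $Y$), $I_{1}$ satisfies the bound \eqref{derpot}.

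The more delicate term is $I_{2}$, and this is the step I expect to be the real obstacle. The key observation is that for any \emph{fixed} $v_{0}\in\R^{N}$ the kernel $\G^{(\tau,v_{0})}(t,\cdot;\tau,\cdot)$ is, in the integration variable $\eta$, a genuine Gaussian probability density whose total mass is independent of $x$, so $\int_{\R^{N}}\p_{x}^{\n}\G^{(\tau,v_{0})}(t,x;\tau,\eta)\,d\eta=0$ for every multi-index $\n\ne 0$. Choosing $v_{0}=\eta_{x}$ (fixed once $x$ is fixed), I rewrite
\begin{equation}
I_{2}=\int_{\R^{N}}\bigl[\p_{x}^{\n}\G^{(\tau,\eta)}(t,x;\tau,\eta)-\p_{x}^{\n}\G^{(\tau,\eta_{x})}(t,x;\tau,\eta)\bigr]\,d\eta
\end{equation}
and control the integrand by the increment estimate \eqref{incrementiparam}, which produces $|\eta-\eta_{x}|_{B}^{\alpha}\,(\tau-t)^{-[\n]_{B}/2}\,\G^{\m+\e}(t,x;\tau,\eta)$. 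Absorbing the polynomial factor via Lemma~\ref{polipar1} once more yields $|I_{2}|\le C(\tau-t)^{-([\n]_{B}-\alpha)/2}$. Combining with the bound $|\phi(\tau,\eta_{x};T,y)|\le C(T-\tau)^{-(1-\a/2)}\G^{\m+\e}(\tau,\eta_{x};T,y)$ from \eqref{eq:stimephi} and comparing Gaussians in the same manner as for $I_{1}$, I obtain the required estimate for $I_{2}$.

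The bookkeeping of the exponents $\alpha$ and $\delta$ and, above all, the careful comparison $\G^{\m+\e}(\tau,\eta_{x};T,y)\lesssim\G^{\m+\e'}(t,x;T,y)$ with a slightly enlarged constant $\e'>\e$ is the technically demanding step; this comparison relies on the fact that $e^{(T-\tau)B}\eta_{x}=e^{(T-t)B}x$, so the two Gaussians share the same centre and their ratio is essentially a determinant ratio $\det\mathcal{C}(T-t)/\det\mathcal{C}(T-\tau)$ that can be absorbed for appropriate choices of $\e'$. The remaining differentiation-under-the-integral and Fubini manipulations are entirely analogous to those already carried out in Section~\ref{sec:parametrix} and in \cite[Section 5]{difpas}.
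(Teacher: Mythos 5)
Your overall strategy for \eqref{eq:pot_est} and your decomposition of $\p_x^{\n}J$ are essentially the same as the paper's: the paper also writes $\p_{x_ix_j}J=K_1+K_2+K_3$ where $K_1$ is your $I_1$, $K_2+K_3$ is your $\phi(\t,\eta_x)I_2$ rewritten by subtracting the vanishing integral $\int\p^\n_x\G^{(\t,\eta_x)}(t,x;\t,\eta)\,d\eta=0$, and the estimates are obtained from \eqref{eq:stimephihold}, \eqref{incrementiparam}, \eqref{polipar} and Chapman--Kolmogorov exactly as you indicate. So the architecture is right. But there is a genuine gap in your final step.

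You claim that the determinant ratio in $\G^{\m+\e}(\t,\eta_x;T,y)\lesssim\G^{\m+\e'}(t,x;T,y)$ ``can be absorbed for appropriate choices of $\e'$.'' This is false. The two Gaussians share the centre $y-e^{(T-t)B}x$ (your observation $e^{(T-\t)B}\eta_x=e^{(T-t)B}x$ is correct), and enlarging $\e$ does take care of the exponential factor, since $\mathcal{C}(T-\t)\le\mathcal{C}(T-t)$. But the normalising prefactor produces a multiplicative factor of order
\begin{equation}
\sqrt{\frac{\det\mathcal{C}(T-t)}{\det\mathcal{C}(T-\t)}}\asymp\left(\frac{T-t}{T-\t}\right)^{Q/2},
\end{equation}
which blows up as $\t\to T$. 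No choice of $\e'$ absorbs a time ratio; the $\e$-trick in Lemma~\ref{polipar1} absorbs polynomial factors in the rescaled \emph{spatial} variable, not ratios of covariance determinants. Hence your estimate of the term multiplying $\G^{\m+\e}(\t,\eta_x;T,y)$ (both inside $I_1$ from \eqref{eq:stimephihold}, and in front of $I_2$) does not close for $\t$ near $T$.

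The paper's fix, which your proposal is missing, is a dichotomy in $\t$ around the midpoint $\tt:=\frac{t+T}{2}$. For $t<\t\le\tt$ one has $T-\t\ge\frac{T-t}{2}$, so the determinant ratio above is bounded by $2^{Q/2}$ and the comparison $\G^{\m+\e}(\t,\eta_x;T,y)\le C\,\G^{\m+\e'}(t,x;T,y)$ is legitimate; only in this range is the decomposition applied. For $\tt<\t<T$ no splitting is needed at all: one simply applies \eqref{derparam}, \eqref{eq:stimephi} and Chapman--Kolmogorov to get $|\p^\n_xJ|\le C(T-\t)^{-(1-\a/2)}(\t-t)^{-[\n]_B/2}\G^{\m+\e}(t,x;T,y)$, and then uses $T-\t\le\t-t$ (valid precisely because $\t>\tt$) to shift $\frac{\a-\d}{2}$ powers of the singularity from the $(T-\t)$ factor to the $(\t-t)$ factor, landing on the claimed bound. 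Without this case distinction the argument does not go through.
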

\begin{proof}
The proof relies on Proposition \ref{proposition_serie}: \eqref{eq:pot_est} can be easily obtained
by applying estimate \eqref{eq:upper_gaussian} to $\param(t,x;\t,\y)$, estimate
\eqref{eq:stimephi} to $\phi(\t,\y;T,y)$ and the Chapman-Kolmogorov identity.

We provide a full proof of \eqref{derpot} in the case of $\p_x^\n=\p_{x_i x_j}$, with $i,j\leq d$,
the proof for higher order derivatives being analogous. The idea is to combine
\eqref{eq:stimephihold} with the techniques in \cite[Proposition 5.3]{difpas} and
\cite[Proposition 3.2]{pol}. Let $(t,x)\in \mathcal{S}_T$ and  $\tau\in]t,T[$ be fixed. 
By estimates \eqref{derparam} and \eqref{eq:stimephi},
we have
\begin{equation}
\p_{x_i x_j}J(t,x;\t;T,y)=\int_{\R^N} \p_{x_i x_j}\param(t,x;\t,\y)\phi(\t,\y;T,y) d\y.
\end{equation}
We set $\tt=\frac{t+T}{2}$ 
and consider two separate cases:\\
\vspace{2pt}\emph{Case $\tt<\t< T$.} By \eqref{derparam} and \eqref{eq:stimephi}, we have that 
for every $\e>0$ and $0<\d<\a$ there exists a positive constant $C$ such that
\begin{align*}
|\p_{x_i x_j}J(t,x;\t;T,y)|
&\leq\int_{\R^N}\frac{C}{(T-\t)^{1-\frac{\a}{2}}(\t-t)}\G^{\m+\e}(t,x;\t,\y)\G^{\m+\e}(\t,\y;T,y)d\y
\intertext{(by the Chapman-Kolmogorov equation)}
&\leq\frac{C}{(T-\t)^{1-\frac{\a}{2}}(\t-t)}\G^{\m+\e}(t,x;T,y) \intertext{(since $T-\t<\t-t$)}
&\leq\frac{C}{(T-\t)^{1-\frac{\d}{2}}(\t-t)^{1-\frac{\a-\d}{2}}}\G^{\m+\e}(t,x;T,y).
\end{align*}
\vspace{2pt}\emph{Case $t<\t\le \tt$.}
Here we need to handle with care the singularity of $\p_{x_i x_j}\param(t,x;\t,\y)$ for small
$\t-t$. {Note that in this case the following inequalities hold true:
\begin{equation}\label{eq:inequalities_chain}
 \tau-t \leq\frac{T-t}{2}\leq T-\tau < T-t.
\end{equation}}
We have
\begin{equation}
\p_{x_i x_j}J(t,x;\t;T,y)=K_1
+K_2
+K_3
,
\end{equation}
where, setting $\xx=e^{(\t-t)B}x$,
\begin{align}
K_1
&:=\int_{\R^N} \p_{x_i x_j}\G^{(\t,\y)}(t,x;\t,\y)\big(\phi(\t,\y;T,y)-\phi(\t,\xx;T,y)\big) d\y,\\
K_2
&:= \phi(\t,\xx;T,y) \int_{\R^N} \(\p_{x_i x_j}\G^{(\t,\y)}(t,x;\t,\y)-\p_{x_i
x_j}\G^{(\t,v)}(t,x;\t,\y)\big|_{v=\xx}\)d\y ,\\
K_3
&:= \phi(\t,\x;T,y) \int_{\R^N} \p_{x_i x_j}\G^{(\t,v)}(t,x;\t,\y)\big|_{v=\xx}d\y.
\end{align}
We first consider $K_1
$. By \eqref{eq:stimephihold} 
and \eqref{derparam}, 
for every $\e>0$ and $0<\d<\a$ there exists a positive constant $C$ such that
\begin{align}
|K_1
|
 &\leq \frac{C}{(T-\t)^{1-\frac{\d}{2}}} \int_{\R^N}\frac{|\y-\xx|^{\a-\d}_B}{(\t-t)}\G^{\m+\frac{\e}{2}}(t,x;\t,\y)
 \(\G^{\m+\e}(\t,\xx;T,y)+\G^{\m+\e}(\t,\y;T,y)\)d\y
\intertext{(by {\eqref{polipar}})} &\leq \frac{C}{(T-\t)^{1-\frac{\d}{2}}}
\int_{\R^N}\frac{1}{(\t-t)^{1-\frac{\a-\d}{2}}}\G^{\m+\e}(t,x;\t,\y)
\(\G^{\m+\e}(\t,\xx;T,y)+\G^{\m+\e}(\t,\y;T,y)\)d\y
 \intertext{{(integrating in $\y$ and by the Chapman-Kolmogorov identity)}} &\leq
\frac{C}{(T-\t)^{1-\frac{\d}{2}}(\t-t)^{1-\frac{\a-\d}{2}}}
\(\G^{\m+\e}(\t,\xx;T,y)+\G^{\m+\e}(t,x;T,y)\)
\intertext{{(by \eqref{eq:inequalities_chain})}} & \leq
\frac{C}{(T-\t)^{1-\frac{\d}{2}}(\t-t)^{1-\frac{\a-\d}{2}}} \G^{\m+\e}(t,x;T,y).
\end{align}
Consider now $K_2$. By {\eqref{eq:stimephi}} and \eqref{incrementiparam}, 
we obtain
\begin{align*}
|K_2
| &\leq C{\frac{\G^{\m+\e}(\t,\xx;T,y)}{(T-\t)^{1-\frac{\a}{2}}}}
\int_{\R^N}\frac{|\y-\xx|^{\a}_B}{\t-t}\G^{\m+\e}(t,x;\t,\y)d\y\\ \intertext{(by {\eqref{polipar}
and integrating in $\eta$})} &\leq
\frac{C}{(T-\t)^{1-\frac{\a}{2}}(\t-t)^{1-\frac{\a}{2}}} \G^{\m+\e}(\t,\xx;T,y)
\intertext{{(again by \eqref{eq:inequalities_chain})}}
&{\leq \frac{C}{(T-\t)^{1-\frac{\d}{2}}(\t-t)^{1-\frac{\a-\d}{2}}} }\G^{\m+\e}(t,x;T,y).
\end{align*}
Finally, $K_{3}=0$ since 
  $$\int_{\R^N} \p_{x_i x_j}\G^{(\t,v)}(t,x;\t,\y)d\y=\p_{x_i x_j}\int_{\R^N}\G^{(\t,v)}(t,x;\t,\y)d\y=0$$
for any $v\in\rn$.
\end{proof}

\bibliographystyle{siam}
\bibliography{BibTeX-Final}

\end{document}